\renewcommand {\a}{ \alpha }
\renewcommand{\b}{\beta}
\newcommand{\g}{\gamma}
\newcommand{\G}{\Gamma}
\newcommand{\vark}{\varkappa}
\renewcommand{\d}{\delta}
\renewcommand{\L}{\Lambda}
\newcommand{\z}{\zeta}
\newcommand{\p}{\partial}
\newcommand{\Om}{\Omega}
\newcommand{\oq}{\ {\raise 7pt\hbox{${\scriptstyle\circ}$}}
\kern -7pt{
\hbox{$Q$}}}
\newcommand{\R}{ \mathbb R}
\newcommand {\GS}{\mathfrak S}
\newcommand {\BI}{\mathbf I}
\newcommand {\BS}{\mathbf S}
\newcommand {\BT}{\mathbf T}
\newcommand {\bx}{\mathbf x}
\newcommand {\bk}{\mathbf k}
\newcommand {\bl}{\mathbf l}
\newcommand {\bw}{\mathbf w}
\newcommand {\bz}{\mathbf z}
\newcommand {\by}{\mathbf y}
\newcommand {\bs}{\mathbf s}
\newcommand {\bu}{\mathbf u}
\newcommand {\bn}{\mathbf n}
\newcommand {\bj}{\mathbf j}
\newcommand{\bzero}{\mathbf 0}
\newcommand{\SN}{{\sf{N}}}
\newcommand {\bmu}{\boldsymbol\mu}
\newcommand {\boldeta}{\boldsymbol\eta}
\newcommand {\bxi}{\boldsymbol\xi}
\newcommand{\lu}{\langle}
\newcommand{\ru}{\rangle}
\newcommand{\CP}{\mathcal P}
\newcommand{\CC}{\mathcal C}
\newcommand{\plainC}[1]{\textup{{\textsf{C}}}^{#1}}
\newcommand{\plainL}[1]{\textup{{\textsf{L}}}^{#1}}
\newcommand{\plainl}[1]{\textup{{\textsf{l}}}^{#1}}
\newcommand{\1}
{{\,\vrule depth3pt height9pt}{\vrule depth3pt height9pt}
{\vrule depth3pt height9pt}{\vrule depth3pt height9pt}\,}
\DeclareMathOperator {\dist} {{dist}}
\DeclareMathOperator{\op}{{Op}}
\DeclareMathOperator{\supp}{{supp}}
\newtheorem{thm}{Theorem}[section]
\newtheorem{cor}[thm]{Corollary}
\newtheorem{lem}[thm]{Lemma}
\newtheorem{prop}[thm]{Proposition}
\theoremstyle{definition}
\newtheorem{defn}[thm]{Definition}
\newtheorem{rem}[thm]{Remark}
\numberwithin{equation}{section}
\newcommand{\bee}{\begin{equation}}
\newcommand{\ene}{\end{equation}}
\newcommand{\bees}{\begin{equation*}}
\newcommand{\enes}{\end{equation*}}
\newcommand{\bes}{\begin{split}}
\newcommand{\ens}{\end{split}}
\newcommand{\bet}{\begin{thm}}
\newcommand{\ent}{\end{thm}}
\newcommand{\bel}{\begin{lem}}
\newcommand{\enl}{\end{lem}}
\newcommand{\bec}{\begin{cor}}
\newcommand{\enc}{\end{cor}}
\newcommand{\bep}{\begin{proof}}
\newcommand{\enp}{\end{proof}}
\newcommand{\ber}{\begin{rem}}
\newcommand{\enr}{\end{rem}}
\newcommand{\Z}{\mathbb Z}
\begin{document}
\hoffset -4pc

\title
[Schatten-von Neumann properties ]
{{On the Schatten-von Neumann  properties of some pseudo-differential operators}}
\author[A.V. Sobolev]{Alexander V. Sobolev}
 \address{Department of Mathematics\\ 
 University College London\\
Gower Street\\ London\\ WC1E 6BT UK}
\email{asobolev@math.ucl.ac.uk}
\keywords{ Pseudo-differential operators, Schatten-von Neumann classes}
\subjclass[2010]{Primary 47G30; Secondary 35S05, 47B10, 47B35}

\begin{abstract}
 We obtain a number of explicit estimates for quasi-norms of 
 pseudo-differential operators in the Schatten-von Neumann classes 
 $\GS_q$ with $0<q\le 1$. 
The estimates are applied to derive semi-classical bounds for operators 
with smooth or non-smooth symbols.    
\end{abstract}

\maketitle

\section{Introduction}

When working with compact pseudo-differential operators 
it is often important to know how fast their singular values (or eigenvalues) decay. 
These properties 
are  conveniently stated in terms of the classical Schatten-von Neumann 
classes  $\GS_p, p >0$, or even more general ideals  
$\GS_{p, q}$, $p, q >0$. 
We refer to \cite{BS_U}, 
\cite{BS}, \cite{GK} and \cite{Simon} for information on compact operator ideals.

Not surprisingly, the Schatten-von Neumann properties 
of pseudo-differential operators 
are determined by smoothness of their symbols. 
The first bound in the trace class $\GS_1$ was obtained  
in  \cite{ShT}, and later reproduced in \cite{Sh}, Proposition 27.3, 
and \cite{Robert}, Theorem II-49, see also \cite{H0}. 
Some useful $\GS_1$-bounds were obtained in the 
much more recent paper \cite{Roz}. 
 The other ideals 
$\GS_p$, $\GS_{p, q}$ were studied e.g. in 
\cite{Arsu}, \cite{BuNi}, \cite{HRT},  \cite{Rond}, 
\cite{Toft}, and there one can find further references. 
The fundamental paper \cite{BS_U}
contains $\GS_{p, q}$-estimates 
for integral operators in terms of
smoothness of their kernels.

In spite of a relatively large number of available results, they 
are not always practically useful since in applications 
one often  needs more detailed information. In this paper 
we obtain some explicit bounds for 
Schatten-von Neumann norms of 
various pseudo-differential operators aiming at 
applications in semi-classical analysis. 
Let  $p = p(\bx, \by, \bxi)$, $\bx, \by, \bxi\in\R^d$, $d\ge 1$, 
be a smooth \textit{amplitude}. 
For any $\a >0$ introduce the standard notation 
for the pseudo-differential operator with amplitude $p$:
\begin{equation}\label{ampl1:eq}
(\op^{\rm a}_\a (p)) u(\bx) = \biggl(\frac{ \a}{2\pi}\biggr)^d
\iint e^{i\a(\bx-\by)\bxi} p(\bx, \by, \bxi) u(\by) d\by d\bxi,
\end{equation}
for any Schwartz class function $u$. In the literature one uses more often 
the reciprocal value $\a^{-1}$ which is 
interpreted as the Planck's constant. 
It is natural for us to study a somewhat more general variant of the
 operator \eqref{ampl1:eq}. 
Let $\BT = \{t_{jk}\}$ be a non-degenerate $(2\times 2)$-matrix with real-valued 
entries. We concentrate on the operators 
\begin{equation}\label{opt:eq}
\begin{cases}
\op^{\rm a}_\a(p_\BT),\ p_{\BT}(\bx, \by, \bxi) 
= p( \bw, \bz, \bxi),\\[0.2cm] 
\textup{with}\ 
\  \bw = t_{11}\bx +t_{12}\by, 
\bz = t_{21}\bx + t_{22} \by.
\end{cases}
\end{equation}
This choice of the amplitude allows us to derive bounds for 
various standard quantizations of pseudo-differential operators. For a smooth 
\textit{symbol}  $a = a(\bx, \bxi)$ and a number $t\in [0, 1]$ 
we define the $t$-quantization as the pseudo-differential operator
\begin{equation}\label{tquant:eq}
\bigl(\op_{\a, t} (a) u\bigr)(\bx) 
= \biggl(\frac{\a}{2\pi}\biggr)^d \iint e^{i\a(\bx-\by)\cdot\bxi}
a\bigl((1-t)\bx+ t\by, \bxi\bigr) u(\by) d\by d\bxi,
\end{equation}
for any Schwartz class function $u$, see e.g. \cite{Robert}, Ch. 2, \S 4. 
It is clear that this operator can be written as 
\begin{equation}\label{tquant1:eq}
\op_{\a, t}(a) = \op^{\rm a}_{\a}(p_\BT),\ \ \textup{with} \ \ 
p(\bw, \bz, \bxi) = a(\bw, \bxi),\ \ 
\BT = 
\begin{pmatrix}
1-t & t\\
-1 & 1
\end{pmatrix}.  
\end{equation}
In this formula the choice of the second row in the matrix $\BT$ is 
unimportant as long as $\BT$ remains non-degenerate. 
Note also that formally 
$\bigl(\op_{\a, t}(a)\bigr)^* = \op_{\a, 1-t}(\overline a)$. 
The values $t = 0$ and $t=1$ give the standard 
``left" and ``right" quantizations. 
In these cases the operator \eqref{tquant:eq} 
has the symbol  $a(\bx, \bxi)$ (for $t=0$) 
or $a(\by, \bxi)$ (for $t = 1$).  
In the literature one sometimes uses for 
them the notation $\op^{\rm l}_\a(a)$ 
and $\op^{\rm r}_\a(a)$ respectively. 
Another important example is the Weyl quantization:
\begin{equation*}
\op^{\rm W}_\a(a) = \op_{\a, \frac{1}{2}}(a),
\end{equation*} 
which has the advantage that $\bx$ and $\by$ enter the 
definition \eqref{tquant:eq} symmetrically. If the symbol $a$ depends only on $\bxi$ 
then the operators \eqref{tquant:eq} for different values of $t$ 
coincide with each other and we write simply $\op_\a(a)$.

If the functions $p$ and $a$ above are sufficiently smooth and decay 
sufficiently fast at infinity then the operators \eqref{opt:eq},\eqref{tquant:eq} 
belong to $\GS_q$ with a suitable $q>0$.  
The aim of the paper is to study this property
for $q\in (0, 1]$.   
Our results are divided in three groups. 
First in Section \ref{general:sect} 
we obtain general estimates in $\GS_q$ for $\a = 1$, 
see Theorems \ref{full:thm}, \ref{symbol:thm}. 
The $\GS_q$-bounds for the operators \eqref{opt:eq} seem to be 
quite useful from the practical point of view. 
In particular 
they allow us to study the operators of the form 
$h_1 \op_{1, t} (a) h_2$, 
$t\in [0,1]$ with the weights $h_1, h_2$ whose supports are disjoint, 
and to control explicitly the dependence on the distance between the supports, 
see Theorem  \ref{symbol:thm}(2).  
Our approach stems from a simple idea suggested in the paper \cite{Roz} where 
trace class properties of pseudo-differential operators were studied. 
In fact, our results can be viewed 
as quantitative variants of 
Proposition 3.2 and Theorem 3.5. 
from \cite{Roz}, extended to the ideals $\GS_q$, $q \le 1$. 
As the classes 
$\GS_q$ with $q < 1$ are not normed, the obtained  
$\GS_q$-estimates for the operators \eqref{opt:eq} and \eqref{tquant:eq}
involve the so-called \textit{lattice quasi-norms}(see \eqref{brackh:eq}) 
for the amplitudes/symbols and their derivatives (for $q = 1$ 
these quasi-norms are simply $\plainL1$-integral norms). 
The estimates in $\GS_q$ with $q > 1$  
are also of great interest, but they are likely to be 
stated in different terms, cf. \cite{Arsu}, \cite{BuNi}, \cite{Toft}, 
and thus they are not discussed here. 

Sections \ref{semi_class:sect} and \ref{nonsmooth:sect} are devoted to 
applications. 
In Section \ref{semi_class:sect} we use Theorems \ref{full:thm} and \ref{symbol:thm} 
to derive 
estimates for large values of the parameter 
$\a$, which can be interpreted as the semi-classical regime. 
These results are stated in terms of the scaling properties 
of the symbols which makes them flexible and convenient for applications. 
Section \ref{nonsmooth:sect} is concerned with semi-classical 
bounds for operators with discontinuous symbols. 
The discontinuities are introduced as characteristic functions $\chi_\L(\bx)$ 
and $\chi_\Om(\bxi)$ of some Lipschitz 
domains $\L$ and $\Om$. We derive $\GS_q$-semi-classical estimates
for the Hankel-type operators $\chi_\L \op_{\a, t}(a)(I-\chi_\L)$ and 
$\chi_\L P_{\Om, \a}(I-\chi_{\L})$, $P_{\Om, \a} = \op_\a(\chi_\Om)$. 
This study is motivated by the trace asymptotics 
for Wiener-Hopf and Hankel operators 
with discontinuous symbols, both 
classical, see e.g.\cite{Land_Wid}, 
\cite{Widom1},  and multi-dimensional, see \cite{Sob}, \cite{Sob_LMS}.  

A number of 
estimates similar to the ones in Sections \ref{semi_class:sect} and 
\ref{nonsmooth:sect} have been established in \cite{Sob} 
for the trace class $\GS_1$. However some applications 
in Mathematical Physics, and in particular in Quantum Information Theory, 
call for estimates 
in the classes of compact operators with a faster decay of the singular values, 
see \cite{GiKl}, \cite{HLS}. This was the main incentive for the current paper.

To conclude the Introduction we make some notational conventions. 
Throughout the paper we denote by $C$ or $c$ with or without indices 
various positive constant whose value is unimportant. 
The notation $B(\bu, r)$ is used for the open ball in $\R^d, d\ge 1,$ 
of radius $r>0$ centred at the point $\bu\in\R^d$. The characteristic function 
of the ball $B(\bu, r)$ is denoted by $\chi_{\bu, r}$.

\textbf{Acknowledgements.} The author is grateful 
to H. Leschke and W. Spitzer for introducing him to problems 
in Quantum Information Theory involving pseudo-differential operators 
with discontinuous symbols, and for useful remarks on the paper. 
This work was supported by EPSRC grant EP/J016829/1.

\section{General estimates in $\GS_q$-ideals with $q\in (0, 1]$: smooth symbols}
\label{general:sect}

\subsection{Ideals $\GS_q$} 
The notation $\GS_q, q>0,$
is standard for the set of all 
compact operators $A$  on a Hilbert space with singular values $s_k(A)$, 
$k=1, 2, \dots$,  
for which the functional 
\begin{equation*}
\|A\|_{\GS_q} = \biggl(\sum_{k=1}^\infty s_k(A)^q\biggr)^{\frac{1}{q}}
\end{equation*}
is finite. For $q\ge 1$ this functional defines a natural norm on $\GS_q$, 
whereas  for $q <1$ it defines a quasi-norm. Nevertheless one has the triangle 
inequality of the form 
\begin{equation}\label{triangle:eq}
\|A_1+A_2\|_{\GS_q}^q \le \|A_1\|_{\GS_q}^q + \|A_2\|_{\GS_q}^q, \ 0<q \le 1,
\end{equation}
see \cite{Rot} and \cite{BS}, p.262, and the following 
H\"older-type inequality:
\begin{equation}\label{holder:eq}
\|A_1 A_2\|_{\GS_q}\le \|A_1\|_{\GS_{q_1}}\|A_2\|_{\GS_{q_2}},\ 
q^{-1} = q_1^{-1} + q_2^{-1}, 0<q_1, q_2 \le \infty,
\end{equation}
see \cite{BS}, p. 262. 

A crucial technical point in the study of the operators \eqref{opt:eq} 
 is to estimate suitable $\GS_q$-(quasi)- norms for the operators 
 $h \op_1(a)$, $h = ~h(\bx)$, $a = ~a(\bxi)$,
which have been studied quite extensively. We need the following
estimate  which is a 
slight generalization of the bound found 
in \cite{BS_U}, Theorem 11.1 (see also \cite{BKS}, Section 5.8),
and quoted  in \cite{Simon}, Theorem 4.5 for $s\in [1, 2]$.

Let $\CC_\bu\subset \R^m$ be a cube centred at $\bu\in \R^m$ 
with the edge of unit length. 
For a function $h\in\plainL{r}_{\textup{\tiny loc}}(\R^m)$, $r \in (0, \infty),$ 
denote
\begin{equation}\label{brackh:eq}
\begin{cases}
\1 h\1_{r,\delta} =
\biggr[\sum_{\bn\in\Z^d}
\biggl(\int_{\CC_{\bn}} |h(\bx)|^r d\bx\biggr)^{\frac{\d}{r}}\biggr]^{\frac{1}{\d}},\ \
0<\d<\infty,\\[0.3cm]
\1 h\1_{r, \infty} =
\sup_{\bu\in\R^d}
\biggl(\int_{\CC_{\bu}} |h(\bx)|^r d\bx\biggr)^{\frac{1}{r}},\ \
\d = \infty.
\end{cases}
\end{equation}
These functionals are sometimes called \textit{lattice  
quasi-norms} (norms for $r, \d\ge 1$).  
If $\1 h\1_{r, \d}<\infty$ we say that  
$h\in \plainl{\d}(\plainL{r})(\R^m)$.

\begin{prop}\label{BS:prop}
Suppose that $f\in  \plainl{q}(\plainL{2})(\R^n)$ 
and
$g\in  \plainl{q}(\plainL{2})(\R^m)$, 
with some $q \in (0, 2]$. 
Let $K: \plainL2(\R^m) \to \plainL2(\R^n)$ be the operator with the kernel 
\[
f(\bx) e^{i\bx\cdot \BS \by  }g(\by),\ \bx\in\R^n, \by\in \R^m,
\]
where $\BS: \R^m\to\R^n$ is a linear map. 
Then  
\[
\| K\|_{\GS_q} \le C_q \1 f\1_{2, q} \1 g\1_{2, q},
\]
with a constant $C_q = C_q(\BS)$ depending 
only on the number $s_0$ in the bound  
$\max_{jk}|s_{jk}|\le~s_0$ 
for the entries $s_{jk}, j=1, 2, \dots, n$; $k = 1, 2, \dots m,$ of the matrix $\BS$.
\end{prop}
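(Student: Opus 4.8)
The plan is to reduce the general case to the classical result of Birman--Solomyak on integral operators with kernels of the form $f(\bx)\,e^{i\bx\cdot\by}\,g(\by)$ (the case $\BS$ equal to the identity embedding when $m=n$). First I would reduce to the square case $m=n$: if $m<n$ one can view $\BS$ as a map from $\R^n$ to $\R^n$ by padding with zero columns (so that $g$ is extended to a function on $\R^n$ constant, say a fixed Schwartz bump, in the extra variables — but more cleanly, since the kernel does not depend on the extra $\by$-variables, one factors $K$ through a rank-one-in-those-variables projection); symmetrically if $m>n$. In fact the cleanest route is: the operator $K$ with kernel $f(\bx)e^{i\bx\cdot\BS\by}g(\by)$ factors as $K = M_f \circ E \circ M_g$ where $M_f, M_g$ are multiplication operators and $E$ is the (partial) Fourier-integral operator with kernel $e^{i\bx\cdot\BS\by}$; but $E$ itself is unbounded, so this factorization must be regularized — one splits $f$ and $g$ into pieces supported on unit cubes and handles the bilinear sum using the triangle inequality \eqref{triangle:eq}.

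Concretely, I would write $f = \sum_{\bn\in\Z^n} f_\bn$ with $f_\bn = f\,\chi_{\CC_\bn}$ and $g = \sum_{\bl\in\Z^m} g_\bl$ with $g_\bl = g\,\chi_{\CC_\bl}$, so that $K = \sum_{\bn,\bl} K_{\bn,\bl}$ with $K_{\bn,\bl}$ the operator with kernel $f_\bn(\bx)e^{i\bx\cdot\BS\by}g_\bl(\by)$. The key local estimate is a bound of the form
\begin{equation*}
\|K_{\bn,\bl}\|_{\GS_q}\le C_q\,\|f_\bn\|_{\plainL2}\,\|g_\bl\|_{\plainL2}\,\Phi(\bn,\bl),
\end{equation*}
where $\Phi(\bn,\bl)$ decays fast enough in $|\bn - \BS\bl|$ (or the relevant combination) to be summable. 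Here the Birman--Solomyak result, applied on a fixed cube (or obtained directly by expanding $e^{i\bx\cdot\BS\by}$ in a Fourier series on a large cube containing the supports and using that the kernel is smooth), gives exactly the $\GS_q$-bound for the single-cube piece with the $\plainL2$-norms of $f_\bn$ and $g_\bl$ on the right; the constant depends only on $s_0$ because rescaling/translating the cube changes the phase $\bx\cdot\BS\by$ only by linear terms in $\bx$ alone or $\by$ alone, which can be absorbed into $f_\bn$ and $g_\bl$ as unimodular factors, and by a bounded factor governed by $s_0$. Then I would assemble: by \eqref{triangle:eq},
\begin{equation*}
\|K\|_{\GS_q}^q\le \sum_{\bn,\bl}\|K_{\bn,\bl}\|_{\GS_q}^q\le C_q^q\sum_{\bn,\bl}\|f_\bn\|_{\plainL2}^q\,\|g_\bl\|_{\plainL2}^q\,\Phi(\bn,\bl)^q,
\end{equation*}
and, using that $\Phi(\bn,\bl)^q$ is summable in $\bl$ uniformly in $\bn$ (and vice versa) together with the inequality $(\sum a_\bn b_\bl c_{\bn\bl})\le (\sum a_\bn)(\sum b_\bl)\sup c$ when $q\le 1$ — more precisely a Young/Schur-type estimate on the double sum — one arrives at $\|K\|_{\GS_q}^q\le C_q^q\,\1 f\1_{2,q}^q\,\1 g\1_{2,q}^q$, which is the claim.

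The main obstacle I anticipate is the handling of a general rectangular linear map $\BS$ rather than a square invertible one: when $\BS$ is not surjective the phase $e^{i\bx\cdot\BS\by}$ is degenerate in some directions of $\bx$, and when it is not injective it is degenerate in some directions of $\by$, so the naive factorization through a Fourier transform fails and one cannot directly invoke the Birman--Solomyak theorem verbatim. The way around this is precisely the cube-decomposition above combined with the observation that on each pair of unit cubes the kernel is a smooth (indeed real-analytic) function whose derivatives are bounded in terms of $s_0$ alone, so that the local $\GS_q$-bound follows from the general principle that an integral operator with a smooth, compactly supported kernel lies in every $\GS_q$, $q>0$, with quasi-norm controlled by finitely many sup-norms of derivatives of the kernel (a standard consequence of expanding in a Fourier basis on the cube, where the Fourier coefficients decay faster than any power). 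A secondary technical point is to keep track of the decay rate of $\Phi(\bn,\bl)$: because the phase oscillates, an integration-by-parts (non-stationary phase) argument on the off-diagonal cube pairs gives decay $|\bn - \BS\bl|^{-N}$ for any $N$, and choosing $N$ large compared to $\max(n,m)/q$ makes the double sum converge; this is routine but must be done with the constant depending only on $s_0$.
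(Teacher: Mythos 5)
The paper itself does not prove Proposition \ref{BS:prop}; it refers to Birman--Solomyak \cite{BS_U}, Theorem~11.1, whose argument is based on the cube decomposition you describe combined with an \emph{orthogonality} mechanism. Your skeleton is therefore close to the intended one, but two of your claims are incorrect and one of them is a genuine gap.

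First, the claimed decay of $\Phi(\bn,\bl)$ in $|\bn-\BS\bl|$ is false, and the ``non-stationary phase'' mechanism you invoke does not apply here: there is no extra integration variable to integrate by parts in, and in any case integration by parts in $\by$ would require smoothness of the test function $u$, which is only $\plainL2$. A concrete counterexample: take $\BS=I$, $m=n$, $g_\bl$ the characteristic function of the cube $\CC_0$; then $K_{\bn,0}u = f_\bn\cdot\widehat{\chi_{\CC_0}u}$ restricted to $\CC_\bn$, and taking $u(\by)=e^{-i\bn\cdot\by}$ shows the operator norm of $K_{\bn,0}$ stays of order one as $|\bn|\to\infty$. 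Equally, however, the decay is not needed: after translating both cubes to the origin (absorbing the linear phases $\bn\cdot\BS\by$ and $\bx\cdot\BS\bl$ into $g_\bl$ and $f_\bn$ as unimodular factors, as you note) the phase becomes $\bx\cdot\BS\by$ on $\CC_0\times\CC_0$ with derivatives bounded by powers of $s_0$, so the local bound is simply
\begin{equation*}
\|K_{\bn,\bl}\|_{\GS_q}\le C_q(s_0)\,\|f_\bn\|_{\plainL2}\,\|g_\bl\|_{\plainL2}
\end{equation*}
\emph{uniformly} in $(\bn,\bl)$, and the double sum then factors exactly: $\sum_{\bn,\bl}\|f_\bn\|_{\plainL2}^q\|g_\bl\|_{\plainL2}^q=\1 f\1_{2,q}^q\,\1 g\1_{2,q}^q$. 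No Schur/Young step is involved.

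Second, and this is the real gap, your assembly step invokes the $q$-triangle inequality \eqref{triangle:eq}, which is only valid for $q\le 1$, whereas the proposition is stated (and used in Lemma~\ref{q_amplitude1:lem}, where it is applied with exponent $2q\le 2$) for the full range $q\in(0,2]$. For $q\in(1,2]$ one must exploit the orthogonality built into the cube decomposition: for fixed $\bl$ the operators $K_{\bn,\bl}$, $\bn\in\Z^n$, have mutually orthogonal ranges (disjoint supports of $f_\bn$), and for fixed $\bn$ they have mutually orthogonal initial spaces (disjoint supports of $g_\bl$). If $A_j$ have pairwise orthogonal ranges then $\bigl|\sum_j A_j\bigr|^2=\sum_j|A_j|^2$, and Rotfel'd's inequality \cite{Rot} ($\tr\,(\sum B_j)^p\le\sum\tr B_j^p$ for positive compact $B_j$ and $0<p\le1$) applied with $p=q/2$ gives $\|\sum_j A_j\|_{\GS_q}^q\le\sum_j\|A_j\|_{\GS_q}^q$ for all $q\le 2$; the same holds with ``ranges'' replaced by ``initial spaces''. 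Summing first over $\bl$ with $\bn$ fixed and then over $\bn$ yields $\|K\|_{\GS_q}^q\le\sum_{\bn,\bl}\|K_{\bn,\bl}\|_{\GS_q}^q$ for the whole range $q\in(0,2]$, after which the factorization above finishes the proof. Without this orthogonality step your argument only establishes the case $q\le1$.
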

 
We do not give the proof as it repeats that of \cite{BS_U}, Theorem 11.1 
almost word to word. 

\subsection{Estimates for the operators \eqref{opt:eq}} 
Now we need to specify the conditions on the matrix $\BT = \{t_{jk}\}, j,k=1,2$. 
The end results require $\BT$ to be non-degenerate, i.e. 
$\BT \in  GL(2, \R)$. 
For convenience we sometimes assume that 
\begin{equation}\label{T:eq}
t_{11}+t_{12} = 1,
\end{equation}
and denote
\begin{equation}\label{tau:eq}
\tau = t_{21}+t_{22}.
\end{equation}
Using the inverse of $\BT$, we can recover $\bx$ and $\by$ from the 
vectors $\bw$ and $\bz$ defined in \eqref{opt:eq}: 
\begin{equation}\label{xy:eq}
\begin{cases}
(\det\BT) \bx = t_{22}\bw - t_{12}\bz, \ 
(\det\BT)\by = -t_{21} \bw + t_{11}\bz,\\[0.2cm]
\textup{so}\ \  
(\det\BT)(\bx- \by) = \tau \bw - \bz.
\end{cases}
\end{equation}
We assume that 
\begin{equation}\label{t0:eq}
\max_{jk} |t_{jk}|\le t_0,\ \ |\det\BT|\ge \d_0,
\end{equation}
with some fixed positive numbers $t_0$, $\d_0$. 
In the estimates below 
the constants may be dependent on $t_0$ and $\d_0$. We provide 
appropriate comments in every instance. 

Assuming that $p(\ \cdot\ ,\ \cdot\ , \bxi )\in\plainL1(\R^{2d})$,  
introduce the ``double" Fourier transform:
\begin{equation*}
\hat p(\boldeta, \bmu, \bxi)
= \frac{1}{(2\pi)^{ d }}
\iint e^{-i\bw\cdot \boldeta-i\bz\cdot\bmu} p(\bw, \bz, \bxi)
d\bw d\bz.
\end{equation*}

\begin{lem}\label{q_amplitude1:lem} 
Let $\BT$ be an arbitrary $(2\times 2)$-matrix 
with real-valued entries. 
Suppose that
$p(\ \cdot\ , \ \cdot\ , \bxi)\in \plainL1(\R^{2d})$ for a.e. $\bxi\in\R^d$. 
Let $h_1, h_2~\in~\plainl{2q}(\plainL{2})(\R^d)$, and let  
$\hat p\in\plainl{q}(\plainL{1})(\R^{3d})$ with some $q\in (0, 1]$. 
 Then the operator
$h_1 \op^{\rm a}_1(p_\BT) h_2$ belongs to $\GS_q$ and  
\begin{equation}\label{trace_amplitude1:eq}
\| h_1 \op^{\rm a}_1(p_\BT) h_2\|_{\GS_q}\le   
C_q \1 h_1\1_{2, 2q} \1 h_2\1_{2, 2q} \1 \hat p\1_{1, q},
\end{equation}
with a constant $C_q = C_q(t_0)$. 
\end{lem}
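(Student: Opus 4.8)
The strategy is to reduce the operator $h_1\op^{\rm a}_1(p_\BT)h_2$ to a superposition of the simple integral operators covered by Proposition~\ref{BS:prop}, using the Fourier representation of $p$ in its first two variables. First I would write, for a Schwartz function $u$,
\begin{equation*}
p_\BT(\bx,\by,\bxi) = p(\bw,\bz,\bxi) = \frac{1}{(2\pi)^d}\iint e^{i\bw\cdot\boldeta + i\bz\cdot\bmu}\,\hat p(\boldeta,\bmu,\bxi)\,d\boldeta\,d\bmu,
\end{equation*}
and substitute $\bw = t_{11}\bx+t_{12}\by$, $\bz = t_{21}\bx+t_{22}\by$ into the exponent. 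This turns the phase into $\bx\cdot(t_{11}\boldeta+t_{21}\bmu) + \by\cdot(t_{12}\boldeta+t_{22}\bmu)$, so that after also carrying out the $\bxi$-integration in \eqref{ampl1:eq} against $e^{i(\bx-\by)\cdot\bxi}$ the kernel of $h_1\op^{\rm a}_1(p_\BT)h_2$ becomes an integral over $(\boldeta,\bmu)$ of kernels of the form $h_1(\bx)\,e^{i\bx\cdot\BS(\bx,\by)\text{-part}}\,(\cdots)\,h_2(\by)$. More precisely, for each fixed $(\boldeta,\bmu)$ one gets (up to the $\hat p$ weight) the operator $M_{h_1}\,E_1\,\op_1(\mathbf 1)\,E_2\,M_{h_2}$, where $E_1,E_2$ are unitary multiplication operators by the unimodular exponentials in $\bx$ resp.\ $\by$; since these exponentials have modulus one, they can be absorbed without changing $\GS_q$-quasi-norms, and the $\bxi$-integral produces a kernel of exactly the type $f(\bx)e^{i\bx\cdot\BS\by}g(\by)$ with $\BS$ a fixed non-degenerate linear map (a $\bxi$-Fourier transform of a delta, essentially $\bx - \by$ paired with $\bxi$), $f = h_1$, $g = h_2$.

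Next I would make this rigorous as a vector-valued (Bochner) integral: the map $(\boldeta,\bmu)\mapsto h_1\op^{\rm a}_1\big(e^{i\bw\cdot\boldeta+i\bz\cdot\bmu}\hat p(\boldeta,\bmu,\cdot)\big)h_2$ takes values in $\GS_q$, and I want
\begin{equation*}
h_1\op^{\rm a}_1(p_\BT)h_2 = \frac{1}{(2\pi)^d}\iint h_1\op^{\rm a}_1\big(e^{i\bw\cdot\boldeta+i\bz\cdot\bmu}\,\hat p(\boldeta,\bmu,\cdot)\big)h_2\,d\boldeta\,d\bmu.
\end{equation*}
For each fixed $(\boldeta,\bmu)$ Proposition~\ref{BS:prop} (with $q$ in place of its parameter, noting $q\le 1\le 2$) gives
\begin{equation*}
\big\| h_1\op^{\rm a}_1\big(e^{i\bw\cdot\boldeta+i\bz\cdot\bmu}\,\hat p(\boldeta,\bmu,\cdot)\big)h_2\big\|_{\GS_q}\le C_q(t_0)\,\1 h_1\1_{2,q}\cdot\1 h_2\1_{2,q}\cdot\|\hat p(\boldeta,\bmu,\cdot)\|_{?},
\end{equation*}
but one has to be careful about which norm of $h_1,h_2$ and of $\hat p$ in $\bxi$ appears. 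The point is that Proposition~\ref{BS:prop} as stated has the same exponent $q$ on both slots $f$ and $g$; here, because we will eventually apply the $q$-triangle inequality \eqref{triangle:eq} over the lattice in $(\boldeta,\bmu)$, we should group the $\bxi$-variable of $\hat p$ together with one of the spatial factors. I would instead invoke a bilinear/Hölder combination: split the kernel factor $e^{i\bx\xi}$ and treat $\hat p(\boldeta,\bmu,\bxi)$ as an $\plainL1_\bxi$ density, apply Proposition~\ref{BS:prop} pointwise in $\bxi$ to the rank-type operator and integrate in $\bxi$ using the $q$-triangle inequality. This is exactly where the assumption $\hat p(\cdot,\cdot,\bxi)$ being integrable in $\bxi$ after being measured in the $\plainl q(\plainL1)$ lattice quasi-norm enters.

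Then I would assemble the global bound. Discretise the $(\boldeta,\bmu)$-integral over unit cubes $\CC_\bn$, $\bn\in\Z^{2d}$, write the operator as $\sum_\bn(\text{piece localised to }\CC_\bn)$, apply \eqref{triangle:eq} to get $\|\cdot\|_{\GS_q}^q\le\sum_\bn\|\cdot\|_{\GS_q}^q$, bound each piece by Proposition~\ref{BS:prop} with constant $C_q(t_0)$ (the matrix $\BS$ entering Proposition~\ref{BS:prop} has entries controlled by $t_0$, so the constant depends only on $t_0$ as claimed), and recognise $\big(\sum_\bn(\sup_{\CC_\bn}\text{or }\int_{\CC_\bn})^q\big)^{1/q}$ as the lattice quasi-norm $\1\hat p\1_{1,q}$ of $\hat p$ in the $(\boldeta,\bmu)$-variables with $\plainL1$ in $\bxi$. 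The factors $\1 h_1\1_{2,2q}$ and $\1 h_2\1_{2,2q}$ (rather than $\1\cdot\1_{2,q}$) arise because after the $q$-triangle summation one needs a Hölder-type inequality in the lattice index to separate the two spatial factors: $\1 f g\1$-type quantities split as $\1 f\1_{2,2q}\1 g\1_{2,2q}$ by Cauchy--Schwarz at the level of the $\plainl q$-sums (this is the standard trick, e.g.\ as in the BKS/BS-U treatment). Finally a routine approximation argument (truncating $p$ to make everything Schwartz and passing to the limit, legitimate because the right-hand side is finite by hypothesis) removes any smoothness assumptions used along the way.

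\textbf{Main obstacle.} The delicate point is the bookkeeping of exponents: Proposition~\ref{BS:prop} is symmetric in $f,g$ with a single exponent $q\in(0,2]$, whereas the conclusion \eqref{trace_amplitude1:eq} uses the exponent $2q$ for the weights and places $\hat p$ with $\plainl q(\plainL1)$. Getting these to match requires applying Proposition~\ref{BS:prop} with its parameter equal to $2q$ (allowed since $2q\le 2$) to obtain $\1 h_1\1_{2,2q}\1 h_2\1_{2,2q}$, and then using the $q$-triangle inequality \eqref{triangle:eq} — not the $2q$-triangle inequality — when summing over the $(\boldeta,\bmu)$-lattice to recover $\1\hat p\1_{1,q}$, together with an $\plainl{2q}$-Hölder step to handle the product structure; keeping track of which inequality is used where, and checking that the interchange of the Bochner integral with the operator is valid, is the part that demands the most care. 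Everything else is a direct unwinding of the definition \eqref{ampl1:eq}–\eqref{opt:eq} and absorption of unimodular exponential factors.
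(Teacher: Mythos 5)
Your plan has a genuine gap. You propose to expand $p$ via its Fourier transform in $(\bw,\bz)$, view $h_1\op_1^{\rm a}(p_\BT)h_2$ as a continuous superposition over $(\boldeta,\bmu)$ of simple kernel operators, discretise into unit cubes, apply \eqref{triangle:eq}, and bound each piece by Proposition~\ref{BS:prop}. But within each cube you still have a continuous integral of operators, not a single operator of the form covered by Proposition~\ref{BS:prop}, and for $q<1$ the functional $\|\cdot\|_{\GS_q}$ is only a quasi-norm: it is not convex, so the $\GS_q$-quasi-norm of an integral of operators is not bounded by the integral of the quasi-norms. Precisely the step you flag as ``checking that the interchange of the Bochner integral with the operator is valid'' and ``integrate in $\bxi$ using the $q$-triangle inequality'' fails in the range $q\in(0,1)$ this lemma is really about. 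Discretising in $\bxi$ too does not save it, because you would still face a continuous superposition inside each cube. The bookkeeping of exponents you identify as the main obstacle is real, but it is the less serious issue.

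The paper avoids superposition entirely by a single \emph{global factorisation}: after inserting the Fourier representation of $p$ into the kernel, one writes $h_1\op_1^{\rm a}(p_\BT)h_2 = B_1B_2^*$, where $B_j:\plainL2(\R^{3d})\to\plainL2(\R^d)$ have kernels
\begin{equation*}
b_1(\bx;\boldeta,\bmu,\bxi)=\frac{h_1(\bx)}{(2\pi)^d}\,e^{i\bx\cdot(\bxi+t_{11}\boldeta+t_{21}\bmu)}\,\hat p(\boldeta,\bmu,\bxi)^{1/2},\quad
b_2(\bx;\boldeta,\bmu,\bxi)=\frac{h_2(\bx)}{(2\pi)^d}\,e^{i\bx\cdot(\bxi-t_{12}\boldeta-t_{22}\bmu)}\,|\hat p(\boldeta,\bmu,\bxi)|^{1/2},
\end{equation*}
with $z^{1/2}=z|z|^{-1/2}$. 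Each $B_j$ is \emph{exactly} of the form in Proposition~\ref{BS:prop} with $\BS:\R^{3d}\to\R^d$ having entries bounded in terms of $t_0$, so that proposition applied with exponent $2q\in(0,2]$ gives $\|B_j\|_{\GS_{2q}}\le C_q(t_0)\,\1|\hat p|^{1/2}\1_{2,2q}\,\1 h_j\1_{2,2q}$. Then the H\"older inequality \eqref{holder:eq} ($\GS_q=\GS_{2q}\cdot\GS_{2q}$) together with the elementary identity $\1|\hat p|^{1/2}\1_{2,2q}^2=\1\hat p\1_{1,q}$ yields \eqref{trace_amplitude1:eq} directly. You correctly guess that Proposition~\ref{BS:prop} should be applied with exponent $2q$ to produce the $\1 h_j\1_{2,2q}$ factors, but the mechanism that produces $\1\hat p\1_{1,q}$ is the splitting of $|\hat p|^{1/2}$ between the two global factors $B_1,B_2$, not a lattice summation of operator quasi-norms.
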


\begin{proof}
Represent the amplitude $a$ via its Fourier transform 
\begin{equation*}
p(\bw, \bz, \bxi)
= \frac{1}{(2\pi)^{d}}
\iint e^{i\bz\cdot \boldeta + i\bw\cdot \bmu} \hat p(\boldeta, \bmu, \bxi)
d\boldeta d\bmu,
\end{equation*}
and rewrite $A = h_1 \op_1^{\rm a}(p_\BT) h_2$ as follows: 
\begin{equation*}
A = B_1 B_2^*,
\end{equation*}
where $B_j: \plainL2(\R^{3d})\to \plainL2(\R^d), \ j = 1, 2$, are the operators 
with the kernels
\begin{align*}
b_1(\bx; \boldeta, \bmu, \bxi)
= &\ \frac{1}{(2\pi)^d}
h_1(\bx) e^{i\bx\cdot(\bxi + t_{11}\boldeta + t_{21} \bmu)} 
\hat p(\boldeta, \bmu, \bxi)^{\frac{1}{2}},\\[0.2cm]
b_2(\bx; \boldeta, \bmu, \bxi)
= &\ \frac{1}{(2\pi)^d} h_2(\bx)
e^{i\bx\cdot (\bxi- t_{12}\boldeta - t_{22}\bmu)} 
|\hat p(\boldeta, \bmu, \bxi)|^{\frac{1}{2}},
\end{align*}
where $z^{1/2} = z |z|^{-1/2}$ for any $z\not =0$.
By Proposition \ref{BS:prop}, 
\begin{equation*}
\| B_j\|_{\GS_{2q}}\le C_q(t_0) \1 |\hat p|^{1/2}\1_{2, 2q} \1 h_j\1_{2, 2q}, j=1, 2.
\end{equation*}
Now \eqref{trace_amplitude1:eq} follows from \eqref{holder:eq}.
\end{proof}

It is usually more convenient to write $\GS_q$-estimates in terms of the
amplitudes themselves, and not their Fourier transforms. 
For $m, n = 0, 1, \dots,$ let 
\begin{align}
P_{n, m}(\bw, \bz, \bxi; p) = &\ 
\frac{1}{1+|\bz-\tau\bw|^m}
 \sum_{n_1, n_2=0}^{n}\sum_{l=0}^m
|\nabla_{\bw}^{n_1} \nabla_{\bz}^{n_2}\nabla_{\bxi}^l
 p(\bw, \bz, \bxi)|,\label{Pmn:eq}\\[0.3cm]
Q_{n, m}(\bxi; p) = &\ 
\iint P_{n, m}(\bw, \bz, \bxi) 
d\bw d\bz.\label{Qmn:eq}
\end{align}
The parameter $\tau$ is defined in \eqref{tau:eq}. 

\begin{cor} \label{symbol:cor} 
Let the matrix $\BT$ and the functions 
$h_1, h_2$ be as in Lemma \ref{q_amplitude1:lem},\ and let 
$Q_{n, m}(p)\in \plainl{q}(\plainL{1})(\R^d)$ 
with some $q\in (0, 1]$, and 
\begin{equation}\label{n:eq}
n = [dq^{-1}] + 1.
\end{equation}
Then
\begin{equation}\label{symbol0:eq}
\|h_1 \op^{\rm a}_1(p_\BT) h_2\|_{\GS_q}\le C_q\1 h_1 \1_{2, 2q}
\1 h_2\1_{2, 2q} 
\1 Q_{n, 0}(p)\1_{1, q},
\end{equation}
with a constant $C_q = C_q(t_0)$. 
\end{cor}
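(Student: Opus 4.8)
The plan is to deduce Corollary~\ref{symbol:cor} from Lemma~\ref{q_amplitude1:lem} by estimating the lattice quasi-norm $\1\hat p\1_{1,q}$ of the double Fourier transform in terms of the quantity $\1 Q_{n,0}(p)\1_{1,q}$. The mechanism is the standard one: extra smoothness of $p$ in the variables $\bw,\bz$ turns into decay of $\hat p$ in the dual variables $\boldeta,\bmu$, and enough decay makes the $\ell^q(\plainL1)$-quasi-norm in $(\boldeta,\bmu)$ controllable, uniformly in $\bxi$, by the $\plainL1$-norm in $(\bw,\bz)$ of finitely many derivatives. The exponent $n=[dq^{-1}]+1$ is exactly what is needed for a weight $(1+|\boldeta|+|\bmu|)^{-n}$ to have a finite $\ell^q$-sum over the integer lattice $\Z^{2d}$ (since $nq>2d$), so this is where \eqref{n:eq} enters.

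First I would record the integration-by-parts identity
\[
(1+|\boldeta|^2+|\bmu|^2)^{n}\hat p(\boldeta,\bmu,\bxi)
= \frac{1}{(2\pi)^d}\iint e^{-i\bw\cdot\boldeta-i\bz\cdot\bmu}\,
\bigl(1-\D_\bw-\D_\bz\bigr)^{n} p(\bw,\bz,\bxi)\,d\bw\,d\bz,
\]
which, after expanding the differential operator, gives the pointwise bound
\[
|\hat p(\boldeta,\bmu,\bxi)|
\le \frac{C\,(1+|\boldeta|+|\bmu|)^{-2n}}{(2\pi)^d}
\iint \sum_{n_1,n_2=0}^{n}|\nabla_\bw^{n_1}\nabla_\bz^{n_2}p(\bw,\bz,\bxi)|\,d\bw\,d\bz
\le C'\,(1+|\boldeta|+|\bmu|)^{-2n}\,Q_{n,0}(\bxi;p),
\]
using that $P_{n,0}$ (with $m=0$ the weight $1/(1+|\bz-\tau\bw|^0)$ is just $1$) dominates all these derivatives and that $Q_{n,0}$ is its full $(\bw,\bz)$-integral, and crucially that the constant here is independent of $\bxi$. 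Then for a unit cube $\CC_{\bn}\subset\R^{3d}$ with $\bn=(\bn',\bn'',\bn''')$, $\bn'\in\Z^d$ indexing the $\boldeta$-block and $\bn''\in\Z^d$ the $\bmu$-block and $\bn'''\in\Z^d$ the $\bxi$-block, one has $\int_{\CC_{\bn}}|\hat p|\,d\boldeta\,d\bmu\,d\bxi \le C(1+|\bn'|+|\bn''|)^{-2n}\int_{\CC_{\bn'''}+\textup{(unit nbhd)}}Q_{n,0}(\bxi;p)\,d\bxi$. Raising to the power $q$ and summing over $\bn\in\Z^{3d}$, the $(\bn',\bn'')$-sum factors out as $\sum_{\bn',\bn''}(1+|\bn'|+|\bn''|)^{-2nq}$, which is finite precisely because $2nq>2d$, hence $\geq$ $q n > d$; the leftover $\bn'''$-sum is, up to an overlap-counting constant depending only on $d$, exactly $\1 Q_{n,0}(p)\1_{1,q}^q$. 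This yields $\1\hat p\1_{1,q}\le C_q\1 Q_{n,0}(p)\1_{1,q}$, and feeding this into \eqref{trace_amplitude1:eq} gives \eqref{symbol0:eq}; one also checks that the $\plainL1(\R^{2d})$-hypothesis of Lemma~\ref{q_amplitude1:lem} is implied by $Q_{n,0}(p)\in\ell^q(\plainL1)$ together with $n\ge 1$.

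The only genuinely delicate point is bookkeeping with the replacement of a cube in $\R^{3d}$ by a product of cubes in the three $\R^d$-blocks, and making sure the weight $(1+|\boldeta|+|\bmu|)^{-2n}$ can be pulled out of the $(\boldeta,\bmu)$-integral over a unit cube at the cost of only a lattice-dependent constant (it is comparable to $(1+|\bn'|+|\bn''|)^{-2n}$ on $\CC_{\bn}$, up to a fixed factor). The slight subtlety that the edge of $\CC_{\bu}$ for the $\1\cdot\1_{r,\infty}$-type estimates, or overlapping unit cubes, introduces only a bounded multiplicity is handled exactly as in the proof of Proposition~\ref{BS:prop} / \cite{BS_U}, so I would not belabour it. In short, the proof is a one-paragraph Fourier-analytic reduction: differentiate $n$ times, note $nq>d$ makes the dual-lattice $\ell^q$-sum converge, and invoke Lemma~\ref{q_amplitude1:lem}.
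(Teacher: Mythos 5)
Your overall strategy is the paper's: integrate by parts in $(\bw,\bz)$ to trade smoothness for decay of $\hat p$ in $(\boldeta,\bmu)$, observe that $n=[dq^{-1}]+1$ makes the resulting dual-lattice $\ell^q$-sum converge, and feed $\1\hat p\1_{1,q}\lesssim\1 Q_{n,0}(p)\1_{1,q}$ into Lemma~\ref{q_amplitude1:lem}. However, there is a concrete error in the integration-by-parts step. You use the operator $(1-\Delta_\bw-\Delta_\bz)^n$, and its expansion involves terms $\Delta_\bw^{j_1}\Delta_\bz^{j_2}$ with $j_1+j_2\le n$, i.e.\ derivatives of order up to $2n$ in $\bw$ (and up to $2n$ in $\bz$). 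Your claimed pointwise bound
\[
|\hat p(\boldeta,\bmu,\bxi)|\le C(1+|\boldeta|+|\bmu|)^{-2n}\iint\sum_{n_1,n_2=0}^{n}|\nabla_\bw^{n_1}\nabla_\bz^{n_2}p|\,d\bw\,d\bz
\]
therefore does not follow from your displayed identity: the right-hand side would need derivatives up to order $2n$, which $Q_{n,0}$ (whose sum runs only to $n$ in each of $\bw,\bz$) does not control.

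The fix is to not overshoot on decay. The paper's bound uses $(1+|\boldeta|)^{-n}(1+|\bmu|)^{-n}$ together with derivatives $\nabla_\bw^{n_1}\nabla_\bz^{n_2}p$, $n_1,n_2\le n$ — achievable, for instance, by integrating by parts $n$ times along the coordinate direction $j$ with $|\eta_j|\ge|\boldeta|/\sqrt d$ (when $|\boldeta|\ge 1$), and likewise in $\bz$. This weaker weight is still enough: the lattice sum splits into a product $\bigl(\sum_{\bn'}(1+|\bn'|)^{-nq}\bigr)\bigl(\sum_{\bn''}(1+|\bn''|)^{-nq}\bigr)$, each factor finite because $nq>d$. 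With this correction, the rest of your argument — the decomposition of the unit cube $\CC_\bn\subset\R^{3d}$ into $\CC_{\bn'}\times\CC_{\bn''}\times\CC_{\bn'''}$, raising to the $q$-th power and summing, and the observation that $Q_{n,0}(p)\in\plainl q(\plainL1)$ implies the $\plainL1(\R^{2d})$-hypothesis of Lemma~\ref{q_amplitude1:lem} — is sound and mirrors the paper's proof. (Two minor cosmetic points: the extra neighbourhood of $\CC_{\bn'''}$ is unnecessary, since the pointwise bound makes the $\bxi$-integral sit exactly on $\CC_{\bn'''}$; and the sentence ``$2nq>2d$, hence $\ge qn>d$'' should just read ``$2nq>2d$ iff $nq>d$'', which holds as $n>dq^{-1}$.)
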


\begin{proof} 
Integrating by parts, we get:
\begin{equation*}
|\hat p(\boldeta, \bmu, \bxi)|
\le C(n) (1+|\boldeta|)^{-n} (1+ |\bmu|)^{-n}
\sum_{n_1, n_2 = 0}^{n}
 \iint
|\nabla_{\bw}^{n_1} \nabla_{\bz}^{n_2} p(\bw, \bz, \bxi)|
d\bw d\bz. 
\end{equation*}
For $n = [dq^{-1}] + 1$ the function on the right-hand side 
belongs to $\plainl{q}(\plainL{1})(\R^{3d})$, 
and its quasi-norm  \eqref{brackh:eq} does not exceed 
$ C\1 Q_{n, 0}(p)\1_{1, q}$. Now \eqref{symbol0:eq} 
follows from Lemma \ref{q_amplitude1:lem}. 
\end{proof}
 
\begin{lem} \label{amplitude:lem} 
Suppose that $\BT\in GL(2, \R)$ satisfies \eqref{T:eq}. 
Let $h_1, h_2$ be as in Lemma \ref{q_amplitude1:lem},\ and let 
$Q_{n, m}\in \plainl{q}(\plainL{1})(\R^d)$ 
with some $q\in (0, 1]$, with $n$ satisfying \eqref{n:eq}, and some 
$m = 0, 1, \dots$. Then  
\begin{equation}\label{amplitude:eq}
\|h_1 \op^{\rm a}_1(p_\BT) h_2\|_{\GS_q}
\le  
C_q
\1 h_1\1_{2, 2q} 
\1 h_2\1_{2,2q}\1 Q_{n, m}(p)\1_{1, q},
\end{equation}
with a constant $C_q=C_q(t_0)$.  
\end{lem}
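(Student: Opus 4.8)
The plan is to deduce \eqref{amplitude:eq} from Corollary \ref{symbol:cor}, by trading the decay factor $(1+|\bz-\tau\bw|^m)^{-1}$ present in $P_{n,m}$ for $\bxi$-derivatives of the amplitude; the device for this is an integration by parts in $\bxi$. Under the normalization \eqref{T:eq}, formula \eqref{xy:eq} gives $(\det\BT)(\bx-\by)=\tau\bw-\bz$, so that in the kernel of $\op^{\rm a}_1(p_\BT)$, which equals $(2\pi)^{-d}\int e^{i(\bx-\by)\cdot\bxi}p(\bw,\bz,\bxi)\,d\bxi$, the vector $\bx-\by$ equals $-(\det\BT)^{-1}(\bz-\tau\bw)$. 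For $\bv\in\Rd$ put $L_\bv=(1+|\bv|^2)^{-1}(1+i\bv\cdot\nabla_\bxi)$, a first-order operator in $\bxi$ with smooth bounded coefficients whose transpose satisfies ${}^{t}L_\bv\,e^{i\bv\cdot\bxi}=e^{i\bv\cdot\bxi}$; iterating and integrating by parts $m$ times gives $\int e^{i\bv\cdot\bxi}\phi\,d\bxi=\int e^{i\bv\cdot\bxi}(L_\bv^m\phi)\,d\bxi$. Applying this with $\bv=\bx-\by$ and re-expressing $\bv$ through $\bz-\tau\bw$ yields the operator identity
\[
\op^{\rm a}_1(p_\BT)=\op^{\rm a}_1(\tilde p_\BT),\qquad
\tilde p(\bw,\bz,\bxi)=\frac{\bigl(1-i(\det\BT)^{-1}(\bz-\tau\bw)\cdot\nabla_\bxi\bigr)^m p(\bw,\bz,\bxi)}{\bigl(1+|\det\BT|^{-2}|\bz-\tau\bw|^2\bigr)^{m}} .
\]
By \eqref{t0:eq} the scalar prefactor is a smooth function of $\rho=\bz-\tau\bw$ all of whose derivatives are $\le C(1+|\rho|)^{-m}\le C(1+|\rho|^m)^{-1}$, with $C$ depending only on $t_0$ and $\d_0$.

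Next I estimate $Q_{n,0}(\tilde p)$. Expanding the numerator of $\tilde p$ one may write $\tilde p=\sum_{|\al|\le m}c_\al(\bz-\tau\bw)\,\nabla_\bxi^\al p(\bw,\bz,\bxi)$, where $c_\al(\rho)$ is a constant multiple of $\rho^\al(1+|\det\BT|^{-2}|\rho|^2)^{-m}$; since $|\al|\le m$, \eqref{t0:eq} gives $|\nabla_\rho^j c_\al(\rho)|\le C_j(1+|\rho|)^{-m}\le C_j(1+|\rho|^m)^{-1}$ for every $j$. Now apply the Leibniz rule to $\nabla_\bw^{n_1}\nabla_\bz^{n_2}\tilde p$ with $n_1,n_2\le n$: each derivative falling on a factor $c_\al(\bz-\tau\bw)$ keeps it bounded by $C(1+|\bz-\tau\bw|^m)^{-1}$ (chain rule and $|\tau|\le 2t_0$), while the derivatives falling on $p$ produce terms $\nabla_\bw^{n_1'}\nabla_\bz^{n_2'}\nabla_\bxi^l p$ with $n_1',n_2'\le n$ and $l\le m$. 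Hence
\[
\sum_{n_1,n_2=0}^{n}\bigl|\nabla_\bw^{n_1}\nabla_\bz^{n_2}\tilde p(\bw,\bz,\bxi)\bigr|\le C\,P_{n,m}(\bw,\bz,\bxi;p),
\]
with $C=C(t_0,\d_0,m,d)$. Integrating in $\bw,\bz$ gives $Q_{n,0}(\bxi;\tilde p)\le C\,Q_{n,m}(\bxi;p)$ pointwise, hence $\1 Q_{n,0}(\tilde p)\1_{1,q}\le C\1 Q_{n,m}(p)\1_{1,q}$; in particular $Q_{n,0}(\tilde p)\in\plainl{q}(\plainL{1})(\Rd)$ and $\tilde p(\cdot,\cdot,\bxi)\in\plainL{1}(\R^{2d})$ for a.e.\ $\bxi$, by the hypothesis $Q_{n,m}(p)\in\plainl{q}(\plainL{1})(\Rd)$.

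Applying Corollary \ref{symbol:cor} to the amplitude $\tilde p$, with the same $n=[dq^{-1}]+1$, and invoking the operator identity, I obtain
\[
\|h_1\op^{\rm a}_1(p_\BT)h_2\|_{\GS_q}=\|h_1\op^{\rm a}_1(\tilde p_\BT)h_2\|_{\GS_q}\le C_q\1 h_1\1_{2,2q}\1 h_2\1_{2,2q}\1 Q_{n,0}(\tilde p)\1_{1,q}\le C_q\1 h_1\1_{2,2q}\1 h_2\1_{2,2q}\1 Q_{n,m}(p)\1_{1,q},
\]
which is \eqref{amplitude:eq}, with $C_q=C_q(t_0)$ (understood to depend also on $\d_0$, $m$ and $d$).

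The step requiring care is the operator identity $\op^{\rm a}_1(p_\BT)=\op^{\rm a}_1(\tilde p_\BT)$: one must check that the repeated integration by parts in $\bxi$ produces no boundary contributions and that $\tilde p$ still meets the integrability requirements of Corollary \ref{symbol:cor}. This is arranged in the usual way, by first verifying the identity for amplitudes $p$ that are Schwartz in $\bxi$, where it is immediate, and then passing to the general case via the estimate of the second paragraph together with a density argument. It is precisely here that the normalization \eqref{T:eq} enters: by \eqref{xy:eq} the scalar weight $(1+|\bz-\tau\bw|^m)^{-1}$ is a function of $\bx-\by$ alone, which is what allows it to be carried through the oscillatory integral and absorbed into the amplitude $\tilde p$. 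All the remaining manipulations are routine.
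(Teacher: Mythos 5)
Your proof follows essentially the same approach as the paper: integrate by parts $m$ times in $\bxi$ using a first-order operator that reproduces the oscillatory factor, producing a new amplitude whose $Q_{n,0}$ quasi-norm is controlled by $Q_{n,m}(p)$, and then invoke Corollary \ref{symbol:cor}. The one place where you diverge slightly from the paper is in the normalization of the integration-by-parts operator. You take $L_\bv=(1+|\bv|^2)^{-1}(1+i\bv\cdot\nabla_\bxi)$ with $\bv=\bx-\by$, which after passing to the variables $(\bw,\bz)$ produces the denominator $\bigl(1+|\det\BT|^{-2}|\bz-\tau\bw|^2\bigr)^{m}$. The paper instead uses $\CP^{(\pm)}_\bx=\bigl(1\pm i(\det\BT)^2\bx\cdot\nabla_{\bxi}\bigr)\bigl(1+(\det\BT)^2|\bx|^{2}\bigr)^{-1}$, which is designed precisely so that the rescaled denominator becomes $\bigl(1+|\tau\bw-\bz|^2\bigr)^m$ with no residual $\det\BT$ factor. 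The practical consequence: with your normalization, when you differentiate the coefficients $c_\al(\bz-\tau\bw)$ in $\bw,\bz$, each derivative brings a factor $|\det\BT|^{-1}$ (through the chain rule on the argument of the denominator), so your bound on $P_{n,0}(\tilde p)$ genuinely needs $|\det\BT|\ge\d_0$; you noted this yourself. The lemma as stated, however, asserts $C_q=C_q(t_0)$, independent of $\d_0$, and the paper's normalization delivers exactly that. So your argument proves a marginally weaker form of the lemma. Adopting $\CP^{(\pm)}$ in place of your $L_\bv$ closes the gap without changing anything else.
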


\begin{proof} 
Let 
\begin{equation*}
\CP^{(\pm)}_\bx = \bigl(1\pm i(\det\BT)^2\bx\cdot\nabla_{\bxi}\bigr)
\bigl(1+(\det\BT)^2|\bx|^{2}\bigr)^{-1}.
\end{equation*}
Clearly,
$\CP^{(-)}_\bx e^{i\bxi\cdot\bx} = e^{i\bxi\cdot\bx}$, so 
integrating by parts $m$ times,
we get the following formula for the kernel of the operator 
$\op^{\rm a}_1(p_\BT)$:
\begin{equation*}
\frac{1}{(2\pi)^d} \int e^{i\bxi\cdot(\bx - \by)} 
p^{(m)}_\BT(\bx, \by, \bxi) d\bxi,
\end{equation*}
with 
\begin{equation*}
p^{(m)}_\BT(\bx, \by, \bxi) = 
\bigl(\CP^{(+)}_{\bx-\by}\bigr)^m p_\BT(\bx, \by, \bxi),
\end{equation*}
so by \eqref{xy:eq} 
\begin{equation*}
p^{(m)}(\bw, \bz, \bxi) = 
\bigl(1+|\tau\bw-\bz|^{2}\bigr)^{-m}
\bigl(1+ i(\det\BT)(\tau\bw-\bz)\cdot\nabla_{\bxi}\bigr)^m
p(\bw, \bz, \bxi).
\end{equation*}
Now it is straightforward to see that
\begin{equation*}
P_{n, 0}(\bw, \bz, \bxi; p^{(m)})
\le C(t_0) P_{n, m}(\bw, \bz, \bxi; p).
\end{equation*}
By Corollary \ref{symbol:cor} this implies the proclaimed result.
\end{proof}

In the next Theorem we replace the $(2,2q)$-quasi-norms of functions $h_1, h_2$
by much weaker ones. 
 
\begin{thm}\label{full:thm} 
Suppose that $\BT\in GL(2, \R)$ satisfies \eqref{T:eq}. 
Let $h_1, h_2~\in~\plainl{\infty}(\plainL{2})(\R^d)$, 
and let $P_{n, m}\in \plainl{q}(\plainL{1})(\R^{3d})$ 
with some $q\in (0, 1]$, with $n$ satisfying \eqref{n:eq} and some 
$m = 0, 1, \dots$. Then  
\begin{equation}\label{amplitude1:eq}
\|h_1 \op^{\rm a}_1(p_\BT) h_2\|_{\GS_q} 
\le  C_{q, m} \1 h_1\1_{2, \infty}
\1 h_2\1_{2, \infty} \1 P_{n, m}(p)\1_{1, q},
\end{equation}
with a constant $C_{q, m} = C_{q, m}(t_0, \d_0)$ depending on $t_0$ and $\d_0$. 
\end{thm}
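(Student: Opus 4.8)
The plan is to upgrade Lemma~\ref{amplitude:lem} by replacing the lattice quasi-norms $\1 h_j\1_{2,2q}$ with the much weaker uniform quasi-norms $\1 h_j\1_{2,\infty}$, at the price of using the \emph{local} quantity $P_{n,m}$ rather than its $\bxi$-integral $Q_{n,m}$. The device is a partition of unity on the physical space $\R^d_\bx$. Fix a smooth partition $1=\sum_{\bj\in\Z^d}\varphi_\bj^2$ with $\varphi_\bj$ supported in a fixed-size neighbourhood of the lattice point $\bj$, so that $h_1\,\op^{\rm a}_1(p_\BT)\,h_2=\sum_{\bj,\bk\in\Z^d}(h_1\varphi_\bj)\,\op^{\rm a}_1(p_\BT)\,(\varphi_\bk h_2)$. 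For each summand the functions $h_1\varphi_\bj$ and $\varphi_\bk h_2$ have $(2,2q)$-quasi-norm comparable to their $\plainL2$-norm over one cube, which is bounded by $\1 h_1\1_{2,\infty}$ and $\1 h_2\1_{2,\infty}$ respectively.

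First I would apply Lemma~\ref{amplitude:lem} (or Corollary~\ref{symbol:cor} composed with the integration-by-parts trick already in Lemma~\ref{amplitude:lem}) to each piece, but with the amplitude $p$ localised: the key point is that in the $(\bj,\bk)$-summand only the values of $p$ with $\bw,\bz$ near a bounded region determined by $\bj,\bk$ matter, because $\bx$ ranges over $\supp\varphi_\bj$, $\by$ over $\supp\varphi_\bk$, and by \eqref{xy:eq} both $\bw$ and $\bz$ are then confined to balls of fixed radius around linear combinations of $\bj$ and $\bk$. Thus Lemma~\ref{amplitude:lem} gives
\begin{equation*}
\|(h_1\varphi_\bj)\,\op^{\rm a}_1(p_\BT)\,(\varphi_\bk h_2)\|_{\GS_q}
\le C_{q,m}\,\1 h_1\1_{2,\infty}\1 h_2\1_{2,\infty}\,
\biggl(\int_{\bxi}\Bigl(\int_{U_{\bj,\bk}}P_{n,m}(\bw,\bz,\bxi;p)\,d\bw\,d\bz\Bigr)^q d\bxi\biggr)^{1/q},
\end{equation*}
where $U_{\bj,\bk}\subset\R^{2d}$ is a fixed-size box (whose location depends on $\BT$, hence the appearance of $\d_0$ in the constant). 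Here I would need the decay factor $(1+|\bz-\tau\bw|^m)^{-1}$ inside $P_{n,m}$ precisely to control the sum over the ``long range'' pairs $(\bj,\bk)$: when $\bj$ and $\bk$ are far apart, $|\bz-\tau\bw|$ is forced to be large on $U_{\bj,\bk}$, so this factor supplies summability in $|\bj-\bk|$.

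Then I would sum over $\bj,\bk$ using the quasi-triangle inequality \eqref{triangle:eq}: since $q\le1$,
\begin{equation*}
\|h_1\,\op^{\rm a}_1(p_\BT)\,h_2\|_{\GS_q}^q
\le \sum_{\bj,\bk}\|(h_1\varphi_\bj)\,\op^{\rm a}_1(p_\BT)\,(\varphi_\bk h_2)\|_{\GS_q}^q
\le C\,\1 h_1\1_{2,\infty}^q\1 h_2\1_{2,\infty}^q\sum_{\bj,\bk}\int_{\bxi}\Bigl(\int_{U_{\bj,\bk}}P_{n,m}\,d\bw\,d\bz\Bigr)^q d\bxi.
\end{equation*}
The inner double integral over $U_{\bj,\bk}$, raised to the power $q\le1$, is dominated by the sum over unit cubes inside $U_{\bj,\bk}$ of the $q$-th powers of the corresponding cube-integrals; and each unit cube in $\R^{2d}_{\bw,\bz}$ is hit only boundedly many times as $(\bj,\bk)$ ranges over pairs with a given separation. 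Once the long-range factor handles the separation, the double sum collapses (up to a constant) to $\int_\bxi\sum_{\bn\in\Z^{2d}}\bigl(\int_{\CC_\bn}|\cdots|\bigr)^q\,d\bxi$, which is exactly $\1 P_{n,m}(p)\1_{1,q}^q$ by \eqref{brackh:eq}. The main obstacle, and the step requiring the most care, is this bookkeeping: verifying that after inserting the partition of unity the relevant $\bw,\bz$-region for each $(\bj,\bk)$ is genuinely of bounded size (this uses non-degeneracy of $\BT$, i.e. $\d_0$), and that the weight $(1+|\bz-\tau\bw|^m)^{-1}$ with $m$ \emph{arbitrary but fixed} is enough to make the lattice double sum converge and reassemble into the single lattice quasi-norm $\1 P_{n,m}(p)\1_{1,q}$ without losing the sharp form of the right-hand side.
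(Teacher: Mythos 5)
Your strategy is the paper's: localize with a partition of unity, apply Lemma~\ref{amplitude:lem} to each piece (where the $(2,2q)$-quasi-norms of the localized weights are controlled by $\1 h_j\1_{2,\infty}$), and sum via the quasi-triangle inequality \eqref{triangle:eq}. The only structural difference is the direction of the decomposition: the paper cuts the amplitude in the $(\bw,\bz)$ variables, setting $p^{(\bj,\bs)}(\bw,\bz,\bxi)=\psi_\bj(\bw)\psi_\bs(\bz)p(\bw,\bz,\bxi)$, and then reads off from \eqref{xy:eq} that $\bx,\by$ are confined to balls, so it may insert characteristic functions of those balls into $h_1,h_2$; you cut $h_1,h_2$ in the $(\bx,\by)$ variables and read off the confinement of $(\bw,\bz)$. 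These are mirror images under the linear map $\BT$ and both work. Two bookkeeping corrections are needed in your writeup. First, Lemma~\ref{amplitude:lem} yields the \emph{lattice} quasi-norm $\1 Q_{n,m}\1_{1,q}$ on the $\bxi$ side, i.e.\ a sum over unit $\bxi$-cubes of $q$-th powers of cube integrals per \eqref{brackh:eq}; your first display uses $\bigl(\int_{\bxi}(\cdot)^q\,d\bxi\bigr)^{1/q}$, which is not the same object for $q<1$, and correspondingly the reassembled quantity should be $\sum_{\bn\in\Z^{3d}}\bigl(\int_{\CC_\bn}P_{n,m}\bigr)^q$, not the mixed $\int_\bxi\sum_{\bn\in\Z^{2d}}(\cdot)^q\,d\bxi$ you wrote. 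Second, the decay factor $(1+|\bz-\tau\bw|^m)^{-1}$ is not the mechanism that makes the $(\bj,\bk)$-sum converge: once localized, the regions $U_{\bj,\bk}\times\CC_\bn$ tile $\R^{3d}$ with uniformly bounded multiplicity (by non-degeneracy of $\BT$), so the sum collapses to $\1 P_{n,m}(p)\1^q_{1,q}$ by covering plus subadditivity of $t\mapsto t^q$ alone; the weight is simply part of the definition of $P_{n,m}$ and hence of the hypothesis $\1 P_{n,m}(p)\1_{1,q}<\infty$. With those two points fixed, your argument coincides with the paper's.
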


\begin{proof}  
Let us define a convenient 
partition of unity. 
The open balls $B(\bj, 2\sqrt d), \bj\in\Z^d,$ form 
a covering of $\R^d$. Let $\{\psi_\bj\}$ be an associated partition of unity 
such that 
\begin{equation}\label{uniform:eq}
|\nabla_\bx^k\psi_{\bj}(\bx)|\le C_k,\ k = 0, 1, \dots, 
\end{equation}
uniformly in $\bj\in\Z^d$. 
Let us consider the operator $\op_1^{\rm a}(p^{(\bj, \bs)}_\BT)$ with the amplitude 
\[
p^{(\bj, \bs)}(\bw, \bz, \bxi) = \psi_\bj(\bw) \psi_\bs(\bz) p(\bw, \bz, \bxi). 
\]
Since $\bw \in B(\bj, 2\sqrt d), \bz\in B(\bs, 2\sqrt d)$, we have 
\begin{gather*}
\bx \in B(\bl, R),\ \by\in B(\bn, R),\ \ 
\textup{with}\ \ 
R = \frac{4t_0\sqrt d}{\d_0},\\[0.2cm] 
\bl = \frac{t_{22}\bj - t_{21} \bs}{\det \BT},\ 
\bn = \frac{-t_{21}\bj + t_{11} \bs}{\det\BT},
\end{gather*}
see \eqref{xy:eq}. 
 Consequently
 \begin{equation*}
  h_1 \op_1^{\rm a}(p^{(\bj, \bs)}_\BT) h_2
 =  h_1 \chi_{\bl, R}\op_1^{\rm a}
 (p^{(\bj, \bs)}_\BT) h_2\chi_{\bn, R}, 
 \end{equation*}
 and hence by Lemma \ref{amplitude:lem},
\begin{align*}
 \| h_1 \op_1^{\rm a}(p^{(\bj, \bs)}_\BT) h_2\|_{\GS_q}
\le C_q\1 h_1 \chi_{\bl, R}\1_{2, 2q}
\1 h_2\chi_{\bn, R}\1_{2, 2q} 
\1 Q_{n, m}(p^{(\bj, \bs)})\1_{1, q}.
\end{align*}
The first two factors are estimated 
by $C\1 h_1\1_{2, \infty}$
and $C\1 h_2\1_{2, \infty}$ respectively, 
with some constant $C=C(t_0, \d_0)$. 
Thus by the triangle inequality 
\eqref{triangle:eq}
\begin{align*}
\| h_1 \op^{\rm a}_1(p_\BT) h_2\|_{\GS_q}^q\le &\ 
\sum_{\bj, \bs}  \| h_1 \op_1^{\rm a}(p^{(\bj, \bs)}_\BT) h_2\|_{\GS_q}^q\\[0.3cm]
\le &\ C_q\1 h_1 \1_{2, \infty}^q
\1 h_2\1_{2, \infty}^q \sum_{\bj, \bs}
\1 Q_{n, m}(p^{(\bj, \bs)})\1_{1, q}^q.
\end{align*}
Remembering that the number of intersecting balls $B(\bj, 2\sqrt d)$ is 
uniformly bounded, we can estimate the sum on the right-hand side 
by $\tilde C\1 P_{n, m}(p)\1_{1, q}^q$. This completes the proof. 
\end{proof}

\subsection{Estimates for the operators \eqref{tquant:eq}} 
Theorem \ref{full:thm} allows amplitudes 
independent of $\bz$, e.g. it allows one to consider 
$t$-pseudo-differential operators \eqref{tquant:eq}. 
We isolate this observation in a separate theorem. 
For a symbol $a = a(\bx, \bxi)$ denote
\begin{equation}\label{F:eq}
\begin{split}
F^{\circ}_{n, m}(\bw, \bxi; a)
= &\ \sum_{k =0}^n 
|\nabla_{\bw}^k \nabla_{\bxi}^m a(\bw, \bxi)|,\\
\ F_{n, m}(\bw, \bxi; a) 
= &\ \sum_{l=0}^m F^{\circ}_{n, l}(\bw, \bxi; a),\ n, m = 0, 1, \dots.
\end{split}
\end{equation}
The constants in the next theorem are independent 
of $t\in [0, 1]$.

\begin{thm}\label{symbol:thm} Let 
$h_1, h_2~\in~\plainl{\infty}(\plainL{2})(\R^d)$, 
let $n$ be as in \eqref{n:eq}, and $q\in (0, 1]$ be some number. 
 \begin{enumerate}
 \item 
 Suppose that $F_{n, n}(a)\in \plainl{q}(\plainL{1})(\R^{2d})$. 
Then for any $t\in [0, 1]$ we have  
\begin{equation}\label{symbol:eq}
\|h_1 \op_{1, t}(a) h_2\|_{\GS_q} 
\le  C_q \1 h_1\1_{2, \infty}
\1 h_2\1_{2, \infty}
\1 F_{n, n}(a)\1_{1, q}.
\end{equation}
\item
Suppose that 
the distance between the supports of the functions $h_1, h_2$ is 
at least $r\ge 1$. If 
$F^{\circ}_{n, m}(a) \in \plainl{q}(\plainL{1})(\R^{2d})$, 
$m \ge n$, then for any $t\in [0, 1]$ 
we have 
\begin{equation}\label{symbol_sep:eq}
\|h_1 \op_{1, t}(a) h_2\|_{\GS_q} 
\le  C_{q, m} r^{\frac{d}{q} - m}\1 h_1\1_{2, \infty}
\1 h_2\1_{2, \infty}
\1 F^{\circ}_{n, m}(a)\1_{1, q}.
\end{equation}
\end{enumerate}
\end{thm}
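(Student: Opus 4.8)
\textbf{Proof plan for Theorem \ref{symbol:thm}.}

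\emph{Part (1).} The plan is to deduce this directly from Theorem \ref{full:thm} by choosing the matrix $\BT$ appropriately for the $t$-quantization. By \eqref{tquant1:eq} we write $\op_{1,t}(a) = \op^{\rm a}_1(p_\BT)$ with $p(\bw,\bz,\bxi) = a(\bw,\bxi)$ (so $p$ is independent of $\bz$) and $\BT = \begin{pmatrix} 1-t & t\\ -1 & 1\end{pmatrix}$. This $\BT$ satisfies \eqref{T:eq} since $(1-t)+t = 1$, it lies in $GL(2,\R)$ because $\det\BT = (1-t)+t = 1$, and since $t\in[0,1]$ we have uniform bounds $t_0 = 1$, $\d_0 = 1$; hence the constant from Theorem \ref{full:thm} is uniform in $t$. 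It remains to compare $P_{n,m}(\bw,\bz,\bxi;p)$ for this $p$ with $F_{n,n}(\bw,\bxi;a)$. Since $p$ does not depend on $\bz$, only the term $n_2 = 0$ survives in \eqref{Pmn:eq}, so $P_{n,m}(\bw,\bz,\bxi;p) = (1+|\bz-\tau\bw|^m)^{-1}\sum_{n_1=0}^n\sum_{l=0}^m |\nabla_\bw^{n_1}\nabla_\bxi^l a(\bw,\bxi)| = (1+|\bz-\tau\bw|^m)^{-1} F_{n,m}(\bw,\bxi;a)$, where $\tau = t_{21}+t_{22} = 0$ here, so the weight is $(1+|\bz|^m)^{-1}$. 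Choosing $m = n$ and integrating in $\bz$ over each unit cube, the factor $\1 P_{n,n}(p)\1_{1,q}$ is bounded by $C_q\1 F_{n,n}(a)\1_{1,q}$ provided $(1+|\bz|^n)^{-1}\in\plainl{q}(\plainL1)(\R^d)$, i.e.\ provided $nq > d$; and $n = [dq^{-1}]+1$ gives exactly $n > dq^{-1}$, so $nq > d$ holds. This yields \eqref{symbol:eq}.

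\emph{Part (2).} Here the point is to exploit the disjoint supports to gain the decay factor $r^{d/q - m}$, using the weight in $P_{n,m}$ more efficiently. As in Part (1) take $p(\bw,\bz,\bxi) = a(\bw,\bxi)$, $\tau = 0$, so $\op_{1,t}(a) = \op^{\rm a}_1(p_\BT)$, and $P_{n,m}(\bw,\bz,\bxi;p) = (1+|\bz|^m)^{-1} F_{n,m}(\bw,\bxi;a)$. Now recall from the proof of Theorem \ref{full:thm} the partition-of-unity decomposition $p = \sum_{\bj,\bs} p^{(\bj,\bs)}$ with $p^{(\bj,\bs)}(\bw,\bz,\bxi) = \psi_\bj(\bw)\psi_\bs(\bz) a(\bw,\bxi)$ (note the $\bz$-localizing factor is genuinely present even though $a$ is $\bz$-independent); the corresponding operator is $h_1\chi_{\bl,R}\op^{\rm a}_1(p^{(\bj,\bs)}_\BT) h_2\chi_{\bn,R}$ with $\bx\in B(\bl,R)$, $\by\in B(\bn,R)$ and, since $\det\BT = 1$ and $t_{21} = -1$, $\bl = t_{22}\bj + \bs$, $\bn = \bj + t_{11}\bs$, hence $\bl-\bn = (t_{22}-1)\bj + (1-t_{11})\bs = -t_{21}\bj - t_{12}\bs + (\text{terms})$ — more simply $\bl - \bn = (t_{22}-t_{11}-1)\bj + \ldots$; the precise identity to record is that from \eqref{xy:eq}, $\bx - \by = \bz - \bw$ (as $\det\BT(\bx-\by) = \tau\bw - \bz = -\bz$ and $\bw = (1-t)\bx + t\by$, one checks $\bx-\by = \bw - \bz$ up to sign, i.e.\ $|\bx-\by|$ is comparable to $|\bw-\bz|$). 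The upshot: if $\dist(\supp h_1,\supp h_2)\ge r$, then the term $h_1\chi_{\bl,R}\op^{\rm a}_1(p^{(\bj,\bs)}_\BT)h_2\chi_{\bn,R}$ vanishes unless $|\bl-\bn|\gtrsim r$, which by the displayed relation forces $|\bj - \bs|\gtrsim r$ (with constants depending on $t_0,\d_0$), and in turn $|\bz|\gtrsim r$ on $\supp\psi_\bs$ relative to $\supp\psi_\bj$. On each such term the weight $(1+|\bz|^m)^{-1}$ contributes a factor $\lesssim (1+r)^{-m} \le r^{-m}$ that can be pulled out, while summing over the remaining pairs $\bj,\bs$ by \eqref{triangle:eq} costs, instead of the full $\1 F^\circ_{n,m}(a)\1_{1,q}^q$, only a partial sum — and here one uses that the number of lattice points $\bs$ with $|\bj-\bs|$ in a dyadic range near $r$ is $\lesssim r^d$; combined with the $\1\cdot\1_{1,q}$ structure this produces the factor $r^{d/q}$ after taking $q$-th roots. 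Multiplying, $r^{-m}\cdot r^{d/q} = r^{d/q - m}$, giving \eqref{symbol_sep:eq}.

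\textbf{Main obstacle.} The routine part is Part (1) and the bookkeeping of which $\BT$-entries appear. The genuinely delicate step is the geometric argument in Part (2): one must track carefully how $\dist(\supp h_1,\supp h_2)\ge r$ translates, through the linear change of variables $\BT$ and the cube/ball localizations of radius $R = 4t_0\sqrt d/\d_0$, into a lower bound $|\bz|\gtrsim r$ on the support of the relevant $p^{(\bj,\bs)}$ (equivalently $|\bw-\bz|\gtrsim r$, using $|\bx-\by|\asymp|\bw-\bz|$ from \eqref{xy:eq}), and then to count correctly: the number of admissible index pairs contributing at a given scale is $\lesssim r^d$, and this $r^d$ becomes $r^{d/q}$ only because the outer summation is in $\ell^q$, not $\ell^1$. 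Getting the exponent of $r$ to come out as exactly $d/q - m$ — rather than, say, $d - m$ or $d/q - mq$ — is where one has to be attentive to the interplay between the triangle inequality \eqref{triangle:eq} (which operates on $q$-th powers) and the pointwise weight in $P_{n,m}$. One also needs $m\ge n$ (hypothesis) so that, after extracting the decay, the surviving $\bxi$-derivatives up to order $m$ still dominate the finite-order expansion and $F^\circ_{n,m}(a)\in\plainl q(\plainL1)$ suffices with the same $n = [dq^{-1}]+1$.
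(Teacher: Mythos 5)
Part (1) of your proposal is correct and matches the paper's argument: choosing the $\BT$ from \eqref{tquant1:eq} gives $\tau = 0$, the $\bz$-dependence of $p$ is trivial, the weight $(1+|\bz|^m)^{-1}$ factors out of the lattice norm as a convergent $\sum_\bs (1+|\bs|)^{-nq}$ precisely because $nq > d$, and the constants are uniform in $t$.

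Part (2) contains a genuine gap, and also a confused geometric computation. On the geometry: with $\tau = 0$ and $\det\BT = 1$, the identity \eqref{xy:eq} gives $\bx - \by = -\bz$ (not $\bw - \bz$, and not ``$\bw-\bz$ up to sign'' as you wrote). Concretely, $\bl - \bn = -\bs$, so the disjointness of the supports of $h_1, h_2$ forces $|\bs| \gtrsim r$, not $|\bj - \bs| \gtrsim r$. The exponent-counting you sketch would in fact come out right after this correction, so that part is salvageable.

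The real gap is the hypothesis. Your route is: keep the amplitude $p(\bw,\bz,\bxi)=a(\bw,\bxi)$, decompose $p = \sum p^{(\bj,\bs)}$, discard the pairs with $|\bs| \lesssim r$, use the weight $(1+|\bz|^m)^{-1}$ in $P_{n,m}$ to harvest the factor $r^{-m}$, and then apply Theorem \ref{full:thm} with this $m$. But Theorem \ref{full:thm} with parameter $m$ requires the quantity $P_{n,m}(p)$, which sums $\nabla_\bxi^l$ over \emph{all} $l=0,\dots,m$. So you end up controlled by $\1 F_{n,m}(a)\1_{1,q}$, i.e.\ by all $\bxi$-derivatives of $a$ up to order $m$, not by $\1 F^\circ_{n,m}(a)\1_{1,q}$, which involves only the $m$-th $\bxi$-derivative. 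That is a strictly weaker conclusion than \eqref{symbol_sep:eq}, and the difference matters in the application (Theorem \ref{separate:thm}, where the $\rho^{-m}$ gain comes exactly from having only the top-order $\bxi$-derivative). The paper sidesteps this by \emph{not} invoking the weight in $P_{n,m}$: it inserts the harmless cutoff $g(\bw,\bz,\bxi) = \z(|\bz|r^{-1}) a(\bw,\bxi)$ (legitimate because $|\bz| = |\bx - \by| \ge r$ on the kernel's support), then integrates by parts $m$ times in $\bxi$ via $\CP^{(-)}_\bz = -i\bz\cdot\nabla_\bxi/|\bz|^2$, trading exactly $m$ $\bxi$-derivatives on $a$ for the factor $|\bz|^{-m}$, and \emph{then} applies Theorem \ref{full:thm} with the second index equal to $0$, so no further $\bxi$-derivatives appear. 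This is what makes the hypothesis $F^\circ_{n,m}(a)\in\plainl{q}(\plainL1)$ sufficient. Your argument as stated would not prove the theorem with that hypothesis.
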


\begin{proof}  
Use Theorem \ref{full:thm} with $p(\bw, \bz, \bxi) = a(\bw, \bxi)$ 
and the matrix 
\begin{equation}\label{newt:eq}
\BT = 
\begin{pmatrix}
1-t & t\\
-1 & 1
\end{pmatrix},
\end{equation}
so that $\tau = 0$, see \eqref{tau:eq}.  
By definitions \eqref{Pmn:eq} and \eqref{F:eq},
\begin{equation*}
P_{n, m}(\bw, \bz, \bxi; p)
\le \frac{F_{n, m}(\bw, \bxi; a)}{1+|\bz|^m}.
\end{equation*}
To estimate $\1 P_{n, m}(p)\1_{1, q}$ write for any $\bk, \bs, \bj\in\Z^d$:
\begin{equation*}
\int\limits_{\CC_\bk}
\int\limits_{\CC_\bs}
\int\limits_{\CC_\bj}
P_{n, m}(\bw, \bz, \bxi; p) 
d\bw d\bz d\bxi
\le C \frac{1}
{1 + |\bs|^m} 
\int\limits_{\CC_\bk}
\int\limits_{\CC_\bj}
F_{n, m}(\bw, \bxi; a) 
d\bw d\bxi. 
\end{equation*}
Consequently, for $m\ge n$,
\begin{equation*}
\1 P_{n, m}(p)\1_{1, q}^q
\le C\1 F_{n, m}(a) \1_{1, q}^q \sum_{\bs\in\Z^d} \frac{1}{1 + |\bs|^{mq}}
\le C' \1 F_{n, m}(a) \1_{1, q}^q.
\end{equation*}
Now Theorem \ref{full:thm} with $m = n$ implies \eqref{symbol:eq}. 

Proof of \eqref{symbol_sep:eq}. 
Let $\z\in\plainC\infty(\R)$ be a function 
such that  
\begin{equation}\label{zeta:eq}
\z(u) = 
\begin{cases}
1,\   |u| \ge 1;\\ 
0,\   |u|\le \frac{1}{2}.   
 \end{cases}
 \end{equation}
Note that 
\begin{equation*}
h_1 \op_{1, t}(a) h_2
= h_1 \op^{\rm a}_1 (g_\BT) h_2,\ 
g(\bw, \bz, \bxi) = \z(|\bz|r^{-1})
a(\bw, \bxi),
\end{equation*}
where the matrix $\BT$ is defined as in 
\eqref{newt:eq}. 
We use Theorem \ref{full:thm} again but in a slightly different way than above -- 
first we implement integration by parts similar to the one 
done in the proof of Lemma \ref{amplitude:lem}.  
Let $\CP^{(\pm)}_\bz =(\pm i\bz\cdot\nabla_{\bxi})|\bz|^{-2}$. Clearly,
$\CP^{(+)}_\bz e^{-i\bxi\cdot\bz} = e^{-i\bxi\cdot\bz}$, so,
integrating by parts $m$ times,
we get the following formula for the kernel of the operator $\op^{\rm a}_1(g_\BT)$:
\begin{equation*}
\frac{1}{(2\pi)^d} \int e^{i\bxi\cdot(\bx - \by)} g^{(m)}_\BT(\bx, \by, \bxi) d\bxi,
\end{equation*}
with
\begin{equation*}
g^{(m)}(\bw, \bz, \bxi) = 
\bigl(\CP^{(-)}_{\bz}\bigr)^m g(\bw, \bz, \bxi).
\end{equation*}
It is straightforward to see that
\begin{equation*}
P_{n, 0}(\bw, \bz, \bxi; g^{(m)})
\le C \frac{F^{\circ}_{n, m}(\bw, \bxi; a)}{r^m + |\bz|^m},
\end{equation*}
with a constant independent of $r$. 
Arguing as in the first part of the proof 
we get the bound
\begin{equation*}
\1 P_{n, 0}(g^{(m)})\1_{1, q}^q
\le C\1 F^{\circ}_{n, m}(a) \1_{1, q}^q 
\sum_{\bs\in\Z^d} \frac{1}{r^{mq} + |\bs|^{mq}}
\le C' \1 F^{\circ}_{n, m}(a) \1_{1, q}^q \ \ r^{d-mq}.
\end{equation*}
Theorem \ref{full:thm} with $m\ge n$ leads to  
\eqref{symbol_sep:eq}. 
\end{proof}

As the next Theorem shows, in the case $d = 1$, 
when $h_1$ and $h_2$ have disjoint supports, one can sometimes 
allow symbols $a$ depending only on $\xi$. Here and below 
we use $x$ and $\xi$ for one-dimensional variables. 

\begin{thm}\label{onedim_smooth:thm}
Let 
$h_1, h_2~\in~\plainl{\infty}(\plainL{2})(\R)$ be two functions such that 
\begin{equation*}
h_1(x) = 0,\ \ \textup{a.e.}\  x > -\frac{r}{2},\quad  h_2(x) = 0,
\ \ \textup{a.e.}\ x < \frac{r}{2}, 
\end{equation*}
with some $r\ge 1$. Let $q\in (0, 1]$ be some number, and let 
$h = [q^{-1}]+1$.  
Suppose that $a = a(\xi)$ satisfies the condition 
$\p^m a\in\plainl{q}(\plainL{1})(\R)$, for some $m\ge n$. 
Then we have 
\begin{equation}\label{symbol_sep_onedim:eq}
\|h_1 \op_1(a) h_2\|_{\GS_q} 
\le  C_{q, m} r^{\frac{1}{q} - m}\1 h_1\1_{2, \infty}
\1 h_2\1_{2, \infty}
\1 \p^m a \1_{1, q}.
\end{equation}
\end{thm}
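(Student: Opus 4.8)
The plan is to reduce the one-dimensional symbol $a=a(\xi)$ to the situation already covered by Theorem \ref{symbol:thm}(2) by splitting off the singular behaviour at $\xi=0$. The difficulty that prevents a direct application of the previous results is that $a$ depends only on $\xi$: the amplitude $g(\bw,\bz,\bxi)=a(\bxi)$ is constant in the spatial variable, so the quantities $F^{\circ}_{n,m}(a)$ are never in $\plainl{q}(\plainL1)(\R^{2d})$ — there is no spatial decay. The one-dimensional geometric hypothesis (the supports of $h_1$ and $h_2$ lie in the half-lines $\{x<-r/2\}$ and $\{x>r/2\}$) will be exploited to recover the missing decay via integration by parts in $\xi$, where the oscillatory factor $e^{i\xi(x-y)}$ has $|x-y|\ge r$ on the relevant region.

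First I would write, exactly as in the proof of \eqref{symbol_sep:eq}, $h_1\op_1(a)h_2 = h_1\op^{\rm a}_1(g_\BT)h_2$ with $g(w,z,\xi)=\z(|z|r^{-1})a(\xi)$ using a cutoff $\z$ as in \eqref{zeta:eq}, and the matrix $\BT$ of \eqref{newt:eq} (with $t$ arbitrary, since $a$ is $t$-independent), so $\tau=0$ and by \eqref{xy:eq} one has $(\det\BT)(x-y)=-z$, hence $|x-y|\ge cr$ on the support of $g$. Next I would integrate by parts $m$ times using $\CP^{(-)}_z=(-iz\cdot\nabla_\xi)|z|^{-2}$ (here $z$ is one-dimensional), which leaves $e^{-i\xi z}$ invariant and replaces the kernel symbol $g$ by $g^{(m)}(w,z,\xi)=(\CP^{(-)}_z)^m g(w,z,\xi)$. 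Because $g$ factorizes as $\z(|z|r^{-1})a(\xi)$, Leibniz's rule gives $g^{(m)}$ as a sum of terms of the form $c_{jl}\,z^{-(m+\cdots)}\,(\p^{l}\z)(|z|r^{-1})r^{-j}\cdot\p^{m}a(\xi)$ with $l\le m$; all derivatives of $\z$ are supported where $|z|\asymp r$, so one obtains the pointwise bound
\begin{equation*}
P_{n,0}(w,z,\xi;g^{(m)}) \le C\,\frac{|\p^m a(\xi)|}{r^m+|z|^m},
\end{equation*}
with a constant independent of $r$ (the derivatives in $w$ contribute nothing since $g^{(m)}$ is $w$-independent).

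Then I would compute the lattice quasi-norm $\1 P_{n,0}(g^{(m)})\1_{1,q}$ exactly as in the proof of \eqref{symbol_sep:eq}: integrating over unit cubes in the three variables and using $m q>1$ (which holds since $m\ge n\ge 1$ and, in dimension one, $n=[q^{-1}]+1$, consistent with the statement's $h=[q^{-1}]+1$),
\begin{equation*}
\1 P_{n,0}(g^{(m)})\1_{1,q}^q \le C\1\p^m a\1_{1,q}^q \sum_{s\in\Z}\frac{1}{r^{mq}+|s|^{mq}} \le C'\1\p^m a\1_{1,q}^q\; r^{1-mq},
\end{equation*}
the last step being the standard comparison of the sum with $\int (r^{mq}+|u|^{mq})^{-1}du \asymp r^{1-mq}$. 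Finally, applying Theorem \ref{full:thm} with this amplitude $g$ (and with $m\ge n$) yields $\|h_1\op_1(a)h_2\|_{\GS_q}\le C_{q,m}\1 h_1\1_{2,\infty}\1 h_2\1_{2,\infty}\1\p^m a\1_{1,q}\,r^{1/q-m}$, which is \eqref{symbol_sep_onedim:eq}. The one genuinely delicate point is verifying that the cutoff $\z(|z|r^{-1})$ does not spoil the $z^{-m}$ decay coming from the integrations by parts: one must check that the extra factors $r^{-1}(\p\z)(|z|r^{-1})$ etc. are harmless because they localize to $|z|\asymp r$ and there $r^{-1}\le C(r^m+|z|^m)^{-1}\cdot\text{(bounded)}$ after redistributing powers — this is the estimate that forces the $r^m+|z|^m$ (rather than just $|z|^m$) in the denominator and is precisely what produces the favourable power $r^{1/q-m}$ rather than something worse.
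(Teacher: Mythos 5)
Your overall strategy (integrate by parts in $\xi$ to exploit the $|x-y|\ge r$ separation, then invoke Theorem~\ref{full:thm}) is the right one, and your Leibniz-rule analysis of the cutoff $\z(|z|r^{-1})$ is correct. However, there is a genuine gap at the lattice-sum step, and it is exactly the difficulty you yourself flag in the opening paragraph: with the matrix $\BT$ of \eqref{newt:eq} one has $\bw=(1-t)x+ty$ and $\bz=y-x$, and your amplitude $g(\bw,\bz,\bxi)=\z(|\bz|r^{-1})a(\bxi)$ and hence $g^{(m)}$ and $P_{n,0}(\,\cdot\,;g^{(m)})$ are \emph{constant in $\bw$}. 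The quasi-norm $\1 P_{n,0}(g^{(m)})\1_{1,q}$ required by Theorem~\ref{full:thm} is taken over all of $\R^{3}$, i.e.\ over the lattice $\Z^3$ in the three variables $(\bw,\bz,\bxi)$. Your estimate
\begin{equation*}
\1 P_{n,0}(g^{(m)})\1_{1,q}^q \le C\1\p^m a\1_{1,q}^q \sum_{\bs\in\Z}\frac{1}{r^{mq}+|\bs|^{mq}}
\end{equation*}
only sums over the $\bz$- and $\bxi$-lattices; the $\bw$-sum is omitted, and because the summand is independent of $\bw$, including it makes the quasi-norm infinite. In other words, your proposed cutoff recovers decay in $\bz$ but does nothing about the missing decay in $\bw$, which is the very obstruction you identified. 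Integration by parts cannot supply it, because it only acts in the dual $\bxi$ variable.

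The paper resolves this differently: it keeps the amplitude in the original $(x,y,\bxi)$ variables (i.e.\ $\BT=\BI$), so that after integration by parts $g^{(m)}$ carries the factor $(x-y)^{-m}$, and then uses the specific geometric hypothesis --- $\supp h_1\subset(-\infty,-r/2]$, $\supp h_2\subset[r/2,\infty)$, which are on \emph{opposite sides} of the origin --- to convert $|x-y|^{-m}$ into $\bigl(|x|^m+|y|^m+r^m\bigr)^{-1}$ on the region where the kernel is nonzero. That denominator decays in $x$ and in $y$ \emph{separately}, which is what makes the lattice quasi-norm over $\Z^3$ finite. This use of the one-sided supports is the key step, and it has no counterpart in your argument. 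Note also that the generic hypothesis in Theorem~\ref{symbol:thm}(2) (only that $\dist(\supp h_1,\supp h_2)\ge r$) would \emph{not} suffice here precisely because it gives decay only in $x-y$; the stronger one-sided condition in Theorem~\ref{onedim_smooth:thm} is essential, and your proposal never uses it.
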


\begin{proof}
As in the proof of the previous theorem,
\begin{equation*}
h_1 \op_1(a) h_2 = h_1 \op_1^{\rm a} (g) h_2,\ g(x, y, \xi) = \z(|x-y|r^{-1})a(\xi),
\end{equation*}
where $\z$ is a sdefined in \eqref{zeta:eq}. Furthermore, 
integrating by parts $m$ times 
we get the following formula for the kernel:
\begin{equation*}
\frac{1}{2\pi} \int e^{i\xi(x-y)} g^{(m)}(x, y, \xi)d\xi,\ 
g^{(m)}(x, y, \xi) = i^m \frac{\p^m a(\xi)}{(x-y)^m}.
\end{equation*}
By definition of $h_1, h_2$ we obtain
\begin{equation*}
P_{n, 0}(x, y, \xi; g^{(m)})\le 
C\frac{|\p^m a(\xi)|}{|x|^m + |y|^m + r^m}.
\end{equation*}
Since $m\ge n = [q^{-1}]+1$, the right-hand side 
belongs to $\plainl{q}(\plainL1)(\R^3)$, and the quasi-norm 
is bounded from above by $\1 \p^m a\1_{1, q}$. Now the 
estimate \eqref{symbol_sep_onedim:eq} follows from Theorem 
\ref{full:thm}. 
\end{proof}

\subsection{Trace-class estimates} For $q = 1$ the lattice quasi-norms 
in Theorems \ref{full:thm} and \ref{symbol:thm} coincide with the 
standard $\plainL1$-norms. Due to the relative simplicity 
of these bounds 
it seems appropriate to write them out separately. 
Moreover making the change $\a\bxi = \bxi'$ we can immediately 
extend them to all values $\a\ge 1$:
 
\begin{thm}\label{trace_class_ampl:thm}
Suppose that $\BT\in GL(2, \R)$ satisfies \eqref{T:eq}. Let 
$h_1, h_2~\in~\plainl{\infty}(\plainL{2})(\R^d)$, 
and $P_{d+1,m}\in \plainL1(\R^{3d})$, 
with some $m = 0, 1, \dots$. Then for any $\a\ge 1$ we have
\begin{align*}
\| h_1 \op^{\rm a}_\a(p_{\BT}) h_2\|_{\GS_1}
\le &\ C_{m} \a^d \1 h_1\1_{2, \infty}\ \1 h_2\1_{2, \infty}\\[0.2cm]
&\ \quad\sum_{n_1, n_2 = 0}^{d+1}\sum_{l=0}^m\iiint 
\frac{|\nabla_{\bw}^{n_1}
\nabla_{\bz}^{n_2}\nabla_{\bxi}^l p(\bw, \bz, \bxi)|}
{1+|\tau \bw - \bz|^m} d\bw d\bz d \bxi,
\end{align*}
with a constant $C_{m} = C_{m}(t_0, \d_0)$.
\end{thm}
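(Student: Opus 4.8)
The plan is to obtain this statement as the special case $q = 1$ of Theorem \ref{full:thm}, combined with a rescaling of the frequency variable $\bxi$ to pass from $\a = 1$ to arbitrary $\a \ge 1$. First I would record the simplifications occurring at $q = 1$: the exponent in \eqref{n:eq} becomes $n = [d] + 1 = d + 1$, and by \eqref{brackh:eq} the lattice quasi-norm $\1\cdot\1_{1, 1}$ is just the $\plainL1$-norm on $\R^{3d}$, so that
\[
\1 P_{d+1, m}(p)\1_{1, 1} = \iiint P_{d+1, m}(\bw, \bz, \bxi; p)\, d\bw\, d\bz\, d\bxi,
\]
which, unravelling \eqref{Pmn:eq}, is precisely the iterated-integral sum appearing on the right-hand side of the claimed bound, with the $\a$-factor equal to $1$. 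Hence Theorem \ref{full:thm} already gives the assertion when $\a = 1$; the hypotheses $\BT \in GL(2, \R)$ with \eqref{T:eq} and $h_1, h_2 \in \plainl{\infty}(\plainL{2})(\R^d)$ are exactly those assumed here.

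Next I would reduce general $\a \ge 1$ to $\a = 1$. Substituting $\bxi = \a^{-1}\bxi'$ in the oscillatory integral \eqref{ampl1:eq} and absorbing the Jacobian $\a^{-d}$ into the prefactor $(\a/2\pi)^d$ shows that
\[
\op^{\rm a}_\a(p_\BT) = \op^{\rm a}_1(\tilde p_\BT), \qquad \tilde p(\bw, \bz, \bxi) := p(\bw, \bz, \a^{-1}\bxi),
\]
with the matrix $\BT$ — hence $\tau$ and the passage $p \mapsto p_\BT$ — unchanged, so that $\tilde p$ again satisfies the structural assumptions. Applying the $\a = 1$ case already established, now to the amplitude $\tilde p$, yields
\[
\| h_1 \op^{\rm a}_\a(p_\BT) h_2\|_{\GS_1} \le C_m \1 h_1\1_{2, \infty}\1 h_2\1_{2, \infty} \iiint P_{d+1, m}(\bw, \bz, \bxi; \tilde p)\, d\bw\, d\bz\, d\bxi .
\]

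It then remains to rewrite the right-hand integral in terms of $p$. Differentiation in $\bw, \bz$ commutes with the frequency rescaling, while $\nabla_{\bxi}^l \tilde p(\bw, \bz, \bxi) = \a^{-l}(\nabla_{\bxi}^l p)(\bw, \bz, \a^{-1}\bxi)$; since $\a^{-l}\le 1$ for $\a \ge 1$ and $l \ge 0$, definition \eqref{Pmn:eq} gives the pointwise bound
\[
P_{d+1, m}(\bw, \bz, \bxi; \tilde p) \le \frac{1}{1 + |\tau\bw - \bz|^m}\sum_{n_1, n_2 = 0}^{d+1}\sum_{l = 0}^m \bigl|(\nabla_{\bw}^{n_1}\nabla_{\bz}^{n_2}\nabla_{\bxi}^l p)(\bw, \bz, \a^{-1}\bxi)\bigr| .
\]
Integrating in $\bxi$ and changing variables back via $\a^{-1}\bxi \mapsto \bxi$ contributes the factor $\a^d$ and turns the integral into exactly the sum of triple integrals of $|\nabla_{\bw}^{n_1}\nabla_{\bz}^{n_2}\nabla_{\bxi}^l p|/(1 + |\tau\bw - \bz|^m)$ in the statement (this also exhibits the finiteness needed to legitimise the previous display); integrating also in $\bw, \bz$ finishes the proof. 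There is no genuine obstacle: the only point requiring even mild care is that the constant stays independent of $\a$, which is secured precisely by the inequalities $\a^{-l}\le 1$ used above.
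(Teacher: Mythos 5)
Your proposal is correct and follows exactly the approach the paper indicates: specialize Theorem \ref{full:thm} to $q=1$ (so $n = d+1$ and $\1\cdot\1_{1,1}$ is the $\plainL1$-norm), then rescale the frequency variable $\bxi\mapsto\a^{-1}\bxi$ to pass from $\a=1$ to arbitrary $\a\ge 1$, using $\a^{-l}\le 1$ to control the extra factors from $\nabla_\bxi^l$. You have simply written out in detail the computation the paper compresses into a single sentence preceding the theorem.
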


\begin{thm}\label{trace_class_symbol:thm} 
 Let 
$h_1, h_2~\in~\plainl{\infty}(\plainL{2})(\R^d)$,\ and $\a\ge 1$.
 \begin{enumerate}
 \item 
 Suppose that $F_{d+1, d+1}(a)\in \plainL1(\R^{2d})$. 
Then for any $t\in [0, 1]$ we have  
\begin{equation*}
\|h_1 \op_{\a, t}(a) h_2\|_{\GS_1} 
\le  C \a^d \1 h_1\1_{2, \infty}
\1 h_2\1_{2, \infty}
\sum_{k, l=0}^{d+1}\iint  |\nabla_{\bw}^k\nabla_{\bxi}^l a(\bw, \bxi)| d\bw d\bxi.
\end{equation*}
\item
Suppose that 
the distance between the supports of the functions $h_1, h_2$ is 
at least $r\ge 1$. If 
$F^{\circ}_{d+1, m}(a) \in \plainL1(\R^{2d})$, 
$m \ge d+1$, then for any $t\in [0, 1]$ 
we have 
\begin{equation*}
\|h_1 \op_{\a, t}(a) h_2\|_{\GS_1} 
\le  C_{ m} (\a r)^{d - m}\1 h_1\1_{2, \infty}
\1 h_2\1_{2, \infty}
\sum_{k=0}^{d+1}\iint  |\nabla_{\bw}^k\nabla_{\bxi}^m a(\bw, \bxi)| d\bw d\bxi.
\end{equation*}
\end{enumerate}
The constants $C$ and $C_{m}$ do not depend on $t\in [0, 1]$.
\end{thm}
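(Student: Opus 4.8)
The plan is to deduce Theorem \ref{trace_class_symbol:thm} from Theorem \ref{symbol:thm} by the elementary rescaling argument already announced in the text. Fix $\a\ge 1$ and set $\bxi = \a^{-1}\bxi'$, i.e. make the substitution $\bxi' = \a\bxi$ in the oscillatory integral \eqref{tquant:eq}. Writing $a_\a(\bx, \bxi') = a(\bx, \a^{-1}\bxi')$ one checks directly from \eqref{tquant:eq} that
\[
\op_{\a, t}(a) = \a^d\,\op_{1, t}(a_\a),
\]
the factor $\a^d$ coming from the prefactor $(\a/2\pi)^d$ against the Jacobian $\a^{-d}$ of the change of variables, and the phase $\a(\bx-\by)\cdot\bxi$ becoming $(\bx-\by)\cdot\bxi'$. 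Hence $h_1\op_{\a, t}(a)h_2 = \a^d\, h_1\op_{1, t}(a_\a)h_2$ and $\|h_1\op_{\a, t}(a)h_2\|_{\GS_1} = \a^d\,\|h_1\op_{1,t}(a_\a)h_2\|_{\GS_1}$.

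For part (1) I would apply Theorem \ref{symbol:thm}(1) with $q = 1$, in which case \eqref{n:eq} gives $n = [d]+1 = d+1$, the lattice quasi-norm $\1\cdot\1_{1,1}$ is the ordinary $\plainL1(\R^{2d})$-norm, and $F_{d+1, d+1}$ is the sum in \eqref{F:eq}. Thus
\[
\|h_1\op_{1,t}(a_\a)h_2\|_{\GS_1}\le C\,\1 h_1\1_{2,\infty}\1 h_2\1_{2,\infty}
\sum_{k, l=0}^{d+1}\iint |\nabla_{\bw}^k\nabla_{\bxi'}^l a_\a(\bw,\bxi')|\,d\bw\,d\bxi'.
\]
Each $\nabla_{\bxi'}^l a_\a = \a^{-l}(\nabla_{\bxi}^l a)(\bw, \a^{-1}\bxi')$, so after the inverse change $\bxi = \a^{-1}\bxi'$ the integral picks up $\a^d$ from the Jacobian and $\a^{-l}\le 1$ from the $\bxi$-derivatives, whence the right-hand side is bounded by $C\,\a^d\,\1 h_1\1_{2,\infty}\1 h_2\1_{2,\infty}\sum_{k,l=0}^{d+1}\iint|\nabla_{\bw}^k\nabla_{\bxi}^l a(\bw,\bxi)|\,d\bw\,d\bxi$; combining with $\op_{\a, t}(a)=\a^d\op_{1,t}(a_\a)$ and noting $\a^d\cdot\a^d$ would be wrong — one must be careful that the $\a^d$ from the scaling of the operator and the $\a^d$ from the Jacobian in the symbol norm are the same factor, not a product. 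Tracking this bookkeeping carefully, the net power of $\a$ in the final estimate is $\a^d$, as claimed. For part (2) the argument is identical but uses Theorem \ref{symbol:thm}(2): the distance between the supports of $h_1, h_2$ is unaffected by the $\bxi$-scaling (it is an $\bx$-space statement), so with $r\ge 1$ and $m\ge d+1$ we get the factor $r^{d-m}$ from \eqref{symbol_sep:eq}; the symbol side contributes only $\bxi$-derivatives of order exactly $m$, each producing $\a^{-m}$, together with the Jacobian $\a^d$, and the operator scaling $\a^d$, yielding the combined factor $(\a r)^{d-m}$ after the same careful cancellation.

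The only real subtlety is the bookkeeping of powers of $\a$ just flagged: it is tempting to double-count the $\a^d$. The clean way to organise it is to apply Theorem \ref{symbol:thm} directly to $a_\a$ without undoing the scaling, treating $F_{d+1,d+1}(a_\a)$ and $F^{\circ}_{d+1,m}(a_\a)$ as functions on $\R^{2d}$ in their own right; one then evaluates $\1 F_{d+1,d+1}(a_\a)\1_{1,1} = \|F_{d+1,d+1}(a_\a)\|_{\plainL1}$ by the substitution $\bxi=\a^{-1}\bxi'$, which contributes exactly one factor $\a^d$ (and the harmless $\a^{-l}$, $\a^{-m}$ from differentiating in $\bxi$), and multiplies by the external $\a^d$ from $\op_{\a,t}(a)=\a^d\op_{1,t}(a_\a)$ — wait, that is again two factors. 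The resolution: \eqref{tquant:eq} with the prefactor $(\a/2\pi)^d$ means the rescaled operator is $\op_{\a,t}(a) = \op_{1,t}(a_\a)$ with \emph{no} external power of $\a$, because the $(\a/2\pi)^d$ is already the correct normalisation and only the symbol gets rescaled; the single $\a^d$ then enters solely through $\|F(a_\a)\|_{\plainL1} = \a^d\|(\text{derivatives of }a)\|_{\plainL1}$. I would verify this normalisation carefully at the start, state it as the displayed identity, and the rest is routine.

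\begin{proof}
Both statements follow from Theorem \ref{symbol:thm} by rescaling. Making the substitution $\bxi' = \a\bxi$ in the integral \eqref{tquant:eq}, one finds the identity
\[
\op_{\a, t}(a) = \op_{1, t}(a_\a),\qquad a_\a(\bx, \bxi') := a(\bx, \a^{-1}\bxi'),
\]
valid for any $\a\ge 1$ and $t\in[0,1]$; indeed the prefactor $(\a/2\pi)^d$ against the Jacobian $\a^{-d}\,d\bxi'$ of the change of variables gives $(2\pi)^{-d}$, while the phase $\a(\bx-\by)\cdot\bxi$ becomes $(\bx-\by)\cdot\bxi'$. Consequently
\[
\|h_1 \op_{\a, t}(a) h_2\|_{\GS_1} = \|h_1 \op_{1, t}(a_\a) h_2\|_{\GS_1}.
\]

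For part (1) apply Theorem \ref{symbol:thm}(1) with $q = 1$, so that $n = [d]+1 = d+1$ by \eqref{n:eq}, the quasi-norm $\1\cdot\1_{1, 1}$ is the $\plainL1(\R^{2d})$-norm, and by \eqref{F:eq}
\[
\|h_1 \op_{1, t}(a_\a) h_2\|_{\GS_1}\le C\,\1 h_1\1_{2, \infty}\1 h_2\1_{2, \infty}
\sum_{k, l=0}^{d+1}\iint |\nabla_{\bw}^k \nabla_{\bxi'}^l a_\a(\bw, \bxi')|\, d\bw\, d\bxi'.
\]
Since $\nabla_{\bxi'}^l a_\a(\bw, \bxi') = \a^{-l}(\nabla_{\bxi}^l a)(\bw, \a^{-1}\bxi')$, the substitution $\bxi = \a^{-1}\bxi'$ turns the integral into $\a^{d-l}\iint|\nabla_{\bw}^k\nabla_{\bxi}^l a(\bw, \bxi)|\,d\bw\,d\bxi$, and $\a^{d-l}\le\a^d$ because $\a\ge 1$ and $l\ge 0$. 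This proves the bound claimed in Theorem \ref{trace_class_symbol:thm}(1).

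For part (2) the distance between the supports of $h_1$ and $h_2$ is a statement about $\bx$-space and is not affected by the rescaling in $\bxi$; thus, with $r\ge 1$ and $m\ge d+1$, Theorem \ref{symbol:thm}(2) with $q = 1$ gives
\[
\|h_1 \op_{1, t}(a_\a) h_2\|_{\GS_1}\le C_{m}\, r^{d - m}\,\1 h_1\1_{2, \infty}\1 h_2\1_{2, \infty}
\sum_{k=0}^{d+1}\iint |\nabla_{\bw}^k \nabla_{\bxi'}^m a_\a(\bw, \bxi')|\, d\bw\, d\bxi'.
\]
As above, $\nabla_{\bxi'}^m a_\a(\bw, \bxi') = \a^{-m}(\nabla_{\bxi}^m a)(\bw, \a^{-1}\bxi')$, and the substitution $\bxi = \a^{-1}\bxi'$ produces a factor $\a^{d-m}$ on each term of the sum. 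Combining, we obtain
\[
\|h_1 \op_{\a, t}(a) h_2\|_{\GS_1}\le C_{m}\,(\a r)^{d - m}\,\1 h_1\1_{2, \infty}\1 h_2\1_{2, \infty}
\sum_{k=0}^{d+1}\iint |\nabla_{\bw}^k \nabla_{\bxi}^m a(\bw, \bxi)|\, d\bw\, d\bxi,
\]
which is Theorem \ref{trace_class_symbol:thm}(2). In both cases the constants depend only on $d$ (and on $m$ in part (2)) and not on $t\in[0,1]$ or $\a$, since the constants supplied by Theorem \ref{symbol:thm} have this property.
\end{proof}
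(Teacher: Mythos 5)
Your final proof block is correct and follows exactly the route the paper indicates (namely, the change of variables $\bxi' = \a\bxi$ combined with Theorem \ref{symbol:thm} at $q=1$, where the lattice quasi-norm reduces to the $\plainL1$-norm and $n = d+1$). The preamble wavers on whether the identity is $\op_{\a,t}(a) = \a^d\op_{1,t}(a_\a)$ or $\op_{\a,t}(a) = \op_{1,t}(a_\a)$, but you land on the correct second form and the bookkeeping of $\a^{d-l}$ and $\a^{d-m}$ in the final argument is exact.
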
 
 
For $\BT = \BI$ and $t=0, 1$ the above estimates were obtained in 
\cite{Sob}.

\section{Amplitudes from classes $\BS^{(n_1, n_2, m)}$:
semi-classical estimates}\label{semi_class:sect}

\subsection{Compactly supported amplitudes/symbols}
Now we proceed to estimates for arbitrary $q\in (0, 1]$ 
for the operators containing the parameter $\a>0$.  
Due to the nature of the bounds derived in the previous section 
we do not expect the semi-classical bounds to look as simple as 
in Theorems \ref{trace_class_ampl:thm} and \ref{trace_class_symbol:thm}.
Thus we do not try to find integral bounds but instead we concentrate 
on the scaling properties of the $\GS_q$-estimates. 
For arbitrary numbers $\ell>0$ and $\rho >0$ introduce the following norms:
\begin{equation}\label{norm:eq}
\SN^{(n_1, n_2, m)}(p; \ell, \rho)
= \underset{\substack
{0\le n\le n_1\\
0\le k\le n_2\\
0\le r\le m}}
\max \ \underset{\bw, \bz, \bxi}
\sup \ell^{n+k} \rho^{r}
|\nabla_{\bw}^{n}\nabla_{\bz}^k\nabla_{\bxi}^r p(\bw, \bz, \bxi)|.
\end{equation}
We say that $p$ belongs to the class $\BS^{(n_1, n_2, m)}$ if the norm 
\eqref{norm:eq} is finite for some (and hence for all) positive $\ell, \rho$. 
For a symbol $a = a(\bw, \bxi)$ 
(resp. function $a = a(\bxi)$)  
we use the notation $\SN^{(n, m)}(a; \ell, \rho)$ (resp. $\SN^{(m)}(a; \rho)$). Accordingly, we define classes $\BS^{(n, m)}$ and $\BS^{(m)}$.   
The presence of the parameters $\ell$, $\rho$ allows one to consider
amplitudes and symbols 
with different scaling properties. 

Let $U_\ell$ be the unitary operator on $\plainL2(\R^d)$ 
defined by 
\begin{equation*}
(U_\ell u)(\bx) = \ell^{\frac{d}{2}} u(\ell \bx).
\end{equation*}
Then a straightforward calculation gives for any $\ell, \rho >0$ 
the following unitary equivalence:
\begin{equation}\label{unitary:eq}
U_{\ell}\op_\a^{\rm a} (p_\BT) U_{\ell}^{-1} 
= \op^{\rm a}_{\b}(p^{(\ell, \rho)}_\BT), \
p^{(\ell, \rho)}(\bw, \bz, \bxi) = p(\ell\bw, \ell\bz, \rho\bxi),\
\b = \a\ell\rho.
\end{equation}
The norms \eqref{norm:eq} are also invariant:
\begin{equation}\label{scale:eq}
\SN^{(n_1, n_2, m)}(p; \ell, \rho)
= \SN^{(n_1, n_2, m)}(p^{(\ell_1, \rho_1)}; \ell\ell_1^{-1}, \rho\rho_1^{-1}),
\end{equation}
for arbitrary positive $\ell, \ell_1, \rho, \rho_1$. 

The operators $\op_\a^{\rm a}(p_{\BT})$ transform in a standard way under Euclidean 
isometries ( i.e. orthogonal 
transformations and shifts), their norms \eqref{norm:eq} remain 
invariant.  We use these facts regularly without introducing formal notation for 
these transformations. 
 
All the $\GS_q$-bounds below will be derived under
the following conditions on the amplitudes or symbols. 
For the operator $\op_\a^{\rm a}(p_\BT)$ we assume that
\begin{equation}\label{p_support1:eq}
\textup{the support of }\ \ p = p(\bw, \bz, \bxi)\ \
\textup{is contained in} \ \
B(\bu, \ell)\times \R^d\times B(\bmu, \rho),
\end{equation}
with some $\bu, \bmu\in\R^d$ and some $\ell>0, \rho>0$. 
For the  $t$-operators $\op_{\a, t}(a)$ we assume that
\begin{equation}\label{a_support:eq}
\textup{the support of $a = a(\bw, \bxi)$ }\
\textup{is contained in} \ \
B(\bu, \ell)\times B(\bmu, \rho).
\end{equation}
In what follows most of the bounds are obtained under the assumption that 
$\a\ell\rho\ge \ell_0$ with some fixed positive number $\ell_0$. 
The constants featuring in all the estimates below are independent of the 
symbols involved as well as of the parameters  
$\bu, \bmu, \a, \ell, \rho$ but may depend 
on the constant $\ell_0$. 

\begin{thm}\label{general_trace:thm} 
Let $\BT\in GL(2, \R)$ be a matrix satisfying \eqref{T:eq}, and 
let $s, t\in [0, 1]$.  
Let $q\in (0, 1]$  and $\a\ell\rho\ge \ell_0$. 
Let $p\in \BS^{(n, n, n)}$, with $n$ defined in \eqref{n:eq}, 
be an amplitude satisfying the condition \eqref{p_support1:eq}, 
and let $a\in \BS^{(n, n)}$ be
a symbol satisfying the condition \eqref{a_support:eq}. 
Then $\op_\a^{\rm a}(p_\BT)\in\GS_q$, 
$\op_{\a, t}(a)\in\GS_q$, and
\begin{equation}\label{trace_ampl:eq}
\|\op_\a^{\rm a}(p_\BT)\|_{\GS_q}\le C_q (\a\ell\rho)^{\frac{d}{q}}
\SN^{(n, n, n)}(p; \ell, \rho),
\end{equation}
with a constant $C_q = C_q(t_0, \d_0)$ (see  \eqref{t0:eq}), and 
\begin{equation}\label{trace_symbol:eq}
\|\op_{\a, t} (a)\|_{\GS_q} 
\le C_q (\a\ell\rho)^{\frac{d}{q}} \SN^{(n, n)}(a; \ell, \rho),
\end{equation}
with a constant $C_q$ independent of $t$. 
If, in addition $a\in \BS^{(n, n+1)}$ then 
\begin{equation}\label{l-r:eq}
 \| \op_{\a, t}(a) - \op_{\a, s}(a)\|_{\GS_q}
 \le C_q(\a\ell\rho)^{\frac{d}{q}-1} \SN^{(n, n+1)}(a; \ell, \rho),
\end{equation}
with a constant $C_q$ independent of $s, t$.
\end{thm}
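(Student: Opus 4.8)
The plan is to obtain all three inequalities from Theorems~\ref{full:thm} and~\ref{symbol:thm} by combining them with the scaling relations \eqref{unitary:eq}--\eqref{scale:eq} and the compact-support hypotheses \eqref{p_support1:eq}, \eqref{a_support:eq}, using the trivial weights $h_1=h_2\equiv1$, for which $\1 1\1_{2,\infty}=1$. First I would use \eqref{unitary:eq} together with the substitution $\bxi\mapsto\rho\b^{-1}\bxi$ in the oscillatory integral \eqref{ampl1:eq} (which leaves the $\GS_q$-quasi-norm unchanged) to reduce $\op^{\rm a}_\a(p_\BT)$ to $\op^{\rm a}_1(\tilde p_\BT)$, where $\b:=\a\ell\rho$ and $\tilde p(\bw,\bz,\bxi):=p(\ell\bw,\ell\bz,\rho\b^{-1}\bxi)$. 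By \eqref{p_support1:eq} this $\tilde p$ is supported in a set of the form $B(\,\cdot\,,1)\times\Rd\times B(\,\cdot\,,\b)$, and by the chain rule together with \eqref{scale:eq} one has $|\nabla_\bw^{n_1}\nabla_\bz^{n_2}\nabla_\bxi^l\tilde p|\le\b^{-l}\,\SN^{(n,n,n)}(p;\ell,\rho)$ whenever $n_1,n_2,l\le n$. Then I would apply Theorem~\ref{full:thm} to $\op^{\rm a}_1(\tilde p_\BT)$ with $m=n$, so that $\|\op^{\rm a}_\a(p_\BT)\|_{\GS_q}\le C_q\1 P_{n,n}(\tilde p)\1_{1,q}$, and estimate this lattice quasi-norm cube by cube: only $O(1)$ of the $\bw$-cubes meet $\supp\tilde p$, at most $C(\ell_0)\b^d$ of the $\bxi$-cubes do (this is where the hypothesis $\b\ge\ell_0$ is used, to absorb an additive constant into $C\b^d$), and the weight $(1+|\bz-\tau\bw|^n)^{-1}$ produces a convergent sum over the $\bz$-cubes because $n=[dq^{-1}]+1$ forces $nq>d$. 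Together with the derivative bound above this yields $\1 P_{n,n}(\tilde p)\1_{1,q}\le C_q\b^{d/q}\,\SN^{(n,n,n)}(p;\ell,\rho)$, i.e.\ \eqref{trace_ampl:eq}, with $C_q=C_q(t_0,\d_0)$ coming from the constant $C_{q,n}$ of Theorem~\ref{full:thm}.

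Estimate \eqref{trace_symbol:eq} is then a special case of \eqref{trace_ampl:eq}, applied to the amplitude $p(\bw,\bz,\bxi)=a(\bw,\bxi)$ and the matrix $\BT=\begin{pmatrix}1-t&t\\-1&1\end{pmatrix}$ of \eqref{tquant1:eq}: since $p$ does not depend on $\bz$, hypothesis \eqref{p_support1:eq} reduces to \eqref{a_support:eq} and $\SN^{(n,n,n)}(p;\ell,\rho)=\SN^{(n,n)}(a;\ell,\rho)$, while the entries of this $\BT$ are bounded by $1$ and $|\det\BT|=1$ for every $t\in[0,1]$, so the constant in \eqref{trace_ampl:eq} may be taken independent of $t$.

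For the last estimate \eqref{l-r:eq} I would turn the extra $\bxi$-smoothness into a factor $(\a\ell\rho)^{-1}$. Differentiating \eqref{tquant:eq} in the quantization parameter and integrating by parts once in $\bxi$ gives the standard formula $\frac{d}{d\sigma}\op_{\a,\sigma}(a)=\frac1{i\a}\op_{\a,\sigma}(c)$ with $c=\sum_j\p_{w_j}\p_{\xi_j}a$, hence
\[
\op_{\a,t}(a)-\op_{\a,s}(a)=\op^{\rm a}_\a(r_\BT),\qquad r(\bw,\bz,\bxi)=\frac1{i\a}\int_s^t c\bigl(\bw+(\sigma-t)\bz,\bxi\bigr)\,d\sigma,
\]
with $\BT=\begin{pmatrix}1-t&t\\-1&1\end{pmatrix}$, so that $\bz=\by-\bx$ and $\tau=0$. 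I would then feed $\op^{\rm a}_\a(r_\BT)$ into the same scaling-and-lattice machinery used for \eqref{trace_ampl:eq}: the explicit prefactor $(i\a)^{-1}$, together with the factor $\rho\b^{-1}$ picked up by the $\bxi$-derivative sitting inside $c$ when it passes through the rescaling, accounts for the drop from $(\a\ell\rho)^{d/q}$ to $(\a\ell\rho)^{d/q-1}$.

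The book-keeping in the first two paragraphs is routine; the hard part will be the last one. Two difficulties have to be reconciled there. The amplitude $r$ is no longer compactly supported in $\bw$ — for fixed $\bz$ it is supported only in $\bw\in B(\bu,\ell+|\bz|)$ — which pushes one towards a large value of $m$ in $P_{n,m}(r)$ to gain enough decay in $\bz$; but each additional $\bxi$-integration by parts built into $P_{n,m}$ costs a further $\bxi$-derivative of $c$, hence of $a$, so a large $m$ threatens to force a symbol class stronger than $\BS^{(n,n+1)}$. Matching the stated right-hand side $\SN^{(n,n+1)}(a;\ell,\rho)$ therefore requires exploiting the near-cancellation $c(\bw+(\sigma-t)\bz,\bxi)=c(\bw,\bxi)+O(|\bz|)$ (and the analogous expansions of its derivatives), so that the $\bz$-decay is genuinely available without spending $\bxi$-regularity, and being careful that the $\bz$-derivatives occurring in $P_{n,m}(r)$ do not silently raise the $\bw$-order of $a$. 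This is the step I would want to verify most carefully.
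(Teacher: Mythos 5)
Your treatment of \eqref{trace_ampl:eq} and \eqref{trace_symbol:eq} matches the paper's proof: rescale via \eqref{unitary:eq}--\eqref{scale:eq} to reduce to $\a=1$ (the paper takes $\ell_1=(\a\rho)^{-1}$, $\rho_1=\rho$, so that $\rho=1$ and $\ell\ge\ell_0$ is free, but your variant is equivalent), apply Theorem~\ref{full:thm} with $h_1=h_2\equiv1$ and $m=n$, and count lattice cubes using the support condition and $nq>d$. No issues there.

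For \eqref{l-r:eq} you take a genuinely different route, and the difficulty you flag at the end is real; as sketched, the approach does not close under the hypothesis $a\in\BS^{(n,n+1)}$. Write out the derivatives of your amplitude $r(\bw,\bz,\bxi)=\frac1{i\a}\int_s^t c(\bw+(\sigma-t)\bz,\bxi)\,d\sigma$: since $\bz$ enters only through the composite argument $\bw+(\sigma-t)\bz$, one has $\nabla_\bw^{n_1}\nabla_\bz^{n_2}\nabla_\bxi^l r=\frac1{i\a}\int_s^t(\sigma-t)^{n_2}\bigl(\nabla_1^{n_1+n_2}\nabla_2^{l}c\bigr)(\bw+(\sigma-t)\bz,\bxi)\,d\sigma$. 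Since $c=\sum_j\p_{w_j}\p_{\xi_j}a$, to control $P_{n,m}(r)$ (needed in Theorem~\ref{full:thm}, with $m\ge n+1$ to tame the non-compact $\bz$-support) you need $\nabla_\bw^{n_1+n_2+1}\nabla_\bxi^{l+1}a$ with $n_1,n_2\le n$, $l\le m$, i.e.\ up to $2n+1$ $\bw$-derivatives and $m+1\ge n+2$ $\bxi$-derivatives. This forces $a\in\BS^{(2n+1,\,n+2)}$, strictly stronger in both indices than the stated $\BS^{(n,n+1)}$. The near-cancellation $c(\bw+(\sigma-t)\bz,\bxi)=c(\bw,\bxi)+O(|\bz|)$ does not rescue this, because it buys $\bz$-decay only at the cost of yet another $\bw$-derivative of $c$.

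The paper dodges this entirely by choosing a different amplitude/matrix pair. It writes $\op_{\a,t}(a)-\op_{\a,s}(a)=\op^{\rm a}_\a(g_\BS)$ with $g(\bw,\bz,\bxi)=a(\bw,\bxi)-a(\bz,\bxi)$ and $\BS=\begin{pmatrix}1-t&t\\1-s&s\end{pmatrix}$ (so $\bw=(1-t)\bx+t\by$, $\bz=(1-s)\bx+s\by$, $g_\BS=p_\BT-q_{\BT'}$). Because $g$ is a \emph{difference of functions of $\bw$ alone and of $\bz$ alone}, the mixed derivative $\nabla_\bw^{n_1}\nabla_\bz^{n_2}g$ vanishes unless $n_1=0$ or $n_2=0$, and in the surviving cases it equals $\pm\nabla^{k}a$ with $k=\max(n_1,n_2)\le n$ --- never a sum $n_1+n_2$. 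The extra factor $(\a\ell\rho)^{-1}$ then comes for free from the mean-value theorem applied to $g$ itself, $|g|\le\ell^{-1}\SN\cdot|\bw-\bz|$, while $\bxi$-derivatives are only needed up to $m=n+1$. This is precisely why $\BS^{(n,n+1)}$ suffices. The only extra care is that $\det\BS=s-t$ can be small, handled by assuming $|s-t|\ge1/4$ (so that $\d_0$ in \eqref{t0:eq} is uniform) and, for $|s-t|<1/4$, inserting an intermediate $u\in[0,1]$ with $|s-u|,|t-u|\ge1/4$ and applying \eqref{triangle:eq}. You should replace your $\frac{d}{d\sigma}$-argument with this construction.
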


\begin{proof}
The estimate \eqref{trace_symbol:eq} is a special case of \eqref{trace_ampl:eq} 
with the matrix $\BT$ defined in \eqref{tquant1:eq}. 

Without loss of generality we may assume that 
$\bu = \bmu = \bzero$.
Furthermore, 
using \eqref{scale:eq} and \eqref{unitary:eq} 
with $\ell_1 = (\a\rho)^{-1}, \rho_1 = \rho$, we see that it suffices
to prove the sought inequalities for $\a = 1$, $\rho = 1$ and arbitrary 
$\ell\ge \ell_0$ with a fixed $\ell_0 >0$. 
 
Proof of \eqref{trace_ampl:eq}. 
We use Theorem  \ref{full:thm} with $h_1 = h_2 = 1$ and $m=n$. 
Assume without loss of generality that $\SN^{(n, n, n)}(p; \ell, 1) = 1$. 
As $\ell\ge \ell_0$, we have 
\begin{equation*}
 P_{n, n}(\bw, \bz, \bxi; p)\le C 
\frac{\chi_{\bzero, \ell}(\bw)\chi_{\bzero, 1}(\bxi)}{1+|\tau \bw - \bz|^n}, 
\end{equation*}
and hence, for any $\bk, \bs, \bj\in\Z^d$ we have 
\begin{equation*}
\int\limits_{\CC_\bk}\int\limits_{\CC_\bs}
\int\limits_{\CC_\bj}
P_{n, n}(\bw, \bz, \bxi; p) 
d\bw d\bz d\bxi
\le C \frac{\chi_{\bzero, R\ell}(\bj)\chi_{\bzero, 2\sqrt d}(\bk)}
{1 + |\tau\bj-\bs|^n}, 
\end{equation*}
where $R = R(\ell_0) = \ell_0^{-1}\sqrt d + 1$. 
As a consequence,
\begin{equation*}
\1 P_{n, m}(p)  \1_{1, q}\le 
C\biggl(\sum_{|\bj|\le R\ell} \sum_{\bs} \frac{1}{1+ |\tau\bj-\bs|^{nq}}
\biggr)^{\frac{1}{q}}\le C\ell^{\frac{d}{q}},\ C = C(\ell_0), 
\end{equation*}
as $n = [dq^{-1}] + 1> dq^{-1}$. This leads to \eqref{trace_ampl:eq}.

Proof of \eqref{l-r:eq}. We use Theorem \ref{full:thm} with 
$h_1 = h_2 = 1$ and $m = n+1$. 
Without loss of generality assume temporarily 
that 
$\SN^{(n, n+1)}(a) = 1$. 
Rewrite the difference on the left-hand side of \eqref{l-r:eq} 
in the form
\begin{equation*}
\op_{\a, t}(a) - \op_{\a, s}(a)
= \op^{\rm a}_{\a}(g_\BS),
\end{equation*}
with $g(\bw, \bz) = a(\bw, \bxi) - a(\bz, \bxi)$ and the matrix
\begin{equation*}
\BS = 
\begin{pmatrix}
1-t & t\\
1-s & s
\end{pmatrix}.
\end{equation*}
Note that $\det\BS = s-t$, and assume that $|s-t|\ge 1/4$. 
For all $n_1, n_2\le n, l\le n+1$ we have 
\begin{align*} 
|\nabla_{\bw}^{n_1} \nabla_{\bz}^{n_2} \nabla_{\bxi}^l 
g(\bw, \bz, \bxi)|
\le &\ \ell^{-n_1 - n_2} 
\bigl(\chi_{\bold0, \ell}(\bw)+ \chi_{\bold0, \ell}(\bz)\bigr) \chi_{\bold0, 1}(\bxi), \\
|\nabla_{\bxi}^l g(\bw, \bz, \bxi)| \le &\  \ell^{-1} |\bw-\bz|  
\bigl(\chi_{\bold0, \ell}(\bw)+ \chi_{\bold0, \ell}(\bz)\bigr) \chi_{\bold0, 1}(\bxi).
\end{align*}
Therefore
\begin{equation*}
P_{n, n+1}(\bw, \bz, \bxi; g)
\le
C\ell^{-1}\frac{\bigl(\chi_{\bold0, \ell}(\bw)
+ \chi_{\bold0, \ell}(\bz)\bigr) \chi_{\bold0, 1}(\bxi)}{1+|\bw-\bz|^n}.
\end{equation*}
Arguing as in the first part of the proof we arrive at the estimate
\begin{equation*}
\1 P_{n, n+1}(g)  \1_{1, q} \le C\ell^{\frac{d}{q}-1},\ C = C(\ell_0), 
\end{equation*}
which implies \eqref{l-r:eq} by virtue of Theorem \ref{full:thm}. 
As we have assumed that $|\det\BS| 
\ge ~1/4$, 
the constant 
in \eqref{l-r:eq} does not depend on $s, t$. 

If $|s-t|< 1/4$, then we choose a number $u\in [0, 1]$ such that $|s-u|\ge 1/4$,\ 
$|t-u|\ge 1/4$, apply the estimate obtained in the first part of the proof 
to $\op_{\a, s}(a) - \op_{\a, u}(a)$ and 
$\op_{\a, t}(a) - \op_{\a, u}(a)$,
and use the triangle inequality \eqref{triangle:eq}. 
\end{proof}

\begin{thm}\label{separate:thm}
Let $q\in (0, 1]$, $\a\ell\rho\ge \ell_0$ and $R\ge \ell$. 
Let $h_1, h_2\in\plainL\infty(\R^d)$ 
be two functions such that the distance between their supports 
is at least $R$. 
Let $a\in \BS^{(n, m)}, m\ge n,$ be
a symbol satisfying the condition \eqref{a_support:eq}. 
Then  for any $t\in [0, 1]$ we have
\begin{equation*}
\|  h_1 \op_{\a, t}(a) h_2\|_{\GS_q} 
\le C_{q, m} \| h_1\|_{\plainL\infty} 
\|h_2\|_{\plainL\infty}(\a R \rho)^{\frac{d}{q} - m} \SN^{(n, m)}(a; \ell, \rho),
\end{equation*} 
with a constant $C_{q, m}$ independent of $t$. 
\end{thm}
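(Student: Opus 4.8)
\textbf{Proof proposal for Theorem \ref{separate:thm}.}

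The plan is to reduce the statement to Theorem \ref{symbol:thm}(2) by the same scaling-and-normalization strategy used in the proof of Theorem \ref{general_trace:thm}. First I would use the unitary equivalence \eqref{unitary:eq} together with the scaling invariance \eqref{scale:eq} of the norms $\SN$, choosing $\ell_1 = (\a\rho)^{-1}$ and $\rho_1 = \rho$, to reduce to the case $\a = 1$, $\rho = 1$, and an arbitrary $\ell \ge \ell_0$. Under this rescaling the weights $h_1, h_2$ become $U_\ell h_j U_\ell^{-1}$ (i.e. $h_j(\ell\,\cdot\,)$), so the separation distance between their supports becomes $R/\ell \ge 1$, and the exponent $(\a R\rho)^{d/q-m}$ in the claimed bound becomes $(R/\ell)^{d/q-m}$ up to the factor $\ell^{d/q-m}$ that will be produced by the support of the rescaled symbol. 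I would also normalize $\SN^{(n,m)}(a;\ell,1) = 1$ and translate so that $\bu = \bmu = \bzero$, as in Theorem \ref{general_trace:thm}.

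Next I would replace the $\plainL\infty$ weights by $\plainl\infty(\plainL2)$ weights: since the support of $a(\bw,\bxi)$ in $\bw$ lies in $B(\bzero,\ell)$, I can insert cut-offs $\chi_{\bzero, c\ell}$ (for a suitable $c = c(\ell_0)$, using $\ell \ge \ell_0$) on either side of the operator without changing it, absorbing the $\bxi$-integration as well, so that effectively $h_j$ is supported in a ball of radius $O(\ell)$. Then $\1 h_j \chi_{\bzero, c\ell}\1_{2,\infty} \le C\|h_j\|_{\plainL\infty}$, and the truncated weights still have supports separated by at least $R$. Now Theorem \ref{symbol:thm}(2) applies with $r = R$, $h = [q^{-1}]+1$, and the given $n$; it gives
\begin{equation*}
\|h_1 \op_{1,t}(a) h_2\|_{\GS_q}
\le C_{q,m}\, R^{\frac{d}{q}-m}\, \1 h_1 \chi_{\bzero, c\ell}\1_{2,\infty}
\1 h_2 \chi_{\bzero, c\ell}\1_{2,\infty}\,\1 F^{\circ}_{n, m}(a)\1_{1, q}.
\end{equation*}
It remains to estimate $\1 F^{\circ}_{n,m}(a)\1_{1,q}$ under the normalization $\SN^{(n,m)}(a;\ell,1) = 1$. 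By definition \eqref{F:eq}, $F^{\circ}_{n,m}(\bw,\bxi;a) \le C\sum_{k\le n}\ell^{-k}\chi_{\bzero,\ell}(\bw)\chi_{\bzero,1}(\bxi) \le C\chi_{\bzero,\ell}(\bw)\chi_{\bzero,1}(\bxi)$ (using $\ell \ge \ell_0$), so its $\plainl q(\plainL1)(\R^{2d})$-quasi-norm is bounded by $C(\sum_{|\bj|\le C\ell}1)^{1/q} \le C\ell^{d/q}$. Combining, the right-hand side becomes $C_{q,m} R^{d/q - m}\|h_1\|_{\plainL\infty}\|h_2\|_{\plainL\infty}\,\ell^{d/q}$; undoing the scaling (where $R$ stands for $R/\ell$ and the normalization of $\SN$ is restored) yields exactly $(\a R\rho)^{d/q-m}\SN^{(n,m)}(a;\ell,\rho)$ times the $\plainL\infty$-norms, as claimed.

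The main obstacle I anticipate is bookkeeping the scaling factors so that the $\ell^{d/q}$ from the symbol's support combines with $R^{d/q-m}$ to produce precisely the advertised power $(\a R\rho)^{d/q-m}$ and the intrinsic norm $\SN^{(n,m)}(a;\ell,\rho)$ rather than a scale-dependent one; one must check that the replacement of $\plainL\infty$ weights by compactly-supported-truncated ones does not cost a power of $\ell$ (it does not, because the truncation radius is $O(\ell)$ and the $\plainl\infty(\plainL2)$-quasi-norm over unit cubes is supremum- not sum-based) and that the separation hypothesis $R \ge \ell$ is exactly what is needed for $R^{d/q-m}\ell^{d/q}$ to be dominated by the target after rescaling — indeed after rescaling $R \mapsto R/\ell \ge 1$, so Theorem \ref{symbol:thm}(2) is applicable with $r = R/\ell \ge 1$, and the factor $\ell^{d/q-m}$ recombines with $(R/\ell)^{d/q-m}$ to give $R^{d/q-m}$, which after the final unscaling is $(\a R\rho)^{d/q-m}$. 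The rest is routine.
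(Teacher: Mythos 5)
Your plan --- reduce to $\a = 1$ by scaling and then invoke Theorem~\ref{symbol:thm}(2) --- is exactly the paper's plan, but you chose the wrong rescaling, and that choice is not cosmetic: with it the argument produces the wrong power of $\a\ell\rho$. You took $\ell_1 = (\a\rho)^{-1}$, $\rho_1 = \rho$ (the choice used in Theorem~\ref{general_trace:thm}), which normalizes $\rho' = 1$ and makes the new support radius $\ell' = \a\ell\rho$ arbitrary. Under $U_{\ell_1}$ the supports of $h_1,h_2$ dilate by $1/\ell_1 = \a\rho$, so the new separation is $R' = R\a\rho$, not $R/\ell$ as you state; moreover this is only $\ge\ell_0$, not necessarily $\ge 1$, so Theorem~\ref{symbol:thm}(2) is not directly applicable if $\ell_0 < 1$. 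More seriously, since $\rho' = 1$, the bound $|\nabla_\bw^k\nabla_\bxi^m a'|\le(\ell')^{-k}(\rho')^{-m}$ carries no decay from the $m$-th $\bxi$-derivative, so your $F^\circ$ estimate is (as you correctly compute in the body) $\1 F^{\circ}_{n,m}(a')\1_{1,q}\le C(\ell')^{d/q}$, and the resulting bound is $C(R')^{d/q-m}(\ell')^{d/q} = C(\a R\rho)^{d/q-m}(\a\ell\rho)^{d/q}$, which exceeds the target by the non-constant factor $(\a\ell\rho)^{d/q}$. Your closing paragraph silently switches to the other bookkeeping (claiming the factor is $\ell^{d/q-m}$ and the separation $R/\ell$), which contradicts the body and implicitly assumes the rescaling you did not perform.

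The paper rescales with $\ell_1 = \ell$, $\rho_1 = (\a\ell)^{-1}$, yielding $\a'=1$, $\ell'=1$, $\rho'=\a\ell\rho$, and separation $R/\ell\ge 1$ (this is where the hypothesis $R\ge\ell$ is used). With $\ell'=1$ the $\bw$-summation in $\1\cdot\1_{1,q}$ contributes only $O(1)$, while $F^{\circ}_{n,m}\le C(\rho')^{-m}\chi_{\bzero,1}(\bw)\chi_{\bzero,\rho'}(\bxi)$ picks up the crucial $(\rho')^{-m}$, giving $\1 F^{\circ}_{n,m}\1_{1,q}\le C(\rho')^{d/q-m}$; combined with $(R/\ell)^{d/q-m}$ from Theorem~\ref{symbol:thm}(2) this yields exactly $\bigl(R\rho'/\ell\bigr)^{d/q-m}=(\a R\rho)^{d/q-m}$. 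So the two available rescalings are not interchangeable here: you must normalize $\ell$, not $\rho$. Finally, your cut-off digression (inserting $\chi_{\bzero,c\ell}$) is unnecessary: $\plainL\infty(\R^d)\subset\plainl\infty(\plainL2)(\R^d)$ with $\1 h\1_{2,\infty}\le\|h\|_{\plainL\infty}$ directly, since the lattice quasi-norm is a supremum over unit cubes.
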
 

\begin{proof}
Using \eqref{scale:eq} and \eqref{unitary:eq} with 
$\ell_1 = \ell, \rho_1 = (\a\ell)^{-1}$, we see 
that it suffices to prove the sought inequality 
for $\a = 1$, $\ell = 1$, and arbitrary $\rho\ge \ell_0$  and $R\ge 1$. 
Again, without loss of generality assume that $\bu = \bmu = \bold0$, 
$\|h_1\|_{\plainL\infty} = \|h_2\|_{\plainL\infty}=1$,  
and $\SN^{(n, m)}(a; 1, \rho) = 1$.  
Use Theorem \ref{symbol:thm}(2) with $r = R$. 
It is straightforward to see that 
\begin{equation*}
F^{\circ}_{n, m}(\bw, \bxi; a)
\le C \chi_{\bold0, 1}(\bw)\chi_{\bold0, \rho}(\bxi)
\rho^{-m}, 
\end{equation*}
see \eqref{F:eq} for definition, so that 
\begin{equation*}
\1 F^{\circ}_{n, m}(a)\1_{1, q}^q\le C_q\rho^{d - mq}.
\end{equation*}
By \eqref{symbol0:eq},
\begin{equation*}
\| h_1 \op_{1, t} (a) h_2\|_{\GS_q}
\le 
C_q (R\rho)^{\frac{d}{q}-m},
\end{equation*}
which leads to the sought estimate. 
\end{proof}
 
 \subsection{Symbols with non-compact support} 
 Here we illustrate the use of the obtained estimates 
 and derive a semi-classical 
 bound for the $t$-pseudo-differential operators 
 whose symbols are not necessarily compactly supported. 
 Suppose that for some constant $A>0$, and some number $q\in (0, 1]$  
 the symbol $a$ satisfies the bound 
 \begin{equation}\label{non_compact:eq}
 \underset{\substack
{0\le k\le n\\
0\le l\le n}}
 \max 
 |\nabla_{\bw}^k\nabla_{\bxi}^l a(\bw, \bxi)|\le A
 (1+|\bw|)^{-\g_1} (1+|\bxi|)^{-\g_2}, \g_1, \g_2 > dq^{-1},
 \end{equation}
where $n$ is as in \eqref{n:eq}.

\begin{thm} 
Let the symbol $a$ satisfy \eqref{non_compact:eq}. Then 
\begin{equation*}
\| \op_{\a, t}(a)\|_{\GS_q}\le C_q A \a ^{\frac{d}{q}},
\end{equation*}
with a constant $C_q$ independent of $t\in [0, 1]$.
\end{thm}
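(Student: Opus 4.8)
The plan is to localize the symbol on a unit lattice and reduce everything to the compactly supported case already settled in Theorem \ref{general_trace:thm}; the decay in \eqref{non_compact:eq} will enter only through the convergence of a lattice series, never through a gain under differentiation. Fix the partition of unity $\{\psi_{\bn}\}_{\bn\in\Zd}$ on $\R^d$ subordinate to the cover $\{B(\bn,c_0)\}_{\bn\in\Zd}$, $c_0=2\sqrt d$, with $|\nabla^k\psi_{\bn}|\le C_k$ uniformly in $\bn$ (as in the proof of Theorem \ref{full:thm}), use the same family in the $\bxi$-variable, and decompose
\begin{equation*}
a=\sum_{\bn,\bj\in\Zd}a_{\bn\bj},\qquad a_{\bn\bj}(\bw,\bxi)=\psi_{\bn}(\bw)\,\psi_{\bj}(\bxi)\,a(\bw,\bxi),
\end{equation*}
so that $\op_{\a,t}(a)=\sum_{\bn,\bj}\op_{\a,t}(a_{\bn\bj})$ and each $a_{\bn\bj}$ is supported in $B(\bn,c_0)\times B(\bj,c_0)$, hence satisfies \eqref{a_support:eq} with $\ell=\rho=c_0$.

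Next I would estimate each piece separately. By the Leibniz rule, the uniform bounds on the $\psi$'s and \eqref{non_compact:eq}, for all $n',m'\le n$ and all $(\bw,\bxi)$,
\begin{equation*}
|\nabla_{\bw}^{n'}\nabla_{\bxi}^{m'}a_{\bn\bj}(\bw,\bxi)|\le C\,A\,(1+|\bn|)^{-\g_1}(1+|\bj|)^{-\g_2},
\end{equation*}
since $1+|\bw|\asymp 1+|\bn|$ on $\supp\psi_{\bn}$ and $1+|\bxi|\asymp 1+|\bj|$ on $\supp\psi_{\bj}$. Therefore $a_{\bn\bj}\in\BS^{(n,n)}$ with $\SN^{(n,n)}(a_{\bn\bj};c_0,c_0)\le CA(1+|\bn|)^{-\g_1}(1+|\bj|)^{-\g_2}$, and applying \eqref{trace_symbol:eq} with $\ell=\rho=c_0$ (legitimate once $\a c_0^2\ge\ell_0$, i.e. for $\a$ bounded below as throughout this section) gives
\begin{equation*}
\|\op_{\a,t}(a_{\bn\bj})\|_{\GS_q}\le C_q\,\a^{\frac dq}\,A\,(1+|\bn|)^{-\g_1}(1+|\bj|)^{-\g_2},
\end{equation*}
with $C_q$ independent of $t$.

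Finally I would sum by means of the quasi-triangle inequality \eqref{triangle:eq}:
\begin{equation*}
\|\op_{\a,t}(a)\|_{\GS_q}^q\le\sum_{\bn,\bj\in\Zd}\|\op_{\a,t}(a_{\bn\bj})\|_{\GS_q}^q\le C_q^q\,\a^{d}\,A^q\Bigl(\sum_{\bn\in\Zd}(1+|\bn|)^{-q\g_1}\Bigr)\Bigl(\sum_{\bj\in\Zd}(1+|\bj|)^{-q\g_2}\Bigr).
\end{equation*}
Both sums converge exactly because $\g_1,\g_2>dq^{-1}$ is the same as $q\g_1,q\g_2>d$; taking $q$-th roots gives $\|\op_{\a,t}(a)\|_{\GS_q}\le C_q A\a^{d/q}$. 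The finiteness of $\sum_{\bn,\bj}\|\op_{\a,t}(a_{\bn\bj})\|_{\GS_q}^q$ also shows that the series $\sum_{\bn,\bj}\op_{\a,t}(a_{\bn\bj})$ is Cauchy in the complete quasi-metric space $\GS_q$ and converges there to $\op_{\a,t}(a)$, so in particular $\op_{\a,t}(a)\in\GS_q$.

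The computations above are routine; the one point that needs care is the choice of scale in the decomposition. A dyadic splitting in $|\bw|$ and $|\bxi|$ is tempting but does not help here: \eqref{non_compact:eq} provides no decay of the bound under differentiation, so the localized pieces genuinely live on the unit scale, which forces the unit-lattice partition above and makes the whole of the decay come out of the convergent series $\sum_{\bn}(1+|\bn|)^{-q\g}$.
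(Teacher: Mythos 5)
Your proof is correct and follows essentially the same route as the paper's: a unit-lattice partition of unity in both $\bw$ and $\bxi$, localization to compactly supported pieces $\psi_{\bn}\psi_{\bj}a$, an application of \eqref{trace_symbol:eq} to each piece, and summation by the $q$-triangle inequality using $\g_1 q, \g_2 q > d$. The only cosmetic difference is that you carry $\ell=\rho=2\sqrt d$ explicitly into \eqref{a_support:eq} rather than renormalizing to $\SN^{(n,n)}(\cdot;1,1)$ as the paper does, and you also flag the implicit lower bound on $\a$ inherited from Theorem \ref{general_trace:thm}; both are harmless and the latter is a reasonable clarification.
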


\begin{proof} 
As in the proof of Theorem \ref{full:thm} cover $\R^d$ with 
open balls $B(\bj, 2\sqrt d)$, $\bj\in\Z^d$. Let $\psi_\bj\in \plainC\infty_0(\R^d)$, 
$\bj\in\Z^d,$ 
be an associated partition of unity satisfying \eqref{uniform:eq}. Consider the symbols 
\begin{equation*}
a^{(\bj, \bs)}(\bw, \bxi) = \psi_\bj(\bw) \psi_\bs(\bxi) a(\bw, \bxi).
\end{equation*}
These symbols are compactly supported and 
\begin{equation*}
\SN^{(n, n)}(a^{(\bj, \bs)}; 1, 1)\le C A (1+|\bj|)^{-\g_1}(1+|\bs|)^{-\g_2}.
\end{equation*}
By \eqref{trace_symbol:eq},
\begin{equation*}
\| \op_{\a, t}(a^{(\bj, \bs)})\|_{\GS_q}^q\le C A^q\a^d 
(1+|\bj|)^{-\g_1 q}(1+|\bs|)^{-\g_2 q}.
\end{equation*} 
By the triangle inequality \eqref{triangle:eq} we have
\begin{equation*}
\| \op_{\a, t}(a)\|_{\GS_q}^q
\le C A^q  \a^d \sum_{\bj, \bs\in\Z^d} (1+|\bj|)^{-\g_1 q}(1+|\bs|)^{-\g_2 q}
\le C' A^q \a^d,
\end{equation*}
as claimed. 
\end{proof}


\section{Estimates for operators with non-smooth symbols}
\label{nonsmooth:sect}

\subsection{Lipschitz domains} 
\label{domains:subsect}
Here we obtain $\GS_q$-estimates for operators 
with symbols having jump discontinuities. The discontinuities are 
introduced via the projections  
$\chi_\L$ and/or $P_{\Om, \a} = \op_\a(\chi_\Om)$
where $\L$ and $\Om$ are some suitable domains whoce properties are specified 
in the next definition.

\begin{defn}\label{domain:defn}
Let $d\ge 2$. We say that $\L\subset\R^d$ is a  
\textit{basic domain} (or \textit{basic Lipschitz domain}) if  
there exists a Lipschitz function 
$\Phi = \Phi(\hat\bx),\ \hat\bx\in\R^{d-1},$ such that 
with a suitable choice of the Cartesian coordinates $\bx = (\hat\bx, x_d)$, 
$\hat\bx = (x_1, x_2, \dots, x_{d-1})$ the domain $\L$ 
is represented as
\begin{equation}\label{Lambda:eq}
 \L = \{\bx\in\R^d: x_d > \Phi(\hat\bx)\}.  
 \end{equation}
 It is assumed that the function $\Phi$ is \textsl{uniformly} Lipschitz, 
 i.e. the 
 constant 
  \begin{equation}\label{Mphi:eq}
 M = M_{\Phi} = \sup_{\substack{\hat\bx, \hat\by,\\
 \hat\bx\not = \hat\by 
} } 
 \frac{|\Phi(\hat\bx) - \Phi(\hat\by)|}
 {|\hat\bx - \hat\by|}
 \end{equation}
 is finite. 
In this case we use the notation $\L = \G(\Phi)$. 

A domain $\L$ is said to be \textit{Lipschitz} if locally it can be represented by 
basic domains, i.e.  
for any $\bz\in\R^d$ there is a radius $r >0$ such that 
$B(\bz, r)\cap \L = B(\bz, r)\cap \L_0$ with some basic domain $\L_0 = \L_0(\bz)$. 
%
\end{defn}

Our results are also applicable in the case $d = 1$. 
To state them simultaneously for all dimensions, 
in the case $d=1$ we use the term \textit{basic domain} for the domain $\L$ which is 
either $(-\infty, 0)$ or $(0, \infty)$. 
The role of  Lipschits domains will be played by intervals of the 
form $(0, L)$, $L >0$.

Our objective is to obtain semi-classical 
$\GS_q$-estimates for the Hankel-type operators 
$\chi_\L\op_{\a, t}(a)(I-\chi_\L)$, 
$P_{\Om, \a}\op_{\a, t}(a) (I-P_{\a, \Om})$ 
and $\chi_\L P_{\a, \Om}(I-\chi_\L)$, 
with suitable domains $\L, \Om$ and suitable symbols $a$. 
We work either with $t = 0$ or 
$t=1$. First we establish the sought estimates for basic  
domains 
$\L$ and $\Om$, and then extend 
the result to the general bounded Lipschitz 
ones using appropriate partitions of unity.  

For $d\ge 2$ all the $\GS_q$-estimates obtained for the basic  
domains are uniform in the Lipschitz 
constants $M_\Phi$ and $M_\Psi$ 
satisfying the condition 
\begin{equation}\label{gradient:eq}
\max(M_{\Phi}, M_{\Psi}) \le M,
\end{equation}
with some constant $M$.  
Needless to say, the choice of the coordinates for which 
$\L$ or $\Om$ have the form 
\eqref{Lambda:eq} does not have to be the same for the domains $\L$ and $\Om$.

As in the previous section we assume as a rule that the symbols 
are compactly supported and satisfy the condition \eqref{a_support:eq}. 
The constants in the obtained estimates will be independent of
the symbols, and of $\bu$, $\bmu$ and $\ell, \rho$ but may depend 
on the constant $\ell_0$ in the bound $\a\ell\rho\ge \ell_0$, and, for $d\ge 2$, 
on $M$.  
As mentioned in the Introduction some estimates 
were obtained in \cite{Sob} 
for the class $\GS_1$. Note also that for 
$d\ge 2$ the results of \cite{Sob} require 
$\plainC1$-smoothness of the domains $\L$, $\Om$ whereas in the current paper 
the Lipschitz property suffices. 

We obtain consecutively estimates of two types. 
First we study the operators  
$$
\chi_\L \op_{\a, t}(a)(I-\chi_\L)\ \  \textup{and}\ \  
P_{\Om, \a} \op_{\a, t}(a)(I- \chi_\L).
$$ 
Since 
these operators contain only one characteristic function we refer to this case 
as the case of discontinuity in one variable. 
Next we look at the operators of the form 
$\chi_\L \op_{\a, t}(a) P_{\Om, \a}(I-\chi_\L)$ 
which is naturally referred to as the 
case of  discontinuity in two variables.  

It is useful to remark on the scaling properties of basic   
domains in $d\ge 2$. 
Applying \eqref{unitary:eq} to the characteristic function 
$\chi_\L$, $\L = \G(\Phi)$, we observe that under scaling $U_\ell$ 
the domain $\L$ transforms into $\G(\tilde\Phi)$, where 
$\tilde\Phi(\hat\bx) = \ell\Phi(\ell^{-1}\hat\bx)$. It is obvious that 
$M_{\tilde\Phi} = M_{\Phi}$. 

Let $\L = \G(\Phi)\subset\R^d, d\ge 2,$ be a basic domain. 
By definition \eqref{Mphi:eq}, 
\begin{equation*}
|x_d - \Phi(\hat\bx) - \bigl(y_d - \Phi(\hat\by)\bigr)|
\le \lu M\ru\ |\bx - \by|, \ \lu M\ru := \sqrt {1+M^2}
\end{equation*}
for all $\bx, \by\in\R^d$, so that 
\begin{equation}\label{away:eq}
|\bx-\by|\ge \frac{1}{\lu M\ru} \bigl(x_d - \Phi(\hat\bx)\bigr),\ 
\ \ \textup{for all}\ \ \bx\in\L, \by\notin\L.
\end{equation}
In the case $d = 1$, for a basic domain $\L$ 
the same type of bound is obvious:
\begin{equation}\label{away1:eq}
|x-y|\ge |x|, x\in \L, y \notin\L.
\end{equation}

\subsection{ Discontinuity in one variable} 
Here we study the combinations involving an operator  with a smooth symbol and one 
of the operators $\chi_\L$ or $P_{\Om, \a}$.

\begin{thm}\label{sandwich:thm} 
Let $\L$ and $\Om$ be basic domains. 
Let $q\in (0, 1]$, $\a\ell\rho\ge \ell_0$, $n$ be as in \eqref{n:eq}, and let
\begin{equation}\label{m:eq}
m = [(d+1)q^{-1}]+1.
\end{equation}
Suppose that the symbol $a\in\BS^{(n, m)}$ 
satisfies \eqref{a_support:eq}. 
Then  for $t = 0$ or $1$ we have
\begin{equation}\label{sandwich1:eq}
\|\chi_{\L} \op_{\a, t}(a) (1-\chi_\L)\|_{\GS_q}\le C_q 
(\a\ell\rho)^{\frac{d-1}{q}} \SN^{(n, m)}(a; \ell, \rho),
\end{equation}
\begin{equation}\label{sandwich_om:eq}
\|P_{\Om, \a} \op_{\a, t}(a) (1- P_{\Om,\a})\|_{\GS_q}\le C_q 
(\a\ell\rho)^{\frac{d-1}{q}} \SN^{(m, n)}(a; \ell, \rho).
\end{equation}
\end{thm}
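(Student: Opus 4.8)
The strategy is to reduce the Hankel-type operator $\chi_\L \op_{\a,t}(a)(1-\chi_\L)$ to a sum of operators of the form $h_1 \op_{\a,t}(a^{(j)}) h_2$ with disjoint supports, to which Theorem~\ref{separate:thm} applies, and then to sum the resulting $\GS_q$-quasi-norms using the triangle inequality \eqref{triangle:eq}. First I would use the scaling relations \eqref{scale:eq}, \eqref{unitary:eq} (together with the scaling behaviour of $\G(\Phi)$ noted before \eqref{away:eq}) and a Euclidean isometry to reduce to the normalized case $\a = 1$, $\ell = 1$, $\rho \ge \ell_0$, $\bu = \bmu = \bzero$, $\SN^{(n,m)}(a;1,\rho)=1$, with $\L = \G(\Phi)$ for some uniformly Lipschitz $\Phi$ with $M_\Phi \le M$. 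Since $\supp a(\cdot,\bxi) \subset B(\bzero,1)$, the left and right cutoffs are effectively localized: the kernel of $\chi_\L\op_{1,t}(a)(1-\chi_\L)$ is supported in a neighbourhood of the hypersurface $\{x_d = \Phi(\hat\bx)\}$ intersected with $B(\bzero, C)$.

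The key geometric step is a dyadic decomposition away from the boundary. Using \eqref{away:eq}, for $\bx\in\L$ and $\by\notin\L$ one has $|\bx-\by|\ge \lu M\ru^{-1}(x_d-\Phi(\hat\bx))$, i.e.\ the distance between the supports of the two cutoffs grows with the distance of the left variable from $\p\L$. So I would introduce a partition of unity $\{\phi_\nu\}$, $\nu\ge 0$, in the variable $x_d - \Phi(\hat\bx)$: $\phi_0$ supported where this quantity is $\lesssim 1$, and $\phi_\nu$ supported where it is $\sim 2^\nu$ for $\nu\ge 1$. Writing $\chi_\L = \sum_\nu \chi_\L\phi_\nu$, the operator $(\chi_\L\phi_\nu)\op_{1,t}(a)(1-\chi_\L)$ has cutoffs whose supports are separated by $R_\nu \ge c\, 2^\nu$ when $\nu\ge1$ (by \eqref{away:eq}), while the $\nu=0$ term is handled by Theorem~\ref{separate:thm} with $R\sim 1$ after a further partition of the base $\hat\bx$-variable into unit cubes (the number of such cubes meeting $B(\bzero,C)$ is $O(1)$, so this contributes only a constant). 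For $\nu\ge 1$, Theorem~\ref{separate:thm} (with the same $a$, and with $R = R_\nu$, $\ell=1$, $\rho$ as is) gives, for each fixed base-cube, a bound $C_{q,m}(R_\nu\rho)^{d/q - m}$; the number of base-cubes at dyadic height $2^\nu$ which meet $\supp a$ is $O(1)$ because of the support restriction $B(\bzero,1)$ in $\bw$ (this is the point where compact support in $\bw$ is used). The remaining step is to deal with the fact that $\chi_\L\phi_\nu$ is not smooth: one absorbs $\chi_\L$ into $h_1$ as an $\plainL\infty$-factor — Theorem~\ref{separate:thm} only requires $h_1,h_2\in\plainL\infty$ — so no smoothness of $\L$ beyond the Lipschitz estimate \eqref{away:eq} is needed; this is precisely why the argument works for Lipschitz and not merely $\plainC1$ domains.

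Finally I would assemble the pieces. By the quasi-norm triangle inequality \eqref{triangle:eq},
\[
\|\chi_\L\op_{1,t}(a)(1-\chi_\L)\|_{\GS_q}^q
\le \sum_{\nu\ge 0} \|(\chi_\L\phi_\nu)\op_{1,t}(a)(1-\chi_\L)\|_{\GS_q}^q
\le C_{q,m}^q \rho^{\,d - mq}\sum_{\nu\ge 0} 2^{\nu(d - mq)} ,
\]
and since $m = [(d+1)q^{-1}]+1 > (d+1)q^{-1} \ge d q^{-1}$ we have $d - mq < -1 < 0$, so the geometric sum converges. Unwinding the normalization (restoring $\a,\ell,\rho$), $\rho^{(d-mq)/q}$ combined with the extra factor from the $\ell=1$, $\a=1$ scaling and the $O(1)$ bound on base-cubes produces exactly $(\a\ell\rho)^{(d-1)/q}\SN^{(n,m)}(a;\ell,\rho)$: the loss of one power of $q^{-1}$ in the exponent (compared to Theorem~\ref{general_trace:thm}) comes from the single ``missing'' dimension transverse to $\p\L$, which the dyadic sum in $\nu$ effectively integrates out, at the cost of needing $m$ strictly larger than in \eqref{m:eq}'s predecessor $n$ so that the sum over $\nu$ converges in the $q$-th power. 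The bound \eqref{sandwich_om:eq} for $P_{\Om,\a}\op_{\a,t}(a)(1-P_{\Om,\a})$ follows by the same argument after conjugating by the Fourier transform, which interchanges the roles of $\bw$ and $\bxi$ (hence the swap $\SN^{(n,m)}\rightsquigarrow\SN^{(m,n)}$): $P_{\Om,\a}=\op_\a(\chi_\Om)$ becomes multiplication by $\chi_\Om$ in the new variables, and $\op_{\a,t}(a)$ becomes a $t'$-quantization of the symbol $a$ read in the transformed variables, whose support in $\bxi$ (formerly $B(\bmu,\rho)$) now plays the role previously played by the $\bw$-support.

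The main obstacle I anticipate is bookkeeping rather than conceptual: one must verify that after the dyadic localization in $x_d - \Phi(\hat\bx)$ and the unit-cube decomposition in $\hat\bx$, the functions $\chi_\L\phi_\nu$ and $1-\chi_\L$ genuinely have supports separated by a distance comparable to $2^\nu$ \emph{uniformly} in the Lipschitz constant (this is where the clean inequality \eqref{away:eq} is essential, and where one checks the implied constants depend only on $M$), and that the number of base-cubes at each dyadic level contributing a nonzero term is bounded independently of $\nu$ and of all parameters — which forces one to keep careful track of how $\supp a \subset B(\bzero,1)\times\R^d\times B(\bmu,\rho)$ constrains the relevant region of the boundary $\p\L$.
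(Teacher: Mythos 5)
Your high-level plan (use separation of supports, Theorem \ref{separate:thm}, then the $q$-triangle inequality) is the right one, and interchanging $\bx$ and $\bxi$ via the Fourier transform for \eqref{sandwich_om:eq} is exactly what the paper does. However, there is a genuine gap in the treatment of the boundary-adjacent piece, and it is not cosmetic: as written, your argument would prove a bound that is actually false.

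Concretely: you normalize to $\a=1$, $\ell=1$, $\rho\ge\ell_0$ (so the new $\rho$ is $\a\ell\rho$), dyadically decompose $\chi_\L$ in the transverse variable $x_d-\Phi(\hat\bx)$, and claim the $\nu=0$ piece is ``handled by Theorem \ref{separate:thm} with $R\sim 1$'' giving $(\rho)^{d/q-m}$, with the remaining $\nu$'s summing to the same order. But the supports of $\chi_\L\phi_0$ and $1-\chi_\L$ are \emph{not} separated — they meet along $\p\L$, so the distance is $0$, and the hypothesis $R\ge\ell$ of Theorem \ref{separate:thm} fails. The bound you claim for the $\nu=0$ term therefore does not follow. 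Moreover the resulting total $\sim(\a\ell\rho)^{(d-mq)/q}$ is \emph{smaller} than the target $(\a\ell\rho)^{(d-1)/q}$ by a large power (since $mq>d+1$, the exponent $d/q-m<-1/q$), and this cannot be correct: take $d=1$, $\L=(0,\infty)$, $a(\xi)=b(\xi/\rho)$ with $b$ a fixed bump; then $\chi_+\op_1(a)\chi_-$ is unitarily equivalent (by dilation) to $\chi_+\op_1(b)\chi_-$, a fixed nonzero operator, so its $\GS_q$-norm is a nonzero constant, not $o(1)$ as $\rho\to\infty$. The dominant contribution to $\|\chi_\L\op(a)(1-\chi_\L)\|_{\GS_q}$ comes precisely from the region near $\p\L$ that your argument tries to estimate away.

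What is missing is a \emph{trivial} $\GS_q$-bound at the correct scale near $\p\L$, not a separation bound. The paper normalizes the other way ($\rho=1$, $\ell\ge\ell_0$), covers $\overline\L$ with unit balls $B(\bj,2\sqrt d)$, $\bj\in\Z^d$, writes $\chi_\L=\sum_\bj\chi_{\L_\bj}$, and splits $\bj$ into an interior set $\Sigma_0$ (where $B(\bj,2\sqrt d)$ is distance $\gtrsim |j_d-\Phi(\hat\bj)-R|$ from $\complement\L$, handled by Theorem \ref{separate:thm}) and a boundary set $\Sigma_1$ with $\#\Sigma_1\lesssim\ell^{d-1}$. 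For $\bj\in\Sigma_1$ one uses $\|\chi_{\L_\bj}\op_{1,0}(\psi_\bj a)(I-\chi_\L)\|_{\GS_q}\le\|\op_{1,0}(\psi_\bj a)\|_{\GS_q}\le C$, which is just \eqref{trace_symbol:eq} since $\psi_\bj a$ has $\bw$-support of unit radius and $\bxi$-support of unit radius. Summing the $q$-th powers, the $\Sigma_1$ block contributes $\lesssim\ell^{d-1}$; the $\Sigma_0$ block contributes $\lesssim\sum_{|\hat\bj|\le C\ell}\sum_{j_d}(1+|j_d-\Phi(\hat\bj)|)^{d-mq}\lesssim\ell^{d-1}$, where convergence of the $j_d$-sum is exactly what requires $mq>d+1$ (this is the real role of \eqref{m:eq}, not merely convergence of a geometric series, which would only require $mq>d$). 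If you insist on the $\ell=1$, $\rho$-large normalization, the same idea works provided the lattice is taken at scale $\rho^{-1}$: then there are $\sim\rho^{d-1}$ boundary cubes, each giving an $O(1)$ $\GS_q$-bound because the effective per-cube semi-classical parameter $\a\ell'\rho'\sim 1$. Either way, the $(d-1)/q$ exponent comes from counting boundary cubes, not from a decaying geometric sum.
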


\begin{proof} 
The bound \eqref{sandwich_om:eq} follows from \eqref{sandwich1:eq} 
upon exchanging the roles of the variables $\bx$ and $\bxi$.  Thus it suffices to prove 
\eqref{sandwich1:eq}. 

Proof of \eqref{sandwich1:eq}. Assume without loss of generality that 
$\SN^{(n, m)}(a; \ell, \rho) = 1$. 
We prove \eqref{sandwich1:eq} for the operator $\op_{\a, 0}(a)$ 
only, the case $ t = 1$ is done in the same way. 
 
Let $d\ge 2$.  
We use the same scaling argument as in the proof of 
Theorem \ref{general_trace:thm}, and the fact that the Lipschitz constant 
of the domain $\L$ 
does not change under scaling, see the remark at the end 
of Subsection \ref{domains:subsect}. 
Thus it suffices to prove \eqref{sandwich1:eq} 
for $\a= \rho = 1$ and arbitrary $\ell\ge \ell_0$ with a $\ell_0>0$. 
Moreover without loss of generality assume that $\bu = \bmu = \bold0$. 

Choose the coordinates in such a way that $\L$ is represented as 
in \eqref{Lambda:eq}. 
Denote
\begin{equation*}
\L_s = \{\bx\in \R^d: x_d > \Phi(\hat\bx) + s\},\ s\in \R.
\end{equation*}
By virtue of \eqref{away:eq}, 
\begin{equation*}
|\bx-\by|\ge \frac{s + |x_d - \Phi(\hat\bx)-s|}{\lu M\ru},\ 
\forall \bx\in \L_s,\ \by\not\in \L,\ s >0.
\end{equation*}
Cover the closure $\overline{\L}$ with open balls of radius $2\sqrt d$ 
centred at the lattice points $\bj\in\Z^d$. 
Let $R = 4\lu M\ru \sqrt d$ and denote
\begin{equation*}
\Sigma = \{\bj\in\Z^d: B(\bj, 2\sqrt d)\cap \L\not=\varnothing\},\ 
\Sigma_0 = \{\bj\in\Z^d: \bj\in\L_{R}\},\ \Sigma_1 = \Sigma\setminus\Sigma_0.
\end{equation*} 
These definitions ensure that
\begin{equation*}
\dist \{B(\bj, 2\sqrt d), \complement\L\}
\ge 2\sqrt d + \frac{|j_d-\Phi(\hat\bj)-R|}{\lu M\ru}, \ \textup{for all}\ \ \bj\in\Sigma_0,
\end{equation*}
where $\complement\L = \R^d\setminus\L$.
Let $\psi_\bj, \bj\in\Sigma,$ be a smooth partition of unity subordinate to the 
introduced covering, such that 
\begin{equation*}
|\nabla_\bx^k\psi_{\bj}(\bx)|\le C_k, k = 0, 1, \dots,
\end{equation*}
uniformly in $\bj\in\Sigma$.  Denote $\L_\bj = \L\cap B(\bj, 2\sqrt d)$, and 
\begin{equation*}
T_{\bj} = \chi_{\L_\bj} \op_{1, 0}(\psi_\bj a) (I-\chi_\L).
\end{equation*}
Since $\SN^{(n, m)}(a; 1, 1)\le C\SN^{(n, m)}(a, \ell, 1)\le C$, 
by Theorem \ref{separate:thm} we obtain
\begin{equation*}
\| T_{\bj}\|_{\GS_q}^q \le C \biggl(2\sqrt d + \frac{|j_d-\Phi(\hat\bj)-R|}{\lu M\ru}\biggr)^{d-mq},\ \bj\in\Sigma_0.
\end{equation*}
By the triangle inequality 
\eqref{triangle:eq}, 
\begin{equation}\label{sobr:eq}
\left\|\sum_{\bj\in\Sigma_0}T_{\bj} \right\|_{\GS_q}^q
\le C\sum_{|\hat\bj|\le C\ell} \sum_{j_d\in\Z} \biggl(2\sqrt d + \frac{|j_d-\Phi(\hat\bj)-R|}{\lu M\ru}\biggr)^{d-mq}
\le C'\ell^{d-1},
\end{equation} 
 where we have used the fact that $qm>d+1$, see \eqref{m:eq}. 
For $\bj\in\Sigma_1$ we use the bound
\begin{equation*}
\|T_{\bj}\|_{\GS_q}\le \| \op_{1, 0}(\psi_{\bj} a)\|\le C,
\end{equation*} 
 which follows from \eqref{trace_symbol:eq}. As $\# \Sigma_1\le C\ell^{d-1}$, 
 $C = C(\ell_0)$, 
 with the help of the triangle inequality we obtain 
 \begin{equation*}
\left\|\sum_{\bj\in\Sigma_1}T_{\bj} \right\|_{\GS_q}^q
\le C\sum_{\bj\in \Sigma_1} 1 \le C'\ell^{d-1}.
 \end{equation*}
 Together with \eqref{sobr:eq} this leads to 
\begin{equation*}
\|\chi_\L \op_{1, 0}(a) (I-\chi_\L)\|_{\GS_q}^q\le C\ell^{d-1}.
\end{equation*} 
As explained earlier this bound implies \eqref{sandwich1:eq}. 

The proof in the case $d = 1$ is a simplified version of 
that for $d \ge 2$. 
In particular, instead of \eqref{away:eq} one uses \eqref{away1:eq}. 
We omit the details. 
\end{proof}

\begin{rem}\label{commutator:rem}
It is immediate to obtain from Theorem \ref{sandwich:thm} 
estimates of the form \eqref{sandwich1:eq} and \eqref{sandwich_om:eq} 
for the commutators $[\op_{\a, t}, \chi_\L]$ and $[\op_{\a, t}(a), P_{\Om, \a}]$. 
Indeed, recall 
that $[A, \Pi] = (I-\Pi)A\Pi - \Pi A(I-\Pi)$ for 
any bounded operator $A$ and any projection $\Pi$, and that 
$\bigl(\op_{\a, t}(a)\bigr)^* = \op_{\a, 1-t}(\overline a)$. Thus for $t=0$ or $1$ 
it follows from \eqref{sandwich1:eq} 
that 
\begin{equation*}
\|[\op_{\a, t}(a), \chi_\L]\|_{\GS_q}\le C_q 
(\a\ell\rho)^{\frac{d-1}{q}} \SN^{(n, m)}(a; \ell, \rho),
\end{equation*}
and the same estimate holds for the commutator with $P_{\Om, \a}$. 
\end{rem}

The corollary below extends Theorem \ref{sandwich:thm} 
to arbitrary bounded Lipschitz domains.  

\begin{cor}\label{sandwich_gen:cor} 
Let $\L$ and $\Om$ be bounded Lipschitz domains domains (for $d\ge 2$) 
or open bounded intervals (for $d=1$). 
Let $q\in (0, 1]$, $\a\ell\rho\ge \ell_0$, $n$, $m$  be as in \eqref{n:eq} and 
\eqref{m:eq} respectively. 
Suppose that the symbol $a\in\BS^{(n, m)}$ 
satisfies \eqref{a_support:eq}. 
Then  for $t = 0$ or $1$ we have
\begin{equation}\label{sandwich1_gen:eq}
\|\chi_{\L} \op_{\a, t}(a) (1-\chi_\L)\|_{\GS_q}\le C_q 
(\a\ell\rho)^{\frac{d-1}{q}} \SN^{(n, m)}(a; \ell, \rho),
\end{equation}
\begin{equation}\label{sandwich_om_gen:eq}
\|P_{\Om, \a} \op_{\a, t}(a) (1- P_{\Om,\a})\|_{\GS_q}\le C_q 
(\a\ell\rho)^{\frac{d-1}{q}} \SN^{(m, n)}(a; \ell, \rho).
\end{equation}
The constant $C_q$ in the above estimates may depend on the domains $\L, \Om$.
\end{cor}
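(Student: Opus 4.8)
The plan is to reduce the case of general bounded Lipschitz domains to the already-established case of basic domains (Theorem \ref{sandwich:thm}) by means of a finite partition of unity. First I would invoke the definition of a Lipschitz domain: for every point $\bz\in\overline\L$ there is a radius $r=r(\bz)>0$ and a basic domain $\L_0(\bz)$ such that $B(\bz,r)\cap\L = B(\bz,r)\cap\L_0(\bz)$. Since $\L$ is bounded, $\overline\L$ is compact, so finitely many such balls $B(\bz_i, r_i/2)$, $i=1,\dots,N$, cover $\overline\L$ (and hence cover $\supp\chi_\L$); fix a smooth partition of unity $\{\varphi_i\}_{i=0}^N$ subordinate to the open cover consisting of these balls together with the open set $\R^d\setminus\overline\L$, so that $\sum_i\varphi_i\equiv1$, $\supp\varphi_i\subset B(\bz_i,r_i)$ for $i\ge1$, and $\varphi_0$ is supported away from $\overline\L$. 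The same construction applies to $\Om$ in the $\xi$-variable.

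Next I would write $\chi_\L = \sum_{i=1}^N \varphi_i\chi_\L$ on the left and insert $1=\sum_j\varphi_j$ on the right (the $\varphi_0$-term being irrelevant since $\chi_\L\varphi_0=0$), expanding
\[
\chi_\L\op_{\a,t}(a)(1-\chi_\L) = \sum_{i=1}^N\sum_{j=0}^N \varphi_i\chi_\L\,\op_{\a,t}(a)\,\varphi_j(1-\chi_\L).
\]
For each pair $(i,j)$ with overlapping or adjacent supports, on $B(\bz_i,r_i)$ one has $\chi_\L = \chi_{\L_0(\bz_i)}$, so $\varphi_i\chi_\L = \varphi_i\chi_{\L_0(\bz_i)}$; the factor $\varphi_i$ can then be absorbed into a modified symbol (or kept as a bounded weight $h_1$), and one applies the basic-domain estimate \eqref{sandwich1:eq} with $\L$ replaced by the basic domain $\L_0(\bz_i)$. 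For pairs $(i,j)$ whose supports are separated by a fixed positive distance, the relevant operator is of the form $h_1\op_{\a,t}(a)h_2$ with $\dist(\supp h_1,\supp h_2)\ge c>0$, and Theorem \ref{separate:thm} gives a bound of order $(\a\ell\rho)^{(d-1)/q}$ or better (in fact with an extra decay in the separation). Using the quasi-triangle inequality \eqref{triangle:eq}, summing over the finitely many pairs $(i,j)$, and noting $N$ depends only on $\L$, yields
\[
\|\chi_\L\op_{\a,t}(a)(1-\chi_\L)\|_{\GS_q}^q \le \sum_{i,j}\|\varphi_i\chi_\L\op_{\a,t}(a)\varphi_j(1-\chi_\L)\|_{\GS_q}^q \le C_q(\a\ell\rho)^{\frac{d-1}{q}\,q}\,\SN^{(n,m)}(a;\ell,\rho)^q,
\]
which is \eqref{sandwich1_gen:eq}. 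The bound \eqref{sandwich_om_gen:eq} follows by the same argument applied in the $\bxi$-variable — writing $P_{\Om,\a}=\op_\a(\chi_\Om)$, localizing $\chi_\Om$ by a partition of unity in $\bxi$ — exactly as \eqref{sandwich_om:eq} was deduced from \eqref{sandwich1:eq} by exchanging the roles of $\bx$ and $\bxi$.

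The main technical obstacle is bookkeeping rather than a genuinely new idea: one must check that, after multiplying the symbol $a$ by a smooth cutoff $\varphi_i$ or $\psi_j$, the resulting symbol still lies in $\BS^{(n,m)}$ with $\SN^{(n,m)}$-norm controlled by a constant (depending only on the fixed cutoffs, hence on $\L$ and $\Om$) times $\SN^{(n,m)}(a;\ell,\rho)$ — this uses that the cutoffs and their derivatives are bounded and that the support condition \eqref{a_support:eq} is preserved. One must also verify that localization is compatible with the scaling relations \eqref{unitary:eq}–\eqref{scale:eq} and with the Lipschitz-constant uniformity, but since we only need a bound with a constant allowed to depend on the domains, no uniform control over the cover is required, so this step is routine. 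A minor point worth stating carefully is that in the separated-support pairs the cutoff $\varphi_j$ multiplies $(1-\chi_\L)$, which is merely bounded, so it enters as an $h_2\in\plainL\infty$-weight in Theorem \ref{separate:thm}, which is precisely the form that theorem allows.
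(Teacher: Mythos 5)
Your overall plan — reduce to basic domains via a finite cover and partition of unity, then use \eqref{sandwich1:eq} for the ``local'' pieces and Theorem~\ref{separate:thm} for the ``far'' pieces — is the right one, and in fact it matches the spirit of the paper's proof. But there is a genuine gap in the step where you handle the overlapping (non-separated) pairs $(i,j)$.

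You write that on $B(\bz_i,r_i)$ one has $\chi_\L=\chi_{\L_0(\bz_i)}$, so $\varphi_i\chi_\L=\varphi_i\chi_{\L_0(\bz_i)}$, ``and one applies the basic-domain estimate \eqref{sandwich1:eq} with $\L$ replaced by $\L_0(\bz_i)$.'' This does not follow: the estimate \eqref{sandwich1:eq} is for $\chi_{\L_0}\op_{\a,t}(a)(1-\chi_{\L_0})$ with the \emph{same} basic domain on both sides, whereas the operator in hand is $\varphi_i\chi_{\L_0(\bz_i)}\op_{\a,t}(a)\varphi_j(1-\chi_\L)$, and on $\supp\varphi_j$ the function $\chi_\L$ agrees with $\chi_{\L_0(\bz_j)}$, which generally differs from $\chi_{\L_0(\bz_i)}$. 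You never explain how to reconcile the two basic domains on the two sides. Pulling $\varphi_i,\varphi_j$ out as bounded factors does not help, because once $\varphi_j$ is removed there is no localization forcing $\chi_\L$ to equal any particular basic domain on the right. One can try to patch this by engineering the cover so that the union of any two overlapping cover balls sits inside a single ball on which a basic representation holds (using a Lebesgue number for a preliminary cover), but you would need to spell this out; as written, the reduction to a single basic domain is asserted rather than proved.

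The paper avoids this issue entirely by localizing only once: it multiplies the \emph{symbol} $a$ by $\phi_i(\bw)$, so that (for $t=0$, say) the kernel of $\op_{\a,0}(\phi_i a)$ is supported in $\bx\in\supp\phi_i$ and the left $\chi_\L$ can be replaced by $\chi_{\L_0(i)}$ exactly, without any loss. It does not insert a second partition on the right. Instead it introduces a smooth cutoff $h$ supported away from $B(\bz_i,2r)$ (with $\tilde h=1-h$ supported in $B(\bz_i,4r)$), splits $I-\chi_\L=h(I-\chi_\L)+\tilde h(I-\chi_\L)$, controls the $h$-part via Theorem~\ref{separate:thm} (since $\supp\phi_i$ and $\supp h$ are a positive distance apart), and uses $\tilde h(I-\chi_\L)=\tilde h(I-\chi_{\L_0(i)})$ for the near part — the crucial equality that your argument misses. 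Iterating this once more removes $\tilde h$, leaving $\chi_{\L_0(i)}\op_{\a,t}(\phi_i a)(I-\chi_{\L_0(i)})$, to which \eqref{sandwich1:eq} applies directly. Your proposal should be revised to incorporate an analogous near/far splitting on the right-hand side (or a Lebesgue-number refinement of the cover that justifies using a single basic domain for each overlapping pair).
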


\begin{proof} In the proof there is no difference between 
the cases $d = 1$ and $d\ge 2$. 
As in Theorem \ref{sandwich:thm} the bound \eqref{sandwich_om_gen:eq} 
follows from \eqref{sandwich1_gen:eq}. 
 Cover $\overline\L$ with 
finitely open balls $B(\bz_j, r)$, 
$j = 1, 2, \dots, J$ 
where $r$ is chosen in such a way that for each $j$ we have  
$B(\bz_j, 4r)\cap \L ~= ~B(\bz_j, 4r)\cap \L_0$ with some basic  
domain $\L_0 = \L_0(j)$.  
Let $\{\phi_j\}, j = 1, 2, \dots, J,$ 
be a finite partition of unity subordinate to the above covering. 
Due to the triangle inequality \eqref{triangle:eq} it suffices to obtain 
the bound \eqref{pred:eq} for the operators of the form 
\begin{equation*}
T_\a = \chi_{\L}\op_{\a, t}(b)(1-\chi_\L),
\end{equation*}
where $b(\bw, \bxi) = \phi(\bw) a(\bw, \bxi)$, and $\phi$ is an element 
of the partition above supported in the ball 
$B(\bz, r)$. Here we have omitted the index $j$ for brevity. 
  If $\L$  had 
been a basic domain then the required bound would have followed 
from \eqref{sandwich2:eq}. Let $\L_0$ be a basic domain 
such that 
\begin{equation}\label{local1:eq}
B(\bz, 4r)\cap \L = B(\bz, 4r)\cap \L_0.
\end{equation}
By construction,
\begin{equation*}
T_\a  = \chi_{\L_0}\op_{\a, t}(b)(I-\chi_\L). 
\end{equation*}
Now we need to show that the estimate \eqref{sandwich1_gen:eq} 
is preserved if 
one replaces $\L$ with $\L_0$ in the last bracket on the right-hand side. 
Let $\z\in\plainC\infty(\R^d)$ be as defined in \eqref{zeta:eq}, and let
$h(\bx) = \z\bigl((|\bx-\bz|(4r)^{-1})\bigr)$, $\tilde h = 1- h$. 
Observe that the distance between the supports of $\phi$ and $h$ is at least $r$. 
Thus by Theorem 
\ref{separate:thm} we have 
\begin{align*}
\|\chi_{\L_0} \op_{\a, t}(b)(I-\chi_\L)\|_{\GS_q}^q
\le &\ \|\op_{\a, t}(b) h\|_{\GS_q}^q
+\ \|\chi_{\L_0} \op_{\a, t}(b)\tilde h(I-\chi_\L)\|_{\GS_q}^q\\[0.2cm]
\le &\ C_m(\a r)^{d - mq} 
+ \|\chi_{\L_0} \op_{\a, t}(b)\tilde h(I-\chi_{\L_0})\|_{\GS_q}^q.
\end{align*}
Here we have used \eqref{local1:eq}. 
In a similar way we show that the last term on the right-hand side  
is bounded by 
\begin{equation*}
C_m(\a r)^{d - mq} 
+ \|\chi_{\L_0} \op_{\a, t}(b)(I-\chi_{\L_0})\|_{\GS_q}^q.
\end{equation*}
Since $\L_0$ is a basic domain we can use \eqref{sandwich1:eq} 
to obtain \eqref{sandwich1_gen:eq} for the symbol $b$. 
As explained earlier, this leads to \eqref{sandwich1_gen:eq} for 
the symbol $a$.  
\end{proof}

\subsection{Discontinuity in two variables} 
In this subsection we prove 
analogues of Theorem \ref{sandwich:thm} and Corollary 
\ref{sandwich_gen:cor} 
with the smooth symbol $a$ replaced   
by the symbol $a(\bx, \bxi) \chi_\Om(\bxi)$.  
Now we need a partition of unity of a special type which 
is described in 
\cite{H}, Ch.~1. 

\begin{prop}\label{hor:prop}
 Let $\tau = \tau(\bxi)>0$ be a Lipschitz function on $\R^d$ such that
\begin{equation}\label{slow:eq}
|\tau(\bxi) - \tau(\boldeta)|\le \vark |\bxi-\boldeta|,\
\end{equation}
for all $\bxi, \boldeta\in\R^d$ with some $\vark\in [0, 1)$. Then there exists
a set $\bxi_j\in\R^d$, $j\in\mathbb N$ such that the balls
$B(\bxi_j, \tau(\bxi_j))$ form a covering of $\R^d$ with the finite intersection property,
i.e. each ball intersects no more than $N = N(\vark)<\infty$ other balls.
Furthermore, there exist non-negative functions $\psi_j\in\plainC\infty_0(\R^d)$,
$j\in\mathbb N$, supported in $B(\bxi_j, \tau(\bxi_j))$ such that
\begin{equation*}
\sum_j \psi_j(\bxi) = 1,
\end{equation*}
and
\begin{equation*}
|\nabla^m\psi_j(\bxi)|\le C_m \tau(\bxi)^{-m},
\end{equation*}
for all $m$ uniformly in $j$.
\end{prop}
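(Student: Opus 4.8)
The plan is to construct the covering and the partition of unity by a standard Whitney-type / Besicovitch-type argument adapted to the slowly varying metric defined by $\tau$. The key point is that the condition \eqref{slow:eq} with $\vark<1$ makes $\tau$ ``slowly varying'' in the sense that on the ball $B(\bxi_0,\tau(\bxi_0))$ the value of $\tau$ is comparable to $\tau(\bxi_0)$: indeed for $\bxi\in B(\bxi_0,\tau(\bxi_0))$ we have $|\tau(\bxi)-\tau(\bxi_0)|\le\vark\,\tau(\bxi_0)$, so $(1-\vark)\tau(\bxi_0)\le\tau(\bxi)\le(1+\vark)\tau(\bxi_0)$. This two-sided comparison is what drives every subsequent estimate, and I would record it first.

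\textbf{Construction of the points and covering.} First I would pick a maximal subset $\{\bxi_j\}_{j\in\mathbb N}$ of $\R^d$ with the separation property that the balls $B(\bxi_j,\tfrac12\tau(\bxi_j))$ are pairwise disjoint; such a maximal set is automatically countable and, by maximality, the dilated balls $B(\bxi_j,\tau(\bxi_j))$ cover $\R^d$ (any point $\bxi$ not covered could be added to the set, contradicting maximality, after using the comparison estimate to see that $B(\bxi,\tfrac12\tau(\bxi))$ would be disjoint from all the chosen half-balls). The finite intersection property is then a volume-counting argument: if $B(\bxi_j,\tau(\bxi_j))\cap B(\bxi_k,\tau(\bxi_k))\ne\varnothing$ then by the comparison estimate $\tau(\bxi_k)$ is comparable to $\tau(\bxi_j)$ with constants depending only on $\vark$, hence $\bxi_k$ lies in a fixed dilate $B(\bxi_j,c\tau(\bxi_j))$ and the disjoint half-balls $B(\bxi_k,\tfrac12\tau(\bxi_k))$ all contain a ball of radius $\gtrsim\tau(\bxi_j)$; packing these into $B(\bxi_j,c\tau(\bxi_j))$ bounds their number by a constant $N=N(\vark)$.

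\textbf{Construction of the partition of unity.} Fix $\varphi\in\plainC\infty_0(\R^d)$ with $\varphi=1$ on $B(\bzero,\tfrac12)$, $\supp\varphi\subset B(\bzero,1)$, $0\le\varphi\le1$, and set $\tilde\psi_j(\bxi)=\varphi\bigl((\bxi-\bxi_j)/\tau(\bxi_j)\bigr)$, which is supported in $B(\bxi_j,\tau(\bxi_j))$ and satisfies $|\nabla^m\tilde\psi_j(\bxi)|\le C_m\tau(\bxi_j)^{-m}\le C_m'\tau(\bxi)^{-m}$ on its support, using the comparison estimate. Because the half-balls $B(\bxi_j,\tfrac12\tau(\bxi_j))$ cover $\R^d$ and $\tilde\psi_j=1$ on them, the sum $S(\bxi)=\sum_j\tilde\psi_j(\bxi)$ satisfies $1\le S(\bxi)\le N$, and by the finite intersection property the sum is locally finite with only $\le N$ nonzero terms near any point, so $S$ is smooth with $|\nabla^m S(\bxi)|\le C_m\tau(\bxi)^{-m}$. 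Setting $\psi_j=\tilde\psi_j/S$ then gives the required partition of unity, and the derivative bounds $|\nabla^m\psi_j(\bxi)|\le C_m\tau(\bxi)^{-m}$ follow from the quotient rule together with the lower bound $S\ge1$ and the already-established bounds on $\nabla^k\tilde\psi_j$ and $\nabla^k S$.

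\textbf{Main obstacle.} The genuinely delicate point is making the volume-counting for the finite intersection property quantitative and uniform in $j$ while tracking that every implicit constant depends only on $\vark$ (and $d$) and not on $\tau$ itself; once the comparison estimate $(1-\vark)\tau(\bxi_0)\le\tau(\bxi)\le(1+\vark)\tau(\bxi_0)$ on each ball is in hand, everything else is bookkeeping, but one must be careful that the maximality argument producing the covering really uses the two-sided comparison (so that a would-be new point has a half-ball disjoint from the existing ones) rather than circularly assuming the covering. Since this construction is classical (it is precisely the one in \cite{H}, Ch.~1), I would present the argument concisely and refer to that source for the routine verifications.
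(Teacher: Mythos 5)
The paper does not prove this proposition: it is stated as a known construction and attributed to \cite{H}, Ch.~1, so there is no internal argument to compare against. Your blind proof follows the standard Whitney/Besicovitch scheme that Hörmander's construction is based on, and its overall structure (slow-variation estimate, maximal separated set, volume counting, normalization by the sum) is sound. However, one numerical detail breaks the argument as written: the separation constant $\tfrac12$ is too large for general $\vark\in(0,1)$.

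Concretely, if the half-balls $B(\bxi_j,\tfrac12\tau(\bxi_j))$ are maximal pairwise disjoint, then for a point $\bxi$ not in the set, maximality gives some $j$ with $|\bxi-\bxi_j|<\tfrac12\tau(\bxi)+\tfrac12\tau(\bxi_j)$, and combining this with $\tau(\bxi)\le\tau(\bxi_j)+\vark|\bxi-\bxi_j|$ only yields $|\bxi-\bxi_j|<\tau(\bxi_j)/(1-\vark/2)$, which is strictly larger than $\tau(\bxi_j)$ whenever $\vark>0$. So the balls $B(\bxi_j,\tau(\bxi_j))$ are not shown to cover, and — more seriously — your proof that $S\ge1$ assumes the half-balls $B(\bxi_j,\tfrac12\tau(\bxi_j))$ cover (so that some $\tilde\psi_j\equiv1$ near every point), which is false. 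The fix is routine: take the maximal set with respect to disjointness of $B(\bxi_j,c\tau(\bxi_j))$ for a $c$ with $2c/(1-c\vark)=\theta<1$, e.g. $c=\tfrac{1}{2(2+\vark)}$ (or uniformly $c=\tfrac16$ since $\vark<1$), which gives $\theta=2/(4+\vark)<1$; then maximality shows that the balls $B(\bxi_j,\theta\tau(\bxi_j))$ already cover $\R^d$, so a fortiori the balls $B(\bxi_j,\tau(\bxi_j))$ do, and one sets $\tilde\psi_j$ to equal $1$ on $B(\bxi_j,\theta\tau(\bxi_j))$ and vanish outside $B(\bxi_j,\tau(\bxi_j))$. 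With that adjustment the lower bound $S\ge1$, the finite-intersection count, and the derivative estimates all go through exactly as you describe, with constants depending only on $\vark$ and $d$.
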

 
Assume that $\L, \Om\subset\R^d$ are basic domains. 
For $d\ge 2$ we choose the coordinates in such  way that  
\begin{equation*}
\Om = \{\bxi = (\hat\bxi, \xi_d)\in\R^d: \xi_d > \Psi(\hat\bxi)\}, 
\end{equation*} 
with a Lipschitz function $\Psi$. 
For our purposes the convenient choice of $\tau(\bxi)$ for all 
$\bxi\in\R^d$ is
\begin{equation}\label{dist_rho:eq}
\tau(\bxi) = 
\dfrac{1}{32\lu M\ru}\bigl((\xi_d-\Psi(\hat\bxi))_+^2 
+ \a^{-2}\bigr)^{\frac{1}{2}},
\end{equation}
with the number $M$ as in \eqref{gradient:eq}. 
Since $|\nabla \tau|\le 1/16$,
the condition \eqref{slow:eq} is satisfied
with $\vark=1/16$. 

In the case $d=1$ we  let 
\begin{equation}\label{dist1:eq}
\tau(\xi) = \frac{1}{32}\bigl(|\xi|^2+\a^{-2}\bigr)^{\frac{1}{2}}.
\end{equation}

\begin{thm} 
Let $\L$ and $\Om$ be basic domains. 
Let $q\in (0, 1]$, n be as in \eqref{n:eq}, and 
let $m$ be as in \eqref{m:eq}.  
Suppose that the symbol $a\in\BS^{(n, m)}$ 
satisfies \eqref{a_support:eq}. 
Assume that $\a\ell\rho\ge 2$. Then  
for $t=0$ or $1$ we have 
\begin{equation}\label{sandwich2:eq}
\|\chi_{\L}\op_{\a, t}(a)P_{\Om, \a} (1-\chi_\L)\|_{\GS_q}\le C_q
\bigl((\a\ell\rho)^{d-1} \log (\a\ell\rho)\bigr)^{\frac{1}{q}} 
 \SN^{(n, m)}(a; \ell, \rho).
\end{equation}
\end{thm}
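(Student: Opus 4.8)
The plan is to combine the two-projection structure with the Hörmander-type partition of unity from Proposition~\ref{hor:prop} adapted to the frequency domain, so as to localise in $\bxi$ to balls on which $\chi_\Om$ either vanishes, or equals $1$, or the corresponding point lies within distance $O(\a^{-1})$ of $\p\Om$. Using the scaling relations \eqref{unitary:eq} and \eqref{scale:eq}, first I would reduce to $\a=\rho=1$ and $\ell\ge\ell_0$, with $\bu=\bmu=\bzero$, and choose coordinates in which $\Om=\{\xi_d>\Psi(\hat\bxi)\}$. Apply Proposition~\ref{hor:prop} with $\tau(\bxi)$ as in \eqref{dist_rho:eq} (now with $\a=1$), getting a covering by balls $B(\bxi_j,\tau(\bxi_j))$ and a subordinate partition $\{\psi_j\}$. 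Then write
\begin{equation*}
\chi_\L\op_{1,t}(a)P_{\Om,1}(I-\chi_\L) = \sum_j \chi_\L\op_{1,t}(a\,\psi_j)P_{\Om,1}(I-\chi_\L),
\end{equation*}
and apply the triangle inequality \eqref{triangle:eq} in $\GS_q$.

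Next I would split the index set $\{j\}$ into three groups, exactly as in the proof of Theorem~\ref{sandwich:thm} but now in the $\bxi$-variable: those $j$ for which $B(\bxi_j,\tau(\bxi_j))\cap\Om=\varnothing$ (the term vanishes); those for which $B(\bxi_j,\tau(\bxi_j))\subset\Om$ (so $\psi_j P_{\Om,1} = \psi_j$ up to an $\op_1$-operator whose symbol is supported away from the relevant frequency band, which is handled by Theorem~\ref{sandwich:thm} or by the smooth estimate together with the commutator remark~\ref{commutator:rem}); and the ``boundary layer'' $j\in\Sigma_{\rm bdry}$, those for which $B(\bxi_j,\tau(\bxi_j))$ meets $\p\Om$. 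For the boundary-layer terms one uses the crude bound coming from \eqref{trace_symbol:eq}, $\|\chi_\L\op_{1,t}(a\psi_j)P_{\Om,1}(I-\chi_\L)\|_{\GS_q}\le\|\op_{1,t}(a\psi_j)\|_{\GS_q}\le C(\a\ell\rho)^{d/q}\SN^{(n,n)}(a\psi_j;\ell,\tau(\bxi_j))$, but with the $\bxi$-scale $\tau(\bxi_j)$ replacing $\rho$ one gains a factor $\tau(\bxi_j)^{d/q}\sim\a^{-d/q}$ per term, so that $\|\cdot\|_{\GS_q}^q\le C\ell^{d-1}\tau(\bxi_j)^d$ after summing off the $\xi_d$-direction; this is the point where the integration-by-parts device from the proof of Theorem~\ref{separate:thm}, using the $m$-th order $\nabla_\bxi$-derivatives of $a$, must be invoked to make the tail in $\xi_d$ summable, just as $qm>d+1$ was used in \eqref{sobr:eq}.

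The crucial feature producing the logarithm in \eqref{sandwich2:eq} is the count of boundary balls together with the spatial sum. On the box $|\hat\bxi|\le C\ell$ the number of balls $B(\bxi_j,\tau(\bxi_j))$ meeting $\p\Om$ with $\tau(\bxi_j)\sim 2^{-k}$ ($1\lesssim 2^{-k}\lesssim 1$, i.e. $2^{-k}$ ranging from $\a^{-1}$ to $\rho$ after unscaling) is of order $\ell^{d-1}2^{k(d-1)}$, and each contributes $\lesssim 2^{-kd}\cdot\ell^{d-1}$ after the $\bx$-sum — wait, rather each boundary ball contributes a term whose $\GS_q^q$-norm, after the spatial summation over the $\bl$-lattice as in \eqref{sobr:eq}, is $\lesssim\ell^{d-1}\tau(\bxi_j)^{d-1}$ times a scale-invariant constant; summing $\ell^{d-1}2^{k(d-1)}$ such terms over the dyadic range $k$ from $0$ to $\log(\a\ell\rho)$ gives $\ell^{d-1}\cdot(\#\text{dyadic scales})\sim\ell^{d-1}\log(\a\ell\rho)$, which upon reinstating the parameters is exactly $(\a\ell\rho)^{d-1}\log(\a\ell\rho)$. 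The main obstacle I anticipate is bookkeeping the ``$B_j\subset\Om$'' group: one must show that replacing $P_{\Om,1}$ by the identity there costs only a controllable error, which requires estimating $\chi_\L\op_{1,t}(a\psi_j)(I-P_{\Om,1})(I-\chi_\L)$ for balls well inside $\Om$ — here $(I-P_{\Om,1})$ has symbol supported where $\xi_d<\Psi(\hat\bxi)$, at distance $\gtrsim\tau(\bxi_j)$ from $\supp\psi_j$ in frequency, and one needs a frequency-separation analogue of Theorem~\ref{separate:thm} (equivalently, integration by parts in $\bx$ exploiting the oscillation) to get decay in that frequency gap; this is the technically delicate step, but it is formally identical to the one-variable argument of Theorem~\ref{sandwich:thm} with the roles of $\bx$ and $\bxi$ interchanged, so it should go through with the same $m$-fold integration by parts, and the resulting error sums to $O(\ell^{d-1})$, negligible against the logarithmic main term.
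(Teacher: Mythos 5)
Your structural plan --- a H\"ormander partition of unity in the $\bxi$-variable at scale $\tau(\bxi)$ comparable to $\dist(\bxi,\p\Om)$ with a floor, splitting the pieces into a boundary layer, interior balls, and exterior balls, and feeding the interior pieces into the one-variable Theorem~\ref{sandwich:thm} --- is the same skeleton the paper uses. But there are three concrete points where the proposal goes wrong.

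\textbf{Scaling.} You reduce to $\a=\rho=1$ and $\ell\ge\ell_0$, but then propose to keep $\tau$ ``as in \eqref{dist_rho:eq} (now with $\a=1$)''. With $\a=1$ that formula gives $\tau(\bxi)\ge(32\lu M\ru)^{-1}$ for every $\bxi$: there are no small balls at all, the covering cannot resolve the boundary layer, and your dyadic range ``from $\a^{-1}$ to $\rho$'' collapses to a single scale. To make your normalisation work you would have to replace $\a^{-1}$ in \eqref{dist_rho:eq} by the scale-invariant combination $(\a\ell\rho)^{-1}$. The paper instead normalises to $\ell=\rho=1$ with $\a\ge 2$, so that \eqref{dist_rho:eq} already produces the correct range of scales.

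\textbf{Where the logarithm comes from.} You attribute the $\log$ to a dyadic count of \emph{boundary} balls at multiple scales $\tau_j\sim 2^{-k}$. But by \eqref{dist_rho:eq} every ball meeting $\p\Om$ has $\tau_j\sim\a^{-1}$ --- a single scale, not a range. There are $O(\a^{d-1})$ of them, each contributing $O(1)$ to the $\GS_q^q$-norm (via \eqref{trace_symbol:eq} with $\rho=\tau_j$, since $\a\tau_j\sim 1$); their total is $O(\a^{d-1})$ with no logarithm. The logarithm arises from the \emph{interior} balls $\Sigma_1$: each such term contributes $\sim(\a\tau_j)^{d-1}$ by Theorem~\ref{sandwich:thm}, and
$\sum_{j\in\Sigma_1}\tau_j^{d-1}\approx\int_{\bxi\in\Om,\ |\bxi|\le2}\tau(\bxi)^{-1}\,d\bxi\approx\log\a$. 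Your dyadic bookkeeping reconstructs the right order of magnitude only because the factors of $\ell^{d-1}$ happen to cancel, but it is applied to the wrong group of balls.

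\textbf{No error for the interior balls.} You flag ``replacing $P_{\Om,1}$ by the identity'' on the interior balls as the technically delicate step, requiring a new frequency-separation analogue of Theorem~\ref{separate:thm}. For $t=0$ there is nothing to control: since $\chi_\Om$ depends only on $\bxi$, the composition $\op_{\a,0}(b)\op_\a(\chi_\Om)=\op_{\a,0}(b\chi_\Om)$ holds exactly, so if $\supp\psi_j\subset\Om$ then $\op_{\a,0}(a\psi_j)P_{\Om,\a}=\op_{\a,0}(a\psi_j)$ identically, and the terms with $\supp\psi_j\cap\Om=\varnothing$ vanish. This exact product rule --- not a new separation estimate --- is what lets the paper pass directly from $T_\a^{(j)}$ to the one-variable bound \eqref{sandwich1:eq}, and it is precisely why the statement is restricted to $t=0,1$ rather than general $t\in[0,1]$.
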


\begin{proof} Suppose that $d\ge 2$. 
Without loss of generality suppose that 
$\SN^{(n, m)}(a; \ell, \rho) = 1$ and $\bmu = \bold0$. 
It suffices to prove the formula \eqref{sandwich2:eq} for $\ell=\rho = 1$ 
and arbitrary $\a\ge 2$. Denote 
\begin{equation*}
T_\a = \chi_{\L}\op_{\a, t}(a)P_{\Om, \a} (1-\chi_\L). 
\end{equation*}
Let $\psi_j, j = 1, 2, \dots$, be a partition of unity 
associated with the function 
\eqref{dist_rho:eq}. 
Let $\tau_j = \tau(\bxi_j)$ be the radii defined in  
Proposition \ref{hor:prop}. 
Then 
\begin{equation}\label{tj:eq}
T_\a = \sum_{j} T^{(j)}_\a,\ 
T^{(j)}_\a = \chi_{\L}\op_{\a, t}(a\psi_j)P_{\Om, \a} (1-\chi_\L).
\end{equation}
Note that $\SN^{(n, m)}(a\psi_j; 1, \tau_j)\le C$ and 
$\a \tau_j\ge (32\lu M\ru)^{-1}$ uniformly in $j$. 
We split the the set of indices $j$ in the sum \eqref{tj:eq} 
into two disjoint parts:
\begin{equation*}
\begin{split}
\Sigma_0 = &\ \{j\in\mathbb N: \supp\psi_j\cap\p\Om \cap
B(\bold0, 1)\not=\varnothing\},\\[0.2cm] 
\Sigma_1 = &\ \{j\in\mathbb N: \chi_\Om\psi_j = \psi_j,\ 
\supp\psi_j \cap B(\bold0, 1)\not = \varnothing\}.
\end{split}
\end{equation*} 
First assume that $j\in \Sigma_0$. By \eqref{dist_rho:eq} 
we have $c\a^{-1}\le \tau_j \le C\a^{-1}$ with some constants $c, C$. 
Thus by \eqref{trace_symbol:eq}, 
\begin{equation*}
\| T^{(j)}_\a\|_{\GS_q}\le 
\|\op_{\a, t}(a\psi_j)\|_{\GS_q}\le C (\a \tau_j)^{\frac{d}{q}}\le \tilde C,
\end{equation*}
uniformly in $j$. 
Since the boundary $\p\Om$ is Lipschitz, 
it is clear that $\#\Sigma_0\le C\a^{d-1}$, and hence by triangle 
inequality \eqref{triangle:eq}, 
\begin{equation}\label{sigma0:eq}
\biggl\|\sum_{j\in\Sigma_0} T^{(j)}_\a\biggr\|_{\GS_q}^q
\le \sum_{j\in\Sigma_0} \bigl\| T^{(j)}_\a \bigl\|_{\GS_q}^q\le C\a^{d-1}. 
\end{equation}
Let us turn to the remaining indices, i.e. to $j\in \Sigma_1$. 
By definition of $\Sigma_1$ we have  
$T^{(j)}_\a = ~\chi_{\L}\op_{\a, t}(a\psi_j)(I-\chi_\L)$, 
$j\in\Sigma_1$, and hence by Theorem \ref{sandwich:thm},
\begin{equation*}
\| T^{(j)}_\a\|_{\GS_q}\le C (\a \tau_j)^{\frac{d-1}{q}}, \ j\in\Sigma_1.
\end{equation*}
Let us sum up all the contributions using the 
triangle inequality \eqref{triangle:eq}:
\begin{align}
\biggl\|\sum_{j\in \Sigma_1}T^{(j)}_\a\biggr\|_{\GS_q}^q
\le &\ \sum_{j\in\Sigma_1} 
\| T^{(j)}_\a\|_{\GS_q}^q
\le C_q \a^{d-1} \sum_{j: |\bxi_j|< 2} \tau_j^{d-1}\notag\\[0.2cm]
\le &\ \tilde C_q \a^{d-1} \underset{\bxi\in\Om,\ 
|\bxi|\le 2}\int \tau(\bxi)^{-1} d\bxi. \label{log:eq}
\end{align}
Here we have used the finite intersection property stated 
in Proposition \ref{hor:prop} and the bounds
\begin{equation*}
(1+\vark)^{-1} \tau(\bxi) \le \tau(\bxi_j)\le (1-\vark)^{-1} \tau(\bxi),\ 
\bxi\in  B\bigl(\bxi_j, \tau(\bxi_j)\bigr).
\end{equation*} 
The integral on the right-hand side of \eqref{log:eq} 
does not exceed 
\begin{equation*}
C\underset{|\hat\bxi|\le 2}\int
\ \ \underset{\substack{\xi_d > \Psi(\hat\bxi),\\ 
|\xi_d|\le 2}}\int 
\frac{1}{\sqrt{\a^{-2} + (\xi_d-\Psi(\hat\bxi))^2}}d\xi_d d\hat\bxi
\le 
C'\int_0^4 \frac{1}{\sqrt{t^2+\a^{-2}}} dt\le C''\log(\a + 1).
\end{equation*} 
Together with \eqref{sigma0:eq} this leads to 
\begin{equation*}
\|T_\a\|_{\GS_q}^q\le C\a^{d-1}\log\a,
\end{equation*}
which implies \eqref{sandwich2:eq}. 

For $d = 1$ the proof follows the same line argument 
and is somewhat simpler. 
We omit the details. 
\end{proof}

Just as before, using an appropriate partition of unity one can deduce 
the following.

\begin{cor}\label{sandwich2:cor}
Let $\L$ and $\Om$ be bounded Lipschitz domains domains (for $d\ge 2$) 
or open bounded intervals (for $d=1$). Let $q\in (0, 1]$. 
Then for any $\a \ge  2$, 
\begin{equation}\label{pred:eq}
\|\chi_{\L}P_{\Om, \a} (1-\chi_\L)\|_{\GS_q}\le C_q
\bigl(\a^{d-1} \log \a\bigr)^{\frac{1}{q}}.
\end{equation}
The constant $C_q$ may depend on the domains $\L, \Om$.
\end{cor}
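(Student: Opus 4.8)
The plan is to reduce the estimate \eqref{pred:eq} to the basic-domain bound \eqref{sandwich2:eq}, in the same spirit in which Corollary \ref{sandwich_gen:cor} reduces the one-discontinuity estimate to Theorem \ref{sandwich:thm}. Two mismatches must be removed: \eqref{sandwich2:eq} carries a smooth, compactly supported symbol between $\chi_\L$ and $P_{\Om,\a}$, whereas \eqref{pred:eq} carries none, and \eqref{sandwich2:eq} is stated for basic domains whereas here $\L,\Om$ are only bounded Lipschitz. Accordingly I would first rewrite $\chi_\L P_{\Om,\a}(1-\chi_\L)$ with such a symbol inserted, then localise both $\L$ and $\Om$ by partitions of unity so that each resulting piece involves only a basic domain, and finally invoke \eqref{sandwich2:eq} and sum. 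Throughout I take $t=0$; the case $d=1$ is identical, with half-lines in place of basic domains.

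To insert a symbol I would exploit that $\L$ and $\Om$ are bounded: cover $\overline\L$ and $\overline\Om$ by finitely many balls $B(\bz_i,r)$, $i=1,\dots,I$, and $B(\boldzeta_k,\rho')$, $k=1,\dots,K$, on which $\L$ (resp.\ $\Om$) coincides, inside the quadrupled ball, with a basic domain $\L_i$ (resp.\ $\Om_k$), and take subordinate smooth, compactly supported partitions of unity $\{\phi_i\}$, $\{\eta_k\}$. Put $a(\bx,\bxi)=\bigl(\sum_i\phi_i(\bx)\bigr)\bigl(\sum_k\eta_k(\bxi)\bigr)$. Using that, for $t=0$, multiplication by a function of $\bx$ is left quantisation, and that composing a left-quantised operator with the Fourier multiplier $P_{\Om,\a}=\op_\a(\chi_\Om)$ multiplies the symbols, one obtains $\op_{\a,0}(a)P_{\Om,\a}=\op_{\a,0}\bigl(\sum_i\phi_i(\bx)\chi_\Om(\bxi)\bigr)=\bigl(\sum_i\phi_i\bigr)P_{\Om,\a}$, since $\sum_k\eta_k\equiv1$ on $\overline\Om\supset\supp\chi_\Om$; and since $\sum_i\phi_i\equiv1$ on $\overline\L$ this yields
\[
\chi_\L P_{\Om,\a}(1-\chi_\L)=\chi_\L\,\op_{\a,0}(a)\,P_{\Om,\a}(1-\chi_\L).
\]
Now decompose $a=\sum_{i,k}\phi_i(\bx)\eta_k(\bxi)$; commuting each $\eta_k$ into the symbol ($\op_{\a,0}(b)\op_\a(\eta_k)=\op_{\a,0}(b\eta_k)$), replacing the outer $\chi_\L$ by $\chi_{\L_i}$ via $\chi_\L\phi_i=\chi_{\L_i}\phi_i$, and using $\eta_k\chi_\Om=\eta_k\chi_{\Om_k}$, I arrive at the finite sum
\[
\chi_\L P_{\Om,\a}(1-\chi_\L)=\sum_{i,k}\chi_{\L_i}\,\op_{\a,0}\bigl(\phi_i(\bx)\eta_k(\bxi)\bigr)\,P_{\Om_k,\a}\,(1-\chi_\L),
\]
each amplitude $\phi_i\otimes\eta_k$ being smooth and supported in $B(\bz_i,r)\times B(\boldzeta_k,\rho')$, hence satisfying \eqref{a_support:eq} with fixed parameters and with $\SN^{(n,m)}\le C$.

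For each term I would replace the remaining $1-\chi_\L$ by $1-\chi_{\L_i}$ exactly as in Corollary \ref{sandwich_gen:cor}: split $1-\chi_\L=h(1-\chi_\L)+(1-h)(1-\chi_\L)$ with $h(\bx)=\z(|\bx-\bz_i|(4r)^{-1})$, where $\z$ is the function from \eqref{zeta:eq}; on $\supp(1-h)$ one has $\chi_\L=\chi_{\L_i}$, while on the $h$-terms the input is at distance $\ge r$ from $\supp\phi_i$, and since $\op_{\a,0}(\phi_i\eta_k)P_{\Om_k,\a}=\op_{\a,0}\bigl(\phi_i(\bx)\eta_k(\bxi)\chi_{\Om_k}(\bxi)\bigr)$ still has its $\bx$-support inside $B(\bz_i,r)$, Theorem \ref{separate:thm} bounds these terms by $C(\a r\rho')^{\frac{d}{q}-m}\le C$ (here $mq>d+1$, see \eqref{m:eq}). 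The remaining pieces $\chi_{\L_i}\op_{\a,0}(\phi_i\eta_k)P_{\Om_k,\a}(1-\chi_{\L_i})$ involve basic domains $\L_i,\Om_k$, so \eqref{sandwich2:eq} applies and, with the fixed scales $r,\rho'$ absorbed into the constant, gives $C_q(\a^{d-1}\log\a)^{1/q}$. Summing the finitely many contributions with the quasi-triangle inequality \eqref{triangle:eq} produces \eqref{pred:eq}, the constant depending on $\L$ and $\Om$. (The constraint $\a\ell\rho\ge2$ required by \eqref{sandwich2:eq} holds once $\a$ exceeds a threshold fixed by the covering radii; the residual bounded range of $\a$ is handled by a routine separate argument, which I omit.) I expect the frequency localisation to be the delicate point: unlike $\phi_i$, which passes cleanly through the left-quantised operator, the cutoffs $\eta_k$ must be merged with $P_{\Om,\a}$, and one has to verify both that $\eta_k\chi_\Om=\eta_k\chi_{\Om_k}$, so that the piece is genuinely attached to the basic domain $\Om_k$, and that in the subsequent replacement of $\chi_\L$ the middle factor $P_{\Om_k,\a}$ can be absorbed into the amplitude without destroying the support separation on which Theorem \ref{separate:thm} relies.
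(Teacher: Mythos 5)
Your overall strategy --- localise both $\L$ and $\Om$, insert a smooth symbol via partitions of unity, and reduce to the basic-domain estimate \eqref{sandwich2:eq} --- mirrors the paper's. But there is a genuine gap in how you control the separated piece. You claim that after absorbing the projection, $\op_{\a,0}(\phi_i\eta_k)P_{\Om_k,\a}=\op_{\a,0}\bigl(\phi_i(\bx)\eta_k(\bxi)\chi_{\Om_k}(\bxi)\bigr)$, Theorem \ref{separate:thm} bounds the $h$-term by $C(\a r\rho')^{d/q-m}$. The absorbed symbol, however, is \emph{not} in $\BS^{(n,m)}$: the factor $\chi_{\Om_k}$ has a jump, so the $\bxi$-derivatives of order up to $m$ entering $\SN^{(n,m)}$ are not bounded (indeed, do not exist). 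This is not merely technical. Since $\eta_k\chi_{\Om_k}$ is discontinuous, the kernel of $\op_{\a,0}(\phi_i\eta_k\chi_{\Om_k})$ decays in $|\bx-\by|$ only like $|\bx-\by|^{-1}$, far too slowly for the negative power $d/q-m$ with $mq>d+1$ that you need, so the claimed bound is false as stated. You flag a concern about this very point at the end, but you misdiagnose the obstruction as one of support separation; it is actually the loss of smoothness in $\bxi$.

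The paper's fix is to commute the projection past the smooth symbol \emph{before} invoking Theorem \ref{separate:thm}. One writes $\op_{\a,0}(b)P_{\Om_0,\a}=P_{\Om_0,\a}\op_{\a,0}(b)-[P_{\Om_0,\a},\op_{\a,0}(b)]$, and by Remark \ref{commutator:rem} (a consequence of \eqref{sandwich_om:eq}) the commutator contributes a harmless $C\a^{(d-1)/q}$ to the $\GS_q$-quasi-norm. In the remaining term $\chi_{\L_0}P_{\Om_0,\a}\op_{\a,0}(b)h(I-\chi_\L)$ the projection now sits on the outside and is discarded using only $\|P_{\Om_0,\a}\|\le 1$; Theorem \ref{separate:thm} is then applied to $\op_{\a,0}(b)h$ with the genuinely smooth symbol $b=\phi\psi$. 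Reversing the commutation afterwards returns one to $\chi_{\L_0}\op_{\a,0}(b)P_{\Om_0,\a}(I-\chi_{\L_0})$, which \eqref{sandwich2:eq} handles. Inserting this commutator step into your argument, in place of the symbol absorption, closes the gap.
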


\begin{proof} 
The proof is similar to that 
of Corollary \ref{sandwich_gen:cor}. 
Cover $\overline\L$ with 
finitely open balls $B(\bz_j, r)$, 
$j = 1, 2, \dots, J$ 
where $r$ is chosen in such a way that  for each $j$, 
$B(\bz_j, 4r)\cap \L = B(\bz_j, 4r)\cap \L_0$ with some basic 
  domain $\L_0 = \L_0(j)$.  
Let $\{B(\bmu_k, r)\}, k = 1, 2, \dots, K$ be a covering of $\overline\Om$ 
with the same properties.   
Let $\{\phi_k\}$ and  $\{\psi_j\}$ 
be finite partitions of unity subordinate to the above coverings. 
Due to the triangle inequality \eqref{triangle:eq} it suffices to obtain 
the bound \eqref{pred:eq} for the operators of the form 
\begin{equation*}
T_\a = \chi_{\L}\op_{\a, 0}(b)P_{\Om, \a} (1-\chi_\L),
\end{equation*}
where $b(\bx, \bxi) = \phi(\bx) \psi(\bxi)$, and $\phi$, $\psi$ 
are elements of the partitions above supported in the balls 
$B(\bz, r)$ and $B(\bmu, r)$. We omit the indices $j, k$ for brevity. 
  If $\L$ and $\Om$ had 
been basic domains then the required bound would have followed 
from \eqref{sandwich2:eq}. Let $\L_0$ and $\Om_0$ be basic 
domains such that 
\begin{equation}\label{local:eq}
B(\bz, 4r)\cap \L = B(\bz, 4r)\cap \L_0, \  
B(\bmu, 4r)\cap \Om = B(\bmu, 4r)\cap \Om_0.
\end{equation}
By construction,
\begin{equation*}
T_\a  = \chi_{\L_0}\op_{\a, 0}(b)P_{\Om_0, \a} (I-\chi_\L). 
\end{equation*}
Now we show that the estimate \eqref{pred:eq} is preserved if 
one replaces $\L$ with $\L_0$ in the last bracket. 
By \eqref{sandwich_om:eq}, 
\begin{align}\label{perv:eq}
\|T_\a\|_{\GS_q}^q\le &\ \|[P_{\Om_0, \a}, \op_{\a, 0}(b)]\|_{\GS_q}^q
+ \|\chi_{\L_0}P_{\Om_0, \a} \op_{\a, 0}(b)(I-\chi_\L)\|_{\GS_q}^q\notag\\[0.2cm]
\le &\ C\a^{d-1} + \|\chi_{\L_0}P_{\Om_0, \a} \op_{\a, 0}(b)(I-\chi_\L)\|_{\GS_q}^q.
\end{align}
In order to estimate the last term on the right-hand side let 
$\z\in\plainC\infty(\R^d)$ be as defined in \eqref{zeta:eq}, and let
$h(\bx) = \z\bigl((|\bx-\bz|(4r)^{-1})\bigr)$, $\tilde h = 1- h$. 
Observe that the distance between the supports of $\phi$ and $h$ is at least $r$. 
Thus by Theorem 
\ref{separate:thm}, for any   $m\ge [dq^{-1}] + 1$ we have 
\begin{align*}
\|\chi_{\L_0}P_{\Om_0, \a} \op_{\a, 0}(b)(I-\chi_\L)\|_{\GS_q}^q
\le &\ \|\op_{\a, 0}(b) h\|_{\GS_q}^q
+\ \|\chi_{\L_0}P_{\Om_0, \a} \op_{\a, 0}(b)\tilde h(I-\chi_\L)\|_{\GS_q}^q\\[0.2cm]
\le &\ C_m(\a r)^{d - mq} 
+ \|\chi_{\L_0}P_{\Om_0, \a} \op_{\a, 0}(b)\tilde h(I-\chi_{\L_0})\|_{\GS_q}^q.
\end{align*}
Here we have used \eqref{local:eq}. 
Reversing the argument for the last term on the right-hand side 
we arrive at the bound 
\begin{equation*}
\|T_\a\|_{\GS_q}^q\le 
C \a^{d-1} + \|\chi_{\L_0} \op_{\a, 0}(b)P_{\Om_0, \a} (I-\chi_{\L_0})\|_{\GS_q}^q.
\end{equation*}
Both domains $\L_0, \Om_0$ are basic, and hence we can use \eqref{sandwich2:eq} 
for the the right-hand side. 
As explained earlier, this leads to \eqref{pred:eq}.  
\end{proof}
   

\bibliographystyle{amsplain}

\end{document}